\numberwithin{equation}{section}
\numberwithin{figure}{section}
\theoremstyle{plain}
\newtheorem{thm}{\protect\theoremname}[section]
\theoremstyle{plain}
\newtheorem{conjecture}[thm]{\protect\conjecturename}
\theoremstyle{plain}
\newtheorem{cor}[thm]{\protect\corollaryname}
\theoremstyle{remark}
\newtheorem{rem}[thm]{\protect\remarkname}
\theoremstyle{plain}
\newtheorem{lem}[thm]{\protect\lemmaname}
\theoremstyle{plain}
\newtheorem{prop}[thm]{\protect\propositionname}
\def\makebbb#1{
    \expandafter\gdef\csname#1\endcsname{
        \ensuremath{\Bbb{#1}}}
}\makebbb{R}\makebbb{N}\makebbb{Z}\makebbb{C}\makebbb{H}\makebbb{E}\makebbb{H}\makebbb{P}\makebbb{B}\makebbb{Q}\makebbb{E}\makebbb{E}
\providecommand{\conjecturename}{Conjecture}
\providecommand{\corollaryname}{Corollary}
\providecommand{\lemmaname}{Lemma}
\providecommand{\propositionname}{Proposition}
\providecommand{\remarkname}{Remark}
\providecommand{\theoremname}{Theorem}
\begin{document}
\title{Sharp bounds on the height of K-semistable Fano varieties II, the
log case }
\author{Robert J. Berman, Rolf Andreasson}
\begin{abstract}
In our previous work we conjectured - inspired by an algebro-geometric
result of Fujita - that the height of an arithmetic Fano variety $\mathcal{X}$
of relative dimension $n$ is maximal when $\mathcal{X}$ is the projective
space $\P_{\Z}^{n}$ over the integers, endowed with the Fubini-Study
metric, if the corresponding complex Fano variety is K-semistable.
In this work the conjecture is settled for diagonal hypersurfaces
in $\P_{\Z}^{n+1}.$ The proof is based on a logarithmic extension
of our previous conjecture, of independent interest, which is established
for toric log Fano varieties of relative dimension at most three,
hyperplane arrangements on $\P_{\Z}^{n},$ as well as for general
arithmetic orbifold Fano surfaces. 
\end{abstract}

\maketitle

\section{Introduction}

This is a sequel to \cite{a-b}, where a conjectural arithmetic analog
of Fujita's sharp bound for the degree (volume) of K-semistable Fano
varieties over $\C$ was proposed, concerning arithmetic Fano varieties
$\mathcal{X}.$ The case when $\mathcal{X}$ is the canonical integral
model of a toric Fano variety $X$ was settled in \cite{a-b}, when
the relative dimension $n$ is at most six (the extension to any $n$
is conditioned on a conjectural gap hypothesis for the algebro-geometric
degree). Here we will, in particular, show that the conjecture introduced
in \cite{a-b} holds for any diagonal Fano hypersurface $\mathcal{X}$
in $\P_{\Z}^{n+1}$ (see Section \ref{subsec:Application-to-diagonal}
below). The proof is based on the following extension of the conjecture
in \cite{a-b} to the logarithmic setting, which is the main focus
of the present work:
\begin{conjecture}
\label{conj:height log intro}Let $(\mathcal{X},\mathcal{D})$ be
an arithmetic log Fano variety. Then the following inequality of arithmetic
intersection numbers holds for any volume-normalized continuous metric
on $-(K_{X}+\Delta)$ with positive curvature current if $(X,\Delta)$
is K-semistable:
\[
(\overline{-\mathcal{K}_{(\mathcal{X},\mathcal{D})}})^{n+1}\leq(\overline{-\mathcal{K}_{\P_{\Z}^{n}}})^{n+1},
\]
where $-K_{\P_{\C}^{n}}$ is endowed with the volume-normalized Fubini-Study
metric. Moreover, if $\mathcal{X}$ is normal equality holds if and
only if $(\mathcal{X},\mathcal{D})=(\P_{\Z}^{n},0)$ and the metric
is Kähler-Einstein, i.e. coincides with the Fubini-Study metric up
to the action of an automorphism of $\P_{\C}^{n}.$
\end{conjecture}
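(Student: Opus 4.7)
The plan mirrors the strategy employed for the non-logarithmic conjecture in \cite{a-b}: decompose the arithmetic self-intersection into an Archimedean (analytic) contribution governed by the metric $\phi$ and a non-Archimedean (geometric) contribution coming from the integral model $(\mathcal{X},\mathcal{D})$, then bound each piece sharply. Schematically one aims for
\[
(\overline{-\mathcal{K}_{(\mathcal{X},\mathcal{D})}})^{n+1} \;=\; \mathrm{Arch}(\phi)\cdot(-(K_{X}+\Delta))^{n} \;+\; \mathrm{Fin}(\mathcal{X},\mathcal{D}),
\]
where $\mathrm{Arch}(\phi)$ is an Aubin--Mabuchi-type energy of the volume-normalized metric and $\mathrm{Fin}$ aggregates local intersection numbers at the finite primes.

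For the algebraic factor I would invoke Fujita's sharp volume bound in the log K-semistable setting, $(-(K_{X}+\Delta))^{n}\le (n+1)^{n}$, with equality characterising $(\P^{n},0)$; this controls the scale of the Archimedean contribution. For $\mathrm{Arch}(\phi)$ itself, the key is that after volume normalization a Ding- or Mabuchi-type functional is bounded above, in the K-semistable case, by its value at the K\"ahler-Einstein reference on $(\P^{n},0)$ endowed with Fubini-Study. Such a bound should be established by pairing $\phi$ against a geodesic ray associated to a test configuration degenerating $(X,\Delta)$ toward $(\P^{n},0)$ and using K-semistability in its analytic reformulation as non-negativity of the Ding slope. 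For the non-Archimedean contribution, one compares $\mathrm{Fin}(\mathcal{X},\mathcal{D})$ to the corresponding quantity for $(\P_{\Z}^{n},0)$ place by place, absorbing any excess at bad primes into the slack produced by Fujita's inequality.

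The equality analysis then cascades: equality in Fujita's bound forces $(X,\Delta)\cong(\P^{n},0)$, and $\phi$ must saturate the Archimedean estimate, which in turn identifies it with the unique (up to $\mathrm{Aut}(\P_{\C}^{n})$) K\"ahler-Einstein metric on $\P^{n}$, namely the Fubini-Study metric. Normality of $\mathcal{X}$ is used here to rule out pathological integral degenerations that would preserve the arithmetic intersection number without forcing $\mathcal{X}=\P_{\Z}^{n}$.

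The main obstacle is the sharp Archimedean estimate for general singular log pairs. In the cases actually proven in this paper --- toric log Fano varieties of relative dimension at most three, hyperplane arrangements on $\P_{\Z}^{n}$, and orbifold Fano surfaces --- one can either reduce to an explicit convex optimization over a moment polytope or exploit the well-developed two-dimensional K\"ahler-Einstein theory. For arbitrary high-dimensional K-semistable log pairs, a sufficiently \emph{quantitative} form of K-semistability, strong enough to compare Ding energies across distinct pairs through a common degeneration, is not yet available; overcoming this deficit, rather than the comparatively routine non-Archimedean bookkeeping, is where the bulk of the difficulty lies.
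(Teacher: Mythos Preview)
The statement is a conjecture; the paper does not prove it in general but only in the special cases recorded in Theorems~\ref{thm:main log toric intro}, \ref{thm:hyperplane intro}, \ref{thm:diagonal hypersurface intro} and \ref{thm:Mab on log surface}. So any ``proof proposal'' must really be a strategy to be measured against those partial proofs.

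Your outline contains a genuine gap at its central step. You propose to bound the Archimedean term by ``pairing $\phi$ against a geodesic ray associated to a test configuration degenerating $(X,\Delta)$ toward $(\P^{n},0)$.'' No such test configuration exists: a test configuration is a $\C^{*}$-equivariant degeneration of a \emph{fixed} polarized variety, and an arbitrary K-semistable log Fano pair $(X,\Delta)$ simply does not degenerate to $(\P^{n},0)$. K-semistability controls Ding slopes along degenerations of $(X,\Delta)$ itself; it says nothing about comparing the Ding energy on $(X,\Delta)$ to the Ding energy on a different variety. Consequently the sentence ``after volume normalization a Ding- or Mabuchi-type functional is bounded above \dots\ by its value at the K\"ahler--Einstein reference on $(\P^{n},0)$'' has no mechanism behind it.

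The paper's route in the cases it handles is structurally different from your Arch/Fin split. In the toric case the height is bounded by the universal inequality~\eqref{eq:ineq in pf thm log toric intro}, a purely convex-geometric estimate of the form $\tfrac{1}{2}\mathrm{vol}(X,\Delta)\log\bigl((2\pi^{2})^{n}/\mathrm{vol}(X,\Delta)\bigr)$, and then the comparison with $\P^{n}_{\Z}$ is achieved not through Fujita's bound $(-(K_{X}+\Delta))^{n}\le(n+1)^{n}$ but through the sharper \emph{gap hypothesis}~\eqref{eq:log gap intro}, namely $\mathrm{vol}(X,\Delta)\le\mathrm{vol}(\P^{n-1}\times\P^{1})$ whenever $X\neq\P^{n}$. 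Fujita's inequality alone is too weak here: the right-hand side of~\eqref{eq:ineq in pf thm log toric intro} evaluated at $\mathrm{vol}=(n+1)^{n}/n!$ exceeds the Fubini--Study height, so one genuinely needs the gap. The hyperplane-arrangement and diagonal-hypersurface cases are reduced to the toric one by convexity and covering arguments, not by any cross-variety Ding comparison. Finally, for surfaces the non-Archimedean part is not handled by ``absorbing excess at bad primes into Fujita's slack'' but by an Odaka-type minimization (Lemma~\ref{lem:Odaka log alpha }) showing that the canonical model $(\P^{1}_{\Z},\mathcal{D}_{c})$ minimizes the arithmetic log Mabuchi functional over all integral models, via log canonical thresholds. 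Your diagnosis that the Archimedean side is the main obstruction is fair, but the obstacle is not a missing ``quantitative K-semistability across pairs''; it is the absence, beyond the toric/low-dimensional setting, of both the universal volume-dependent height bound and the gap hypothesis.
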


By definition, an \emph{arithmetic log Fano variety} $(\mathcal{X},\mathcal{D})$
is a projective flat scheme $\mathcal{X}$ over $\Z$ together with
an effective $\Q-$divisor $\mathcal{D}$ on $\mathcal{X}$ such that
\[
-\mathcal{K}_{(\mathcal{X},\mathcal{D})}:=-(\mathcal{K}_{\mathcal{X}}+\mathcal{D})
\]
 defines a relatively ample $\Q-$line bundle, where $\mathcal{K}_{\mathcal{X}}$
denotes the relative canonical divisor on $\mathcal{X}.$ We also
assume that the corresponding complex variety $X$ is normal and thus
defines a complex Fano variety. Following standard procedure we denote
by $\overline{\mathcal{L}}$ a metrized line bundle, i.e. a line bundle
$\mathcal{L}$ on $\mathcal{X}$ endowed with an Hermitian metric
over the complex points $X$ of $\mathcal{X}.$ Arithmetic intersection
numbers of metrized line bundles were introduced by Gillet-Soulé in
the context of Arakelov geometry \cite{b-g-s}. The top arithmetic
intersection number of $\overline{\mathcal{L}}$ is called the\emph{
height} of $\overline{\mathcal{L}}.$ The height of $\overline{-\mathcal{K}_{\P_{\Z}^{n}}}$
with respect to the volume-normalized Fubini-Study metric, appearing
in the previous conjecture, is explicitly given by the following formula
\cite[Lemma 3.6]{ab}, which, essentially, goes back to \cite[ §5.4]{g-s}:
\begin{equation}
(\overline{-\mathcal{K}_{\P_{\Z}^{n}}})^{n+1}=\frac{1}{2}(n+1)^{n+1}\left((n+1)\sum_{k=1}^{n}k^{-1}-n+\log(\frac{\pi^{n}}{n!})\right).\label{eq:expl formul on p n}
\end{equation}
As for the notion of K-stability it originally appeared in the context
of the Yau-Tian-Donaldson conjecture for Fano manifolds $X$ (see
the survey \cite{x} for recent developments, including connections
to moduli spaces and the minimal model program in birational geometry).
By \cite{li1} and \cite[Thm 1.6]{l-x-z} a log Fano variety $(X,\Delta)$
over $\C$ is\emph{ K-polystable} (which is a slightly stronger condition
than K-semistability) if and only if it admits a \emph{log Kähler-Einstein
metric,} i.e. a locally bounded metric on $X$ whose curvature current
$\omega$ induces a Kähler metric with constant Ricci curvature on
the complement of $\Delta$ in the regular locus of $X.$ After volume-normalization
such a metric maximizes the height $(\overline{-\mathcal{K}_{(\mathcal{X},\mathcal{D})}})^{n+1}$
among all volume-normalized locally bounded metrics on $-(K_{X}+\Delta)$
with positive curvature (as shown precisely as in the case that $\mathcal{D}=0$
considered in \cite[Section 2.3]{a-b}).

The K-semistability of $(X,\Delta)$ implies that $(X,\Delta)$ is
Kawamata Log Terminal (klt) in the usual sense of birational geometry
(see Remark \ref{rem:klt}). An important class of klt log Fano varieties
$(X,\Delta)$ is provided by (smooth)\emph{ Fano orbifolds}, where
the coefficients of $\Delta$ are of the form $(1-1/m_{i})$ for positive
integers $m_{i}.$ Diophantine aspects of Fano orbifolds have recently
been explored in a number of works, building on Campana's program
\cite{ca} and its developments by Abramovich \cite{ab} (see \cite{tani}
for a very recent survey). In particular, a logarithmic generalization
of the Manin-Peyre conjecture for the density of rational points of
bounded height on Fano varieties is proposed in \cite{p-}, which,
for example, is addressed for log Fano hyperplane arrangements and
toric varieties in \cite{b-y} and \cite{p-s}, respectively. See
\cite{ber} for relations between height bounds, K-stability and the
Manin-Peyre conjecture. 

\subsection{Main results}

\subsubsection{Toric log Fano varieties}

We first consider the case when $(\mathcal{X},\mathcal{D})$ is the
canonical integral model of a toric log Fano variety $(X,\Delta)$
(see \cite[Section 2]{ma} and \cite[Def 3.5.6]{b-g-p-s}). One advantage
of the logarithmic setup is that on any given toric Fano variety $X$
there exist an infinite number of toric $\Q-$divisors $D$ such that
$-(K_{X}+\Delta)$ is a K-semistable log Fano variety. Building on
\cite{ab}, where the case when $D=0$ was considered, we show 
\begin{thm}
\label{thm:main log toric intro}Let $(\mathcal{X},\mathcal{D})$
be the canonical integral model of a K-semistable toric log Fano variety
$(X,\Delta).$ Conjecture \ref{conj:height log intro} holds for $(\mathcal{X},\mathcal{D})$
under anyone of the following conditions:
\begin{itemize}
\item $n\leq3$ and $X$ is $\Q-$factorial (equivalently, $X$ has at worst
abelian quotient singularities) 
\item $X$ is not Gorenstein or has some abelian quotient singularity
\end{itemize}
\end{thm}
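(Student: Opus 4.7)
The plan is to extend the toric framework developed in \cite{a-b} for the absolute case ($\mathcal{D}=0$) to the logarithmic setting. The approach has three ingredients: an explicit toric formula for the height, the toric characterization of K-semistability via the barycenter of the moment polytope, and a convex-geometric comparison controlling the height by its value on the standard simplex.

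First, for the canonical integral model $(\mathcal{X},\mathcal{D})$ of a toric log Fano pair $(X,\Delta)$ endowed with the volume-normalized toric-invariant metric on $-(K_{X}+\Delta)$, I would apply the toric arithmetic intersection formulas of Burgos--Philippon--Sombra (suitably extended to the logarithmic setting, as in \cite{a-b}) to obtain a decomposition
\[
(\overline{-\mathcal{K}_{(\mathcal{X},\mathcal{D})}})^{n+1}\;=\;A\cdot\mathrm{vol}(P)+\mathcal{H}(P),
\]
where $P=P_{-(K_{X}+\Delta)}\subset\R^{n}$ is the moment polytope of $(X,\Delta)$, $A$ is an explicit constant from the Fubini--Study normalization, and $\mathcal{H}(P)$ is an archimedean entropy-type integral depending on $P$ through the Legendre dual of the toric K\"ahler potential. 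This identifies the inequality of Conjecture \ref{conj:height log intro} with a convex-geometric inequality whose right-hand side corresponds to the moment polytope $P_{0}$ of $(\P_{\Z}^{n},0)$, i.e.\ a rescaled standard simplex.

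Second, by the toric K-semistability criterion, $(X,\Delta)$ is K-semistable if and only if the barycenter of $P$ lies at the origin (after the standard centering normalization of the fan). Under this constraint one must bound $A\cdot\mathrm{vol}(P)+\mathcal{H}(P)$ over the class of centered moment polytopes arising from toric log Fano pairs, with equality only for $P=P_{0}$.

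Third, the two cases of the theorem correspond to regimes in which this polytope inequality can be established unconditionally. In Case (b) ($n\leq 3$ with $X$ being $\Q$-factorial, equivalently the fan simplicial, so $X$ has abelian quotient singularities), one uses the classification of simplicial Fano polytopes in low dimensions together with the continuous family of admissible boundary coefficients subject to the barycenter constraint, reducing the inequality to a finite-dimensional verification that can be carried out by explicit polytope analysis. In Case (a), where $X$ is non-Gorenstein or has an abelian quotient singularity, the combinatorics of the fan force the defining inequalities of $P$ to have denominators bounded below, producing a quantitative deficit in $\mathrm{vol}(P)$ relative to the reflexive unimodular simplex of $\P_{\Z}^{n}$; one then shows that this algebraic gap dominates any possible excess from the archimedean entropy term, in the spirit of the gap arguments of \cite{a-b}.

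The main obstacle is controlling $\mathcal{H}(P)$, which depends non-linearly on $P$ and is not determined by $\mathrm{vol}(P)$ and the barycenter alone. The crucial new technical input is a sharpened entropy estimate in the log setting, establishing that $\mathcal{H}(P)-\mathcal{H}(P_{0})$ is dominated by the deficit $\mathrm{vol}(P_{0})-\mathrm{vol}(P)$. Handling the continuous moduli of divisors $\Delta$ on a fixed $X$---a new feature compared with the absolute case treated in \cite{a-b}---is kept tractable by the barycenter condition, which cuts the parameter space down to a controllable (affine) slice. The equality characterization then follows from the equality cases of the algebraic and archimedean estimates, forcing $P=P_{0}$ and the metric to be K\"ahler--Einstein.
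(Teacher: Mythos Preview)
Your proposal captures the right ambient ideas (toric height formula, barycenter criterion, comparison with the simplex) but misses the actual architecture of the argument, and the piece you flag as ``the crucial new technical input'' --- an entropy comparison $\mathcal{H}(P)-\mathcal{H}(P_{0})$ dominated by the volume deficit --- is not what is proved and is not what is needed.

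The paper's proof does \emph{not} work with a decomposition $A\cdot\mathrm{vol}(P)+\mathcal{H}(P)$ and then compare entropy terms. Instead, the key point (carried over verbatim from \cite{a-b}) is the \emph{universal upper bound}
\[
\frac{(\overline{-\mathcal{K}_{(\mathcal{X},\mathcal{D})}})^{n+1}}{(n+1)!}\;\le\;\frac{1}{2}\,\mathrm{vol}(X,\Delta)\,\log\!\left(\frac{(2\pi^{2})^{n}}{\mathrm{vol}(X,\Delta)}\right),
\]
which depends on $(X,\Delta)$ \emph{only through the single number} $\mathrm{vol}(X,\Delta)$. Once you have this, the analytic/entropy content is gone; the problem is reduced to the purely algebro-geometric ``logarithmic gap hypothesis'' $\mathrm{vol}(X,\Delta)\le\mathrm{vol}(\P^{n-1}\times\P^{1})$ whenever $X\neq\P^{n}$, combined with the monotonicity of $V\mapsto V\log((2\pi^{2})^{n}/V)$ on the relevant range. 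You do not articulate this reduction, and your proposed entropy-versus-volume trade-off is both vaguer and harder than what is actually required.

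Two further points your outline misses. First, the case $X=\P^{n}$ is handled separately and differently: by Fujita's classification any K-semistable toric $\Delta$ on $\P^{n}$ is $(1-t)D_{0}$, and one checks directly (Lemma~\ref{lem:scaled polytope}) that the canonical height is monotone in $t$. Second, for $X\neq\P^{n}$ nonsingular with $n\le 3$, the genuinely new step is not a generic ``finite-dimensional verification'' but the introduction of the invariant
\[
S(X)=\sup_{\Delta}\{\mathrm{vol}(X,\Delta):\ (X,\Delta)\ \text{K-semistable log Fano}\},
\]
and its explicit computation via the convex-geometric lemma that the maximal-volume subset of $P$ with barycenter on a hyperplane is obtained by a single half-space cut. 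This is how the infinite family of admissible $\Delta$ on a fixed $X$ is controlled; your ``affine slice'' remark does not supply this mechanism.
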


The starting point of the proof is the bound 
\begin{equation}
\frac{\overline{(-\mathcal{K}_{(\mathcal{X},\mathcal{D})}})^{n+1}}{(n+1)!}\leq\frac{1}{2}\mathrm{vol}(X,\Delta)\log\left(\frac{(2\pi^{2})^{n}}{\mathrm{vol}(X,\Delta)}\right)\,\,\,\,\mathrm{vol}(X,\Delta):=\frac{-(K_{X}+\Delta)^{n}}{n!},\label{eq:ineq in pf thm log toric intro}
\end{equation}
shown precisely as in the case when $\Delta=0,$ considered in \cite{ab}.
For $X=\P^{n}$ the previous theorem is verified by an explicit calculation.
In the remaining case, $X\neq\P^{n},$ the bound in Conjecture \ref{conj:height log intro}
follows, just as in \cite{ab}, from combining the bound \ref{eq:ineq in pf thm log toric intro}
with the following logarithmic analog of the ``gap hypothesis''
introduced in \cite{ab}: 
\begin{equation}
\text{vol}(X,\Delta)\leq\text{vol}(\P^{n-1}\times\P^{1})\label{eq:log gap intro}
\end{equation}
 for any K-semistable $n-$dimensional Fano variety $(X,\Delta)$
such that $X\neq\P^{n}.$ In the case that $X$ is singular the logarithmic
gap hypothesis does hold in any dimension, just as in \cite{ab}.
In the non-singular case there is, for any dimension, only a finite
number of toric Fano varieties $X.$ For $n\leq6$ these appear in
the database \cite{ob}, which, as observed in \cite{ab}, settles
the gap hypothesis for $n\leq6,$ when $\Delta=0.$ However, in the
present case there is for any given toric variety $X$ an infinite
number of toric divisors $\Delta$ on $X$ such that $(X,\Delta)$
is a K-semistable Fano variety. In order to establish the logarithmic
gap-hypothesis \ref{eq:log gap intro} we thus introduce the following
invariant of a Fano manifold $X:$ 
\[
S(X):=\sup_{\Delta}\left\{ \mathrm{vol}(X,\Delta):\,(X,\Delta)\,\text{K-semistable\,log Fano}\right\} 
\]
and show, by solving the corresponding optimization problem, that
$S(X)\leq\text{vol}(\P^{n-1}\times\P^{1})$ when $X\neq\P^{n}$ and
$n\leq3.$ 

The invariant $S(X)$ and the corresponding maximizers $\Delta$ appear
to be of independent interest in Kähler geometry. This is illustrated
by some examples in Section \ref{subsec:The-logarithmic-gap}, where
we make contact with a rigidity property of the corresponding log
Kähler-Einstein metric, first exhibited in \cite{r-z}. 

\subsubsection{Hyperplane arrangements}

We next turn to the case when $\mathcal{X}$ is the projective space
over the integers, $\mathcal{X}=\P_{\Z}^{n}$ and $\mathcal{D}$ is
a hyperplane arrangement, i.e. its irreducible components are hyperplanes. 
\begin{thm}
\label{thm:hyperplane intro}Conjecture \ref{conj:height log intro}
holds when $\mathcal{X}=\P_{\Z}^{n}$ and $\mathcal{D}$ is a hyperplane
arrangement with simple normal crossings.
\end{thm}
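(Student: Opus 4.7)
The plan is to reduce, via Archimedean extremality, to bounding the height of the log Kähler-Einstein metric on $(\P_{\C}^{n},\Delta)$, and then to control this height by comparison with an explicit singular Fubini-Study-type reference metric whose value is computable in closed form from the integrality of the arrangement.

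First, the Archimedean extremality argument of \cite[Section 2.3]{a-b} (invoked in the discussion following Conjecture \ref{conj:height log intro}) applies in the logarithmic setting to show that $(\overline{-\mathcal{K}_{(\mathcal{X},\mathcal{D})}})^{n+1}$ is maximized, over volume-normalized continuous metrics of positive curvature on $-(K_{\P_{\C}^{n}}+\Delta)$, by the log Kähler-Einstein metric, whose existence when $(\P^{n},\Delta)$ is K-polystable follows from \cite{l-x-z}. The K-semistable case then reduces to the K-polystable one by a small perturbation of the coefficients $c_{i}$ of $\Delta$ followed by a limiting argument (as in \cite{a-b}). Hence it suffices to prove the bound for the log KE metric.

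Second, write each $H_{i}$ as $\{L_{i}=0\}$ for a primitive integral linear form $L_{i}$. The ratio $|L_{i}|^{2}/\|z\|^{2}$ defines an explicit singular metric on $\mathcal{O}(H_{i})$ of curvature $\omega_{FS}-[H_{i}]$, and combining these with the Fubini-Study metric on $\mathcal{O}(n+1-\sum c_{i})$ gives a canonical singular reference metric on $-(K_{\P_{\C}^{n}}+\Delta)$. Its height is computed in closed form by expanding
\[
(\overline{-\mathcal{K}_{(\mathcal{X},\mathcal{D})}})^{n+1}=\sum_{k=0}^{n+1}\binom{n+1}{k}(-1)^{k}(\overline{-\mathcal{K}_{\mathcal{X}}})^{n+1-k}\cdot\overline{\mathcal{D}}^{\,k}
\]
and applying the arithmetic projection formula along the linear subspaces cut out by intersections of the $H_{i}$: the Archimedean pieces reduce to \eqref{eq:expl formul on p n} on lower-dimensional projective spaces together with standard integrals of the form $\int_{\P^{n}}\log(|L_{i}|^{2}/\|z\|^{2})\,\omega_{FS}^{n}$ (universal constants by unitary invariance), while the non-Archimedean contributions vanish by primitivity of the $L_{i}$.

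Finally, the height of the log KE metric differs from this reference by the value of a logarithmic Mabuchi/Ding functional along the geodesic connecting the two. K-semistability of $(\P^{n},\Delta)$ controls this difference in terms of the $c_{i}$ and the combinatorics of the arrangement, and combined with Fujita's bound $\mathrm{vol}(\P^{n},\Delta)\leq(n+1)^{n}/n!$ and direct manipulation of \eqref{eq:expl formul on p n} yields $(\overline{-\mathcal{K}_{(\mathcal{X},\mathcal{D})}})^{n+1}\leq(\overline{-\mathcal{K}_{\P_{\Z}^{n}}})^{n+1}$. The main obstacle is precisely this last step: for a generic (non-toric) SNC hyperplane arrangement the log KE metric is not explicit, and controlling its height relative to the reference requires either a variational argument along the family $\Delta_{t}=t\Delta$, $t\in[0,1]$, whose $t=0$ endpoint is the Fubini-Study height of $\P_{\Z}^{n}$, or a reduction to Theorem \ref{thm:main log toric intro} via a Kummer-type cover branched along the $H_{i}$ under which $\Delta$ pulls back to a toric divisor.
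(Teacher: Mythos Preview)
Your proposal has a genuine gap at Step~3, and the overall strategy diverges from the paper's in a way that is hard to repair.

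The core issue: once you reduce to the log K\"ahler--Einstein metric, that metric \emph{maximizes} the volume-normalized height among all admissible metrics (this is precisely the content of the Archimedean extremality you invoke in Step~1). Any explicit reference metric you write down therefore furnishes a \emph{lower} bound on $h_{\text{can}}(\P_{\Z}^{n},\mathcal{D})$, not the upper bound you need. Your Step~3 asserts that the Mabuchi/Ding difference along the geodesic is ``controlled by K-semistability in terms of the $c_{i}$,'' but K-semistability only says the Ding functional is bounded below (equivalently that $h_{\text{can}}$ is finite); it does not by itself give an effective upper bound on the height. Neither of your proposed remedies closes this gap: the one-parameter family $\Delta_{t}=t\Delta$ has varying volume and its $t=1$ endpoint is exactly the unknown quantity you are trying to bound, while a Kummer cover branched along the $H_{i}$ only produces a toric pullback when the coefficients are of orbifold type $1-1/m_{i}$, not for general rational $c_{i}$.

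The paper's argument is structurally different and never attempts to estimate the non-explicit log KE metric directly. Instead it varies the \emph{weights} at \emph{fixed volume}. By Fujita's explicit K-semistability criterion for hyperplane arrangements \cite{fu2}, the set of K-semistable weight vectors $w$ with $\sum w_{i}$ fixed is a polytope whose vertices are exactly the configurations with $n+1$ equal nonzero entries and the rest zero --- i.e.\ configurations isomorphic over $\C$ to the standard toric pair $(\P^{n},(1-t)D_{0})$. A short H\"older-inequality argument on $\int_{X}\mu_{(\phi,\Delta_{w})}$ shows that $w\mapsto h_{\text{can}}(\P_{\Z}^{n},\mathcal{D}_{w})$ is strictly convex on this polytope (here it is essential that the line bundle $-(\mathcal{K}+\mathcal{D}_{w})$ is independent of $w$ once the volume is fixed), so the maximum over K-semistable weights is attained at a vertex. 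A separate arithmetic comparison, governed by the determinant of the integer matrix taking the given $n+1$ hyperplanes to the coordinate hyperplanes, shows that each such vertex has height at most that of the \emph{standard} toric divisor $(1-t)\mathcal{D}_{0}$; this last case is already covered by the toric theorem. The convexity-in-weights reduction to the vertices of Fujita's polytope is the idea your outline is missing.
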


The proof employs a convexity argument to reduce the problem to the
case when $\mathcal{D}$ is toric, which is covered by Theorem \ref{thm:main log toric}.
The argument leverages the explicit characterization of K-semistable
hyperplane arrangements established in \cite{fu2} and yields the
following explicit bound: 
\begin{equation}
\frac{(\overline{-\mathcal{K}_{(\mathcal{X},\mathcal{D})}})^{n+1}}{(n+1)!}\leq\frac{1}{2}\text{vol}(X,\Delta)\log\left(\frac{(n+1)^{n}e^{2a_{n}}}{(n+1)!\text{vol}(X,\Delta)}\right),\,\,\,a_{n}=\frac{(\overline{-\mathcal{K}_{\P_{\Z}^{n}}})^{n+1}}{\left(n+1\right)^{n+1}}\label{eq:estimate by toric intro}
\end{equation}
with equality iff $\mathcal{D}$ is toric.

\subsubsection{\label{subsec:Application-to-diagonal}Application to diagonal hypersurfaces}

Given a positive integer $d$ and integers $a_{i},$ consider the
diagonal hypersurface $\mathcal{X}_{a}$ of degree $d$ in $\P_{\Z}^{n+1}$
cut out by the homogeneous polynomial

\[
\sum_{i=0}^{n+1}a_{i}x_{0}^{d},
\]
 The corresponding complex variety $X_{a}$ is Fano if and only if
$d\leq(n+1)$ and is always K-polystable (and, in particular, K-semistable);
see, for example, \cite{zhu} for an algebraic proof. Using the results
stated in the previous two sections we will establish Conjecture \ref{conj:height log intro}
for $\mathcal{X}_{a},$ endowed with the trivial divisor $0:$ 
\begin{thm}
\label{thm:diagonal hypersurface intro}Conjecture \ref{conj:height log intro}
holds for any diagonal hypersurface $\mathcal{X}_{a}$ which is Fano
(i.e. $d\leq n+1)$ when the divisor $\mathcal{D}$ is trivial. More
precisely, 
\[
(\overline{-\mathcal{K}_{\mathcal{X}_{a}}})^{n+1}\leq(\overline{-\mathcal{K}_{\P_{\Z}^{n}}})^{n+1}+(1-d)(n+2-d)^{n}\sum_{i=0}^{n+1}\log|a_{i}|.
\]
and the inequality is strict if $d\geq2.$
\end{thm}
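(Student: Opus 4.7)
The plan is to realize $X_a$ as a cyclic cover of $\P^n$ branched along a symmetric hyperplane arrangement, apply the bound of Theorem~\ref{thm:hyperplane intro} on the base, and extract the $a_i$-dependence from an Arakelov computation on the ambient $\P_{\Z}^{n+1}$.

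First I would introduce the $\Z$-morphism
\[
\phi_a : \mathcal{X}_a \longrightarrow \mathcal{H}_0 \subset \P_{\Z}^{n+1}, \qquad [x_0 : \cdots : x_{n+1}] \mapsto [a_0 x_0^d : \cdots : a_{n+1} x_{n+1}^d],
\]
where $\mathcal{H}_0 := \{y_0+\cdots+y_{n+1}=0\} \cong \P_{\Z}^n$. Over $\C$ this is a Galois cover with group $\mu_d^{n+2}/\mu_d$ of degree $d^{n+1}$, ramified to index $d$ along the hyperplane arrangement $L = \sum_{i=0}^{n+1} L_i$ with $L_i := \{y_i = 0\}\cap \mathcal{H}_0$. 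Log pullback (equivalently, adjunction combined with $-K_{X_a} = (n+2-d)\,H|_{X_a}$ and $-K_{\mathcal{H}_0} = (n+1)\,H|_{\mathcal{H}_0}$) gives
\[
\phi_a^*\bigl(-(K_{\mathcal{H}_0} + \Delta)\bigr) = -K_{\mathcal{X}_a}, \qquad \Delta := \tfrac{d-1}{d}\sum_{i=0}^{n+1} L_i,
\]
and $(\P^n, \Delta)$ is a K-polystable log Fano of volume $V = (n+2-d)^n/(d^n n!)$ by its $S_{n+2}$-symmetry and Fujita's barycentric criterion \cite{fu2}. Uniqueness of the log K\"ahler--Einstein metric then forces the K\"ahler--Einstein metric on $-K_{X_a}$ to coincide with $\phi_a^* h_{KE}$, and this pullback is automatically volume-normalized.

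Applying Theorem~\ref{thm:hyperplane intro} to $(\P_{\Z}^n, \mathcal{D})$ yields
\[
\bigl(\overline{-\mathcal{K}_{(\P_{\Z}^n,\mathcal{D})}}\bigr)^{n+1} \le \bigl(\overline{-\mathcal{K}_{\P_{\Z}^n}}\bigr)^{n+1},
\]
with strict inequality when $d \ge 2$, since $\Delta$ then has $n+2 > n+1$ components and is not a toric divisor on $\P^n$. To transfer this to $\mathcal{X}_a$ with the claimed $a_i$-correction, I would cut $\mathcal{X}_a$ out by the section $s_a = \sum_i a_i x_i^d \in H^0(\P_{\Z}^{n+1}, \mathcal{O}(d))$ and apply the arithmetic Poincar\'e--Lelong formula on $\P_{\Z}^{n+1}$ to compare $(\overline{-\mathcal{K}_{\mathcal{X}_a}})^{n+1}$ with the Fermat height $(\overline{-\mathcal{K}_{\mathcal{F}_d}})^{n+1}$ ($a_i = 1$). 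The Archimedean difference comes from the change of coordinates $z_i = a_i^{1/d} x_i$ identifying $X_a(\C)$ with $F_d(\C)$, whose log-Jacobian $\tfrac{1}{d}\sum_i \log|a_i|$ integrates against the Chern forms on $\P^{n+1}(\C)$; the non-Archimedean differences, stemming from the failure of $\phi_a$ to be finite flat at the primes $p \mid a_i$, combine by the product formula $\sum_p v_p(a_i)\log p = \log|a_i|$. Working out the Arakelov calculation (keeping track of the overall $(n+2-d)^{n+1}$ factor arising from $-\mathcal{K}_{\mathcal{X}_a} = (n+2-d)\,H|_{\mathcal{X}_a}$) would yield the exact identity
\[
\bigl(\overline{-\mathcal{K}_{\mathcal{X}_a}}\bigr)^{n+1} = \bigl(\overline{-\mathcal{K}_{\mathcal{F}_d}}\bigr)^{n+1} + (1-d)(n+2-d)^n \sum_{i=0}^{n+1}\log|a_i|.
\]
Combined with the Fermat baseline $(\overline{-\mathcal{K}_{\mathcal{F}_d}})^{n+1} \le (\overline{-\mathcal{K}_{\P_{\Z}^n}})^{n+1}$ (the specialization $a_i = 1$, produced by the same cover construction and the strict form of Theorem \ref{thm:hyperplane intro}), this gives the desired inequality, strict for $d \ge 2$.

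The main obstacle is pinning down the coefficient $(1-d)(n+2-d)^n$ in front of $\sum_i \log|a_i|$: one must reconcile the Archimedean log-Jacobian of $z_i = a_i^{1/d} x_i$ (a change of variables not defined over $\Q$) with the non-Archimedean vertical discrepancy at primes $p \mid a_i$ between $\mathcal{X}_a$ and the "naive" pullback integral model, and verify via the product formula that both contributions combine into the stated slope. A natural route is to check the identity in a base case such as $n=1$, $d=2$ (a conic) by explicit Arakelov computation and then promote it to general $(n,d)$ by a deformation argument showing that $(\overline{-\mathcal{K}_{\mathcal{X}_a}})^{n+1}$ is an affine function of $\sum_i \log|a_i|$ with slope exactly $(1-d)(n+2-d)^n$.
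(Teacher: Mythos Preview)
Your overall architecture matches the paper's: reduce to the Fermat case, realize the Fermat hypersurface as a degree-$d^{n+1}$ cover of $\P^n$ branched along $\Delta=\tfrac{d-1}{d}\sum_{i}L_i$, and invoke Theorem~\ref{thm:hyperplane intro}. But there is a genuine gap at the transfer step. The claim that ``this pullback is automatically volume-normalized'' is false: if $\psi$ is the volume-normalized log K\"ahler--Einstein metric on $-K_{(\P^n,\Delta)}$, the paper's computation $F^*\mu_\psi=d^{2(n+1)}\mu_{F^*\psi}$ together with $\deg F=d^{n+1}$ gives $\int_{X_1}\mu_{F^*\psi}=d^{-(n+1)}$. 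Correcting for this produces
\[
\hat h_{\text{can}}(\mathcal X_1)=\hat h_{\text{can}}(\P^n_\Z,\mathcal D)-\tfrac12\log\frac{V(X_1)}{V(\P^n,\Delta)}
\]
(Proposition~\ref{prop:red to log hyper}). Once this correction is present, the bare inequality of Theorem~\ref{thm:hyperplane intro} is \emph{not} enough to conclude, because when you pass from normalized to unnormalized heights you multiply by $d^{n+1}$ on the arrangement side; for instance at $n=2$, $d=2$ the resulting bound overshoots $h_{\text{can}}(\P^2_\Z)$ by an order of magnitude. What the paper actually uses is the refined estimate~\eqref{eq:estimate by toric intro}, which reduces $(\P^n_\Z,\mathcal D)$ to the toric pair $(\P^n_\Z,\mathcal D_t)$ of the \emph{same} volume, and then the exact scaling formula of Lemma~\ref{lem:scaled polytope} for $\hat h_{\text{can}}(\P^n_\Z,\mathcal D_t)$. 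These combine to $\hat h_{\text{can}}(\mathcal X_1)\le\hat h_{\text{can}}(\P^n_\Z)-\tfrac12\log\lambda$ with $\lambda=V(X_1)/V(\P^n)$, after which one must still check that $\lambda\mapsto \lambda\,h_{\text{can}}(\P^n_\Z)-\tfrac12(n+1)!V(X_1)\log\lambda$ is increasing on $(0,1]$; this monotonicity step (which also supplies the strictness for $d\ge2$ via $\lambda<1$) has no analogue in your plan.

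On the $a_i$-dependence, the paper's route is both simpler and different from what you sketch. There is no non-archimedean analysis and no product formula: everything happens on the complex points of the ambient $\P^{n+1}$. The restriction formula (Lemma~\ref{lem:(Restriction-formula)-Let}) writes $h_\phi(\mathcal X_a,\mathcal O(d))$ as $(n+2)\,\mathcal E(\phi,d\phi_0)+\int_{\P^{n+1}}\log\|s_a\|_\phi^2\,\mathrm{MA}(\phi)$; the $\C$-linear map $F_a(x)=(a_i^{-1/d}x_i)$ sends $s_a$ to $s_1$, and the only nontrivial input is that $\mathcal E$ is invariant under the \emph{canonical} lift of $F_a$ to $-K_{\P^{n+1}}$ (Lemma~\ref{lem:append}, a consequence of the vanishing Futaki invariant on $\P^{n+1}$). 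The coefficient $(1-d)(n+2-d)^n$ then drops out of the two short calculations in Lemmas~\ref{lem:transf of height} and~\ref{lem:scaling of integral}, with no need for a deformation argument or a base-case check.
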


Note that the schemes $\mathcal{X}_{a}$ are mutually non-isomorphic
over $\Z,$ for any given degree $d$ of at least two. In fact, in
general, they are not even isomorphic over $\Q.$ The proof of the
previous theorem is first reduced to the case of a Fermat hypersurface,
i.e. the case when $a_{i}=1.$ Expressing $\mathcal{X}_{a}$ as a
Galois cover of $\P_{\Z}^{n}$ the estimate \ref{eq:estimate by toric intro}
can then be applied with $\mathcal{X}=\P_{\Z}^{n}$ and $\Delta$
the corresponding branching divisor, which reduces the problem to
a simple toric case.

We recall that the Manin-Peyre conjecture has been settled for Fano
hypersurfaces over $\Q$ of sufficiently small degree \cite{pey,pey2}.
In particular, there is an extensive literature on the diagonal case,
in connection to Waring's classical problem, where the degree bound
has been  improved \cite{pi} (see also \cite{bi} and \cite[Thm 15.6]{woo}
for general number fields).

\subsubsection{\label{subsec:Arithmetic-log-surfaces}Arithmetic log surfaces }

Consider a polarized arithmetic log surface $(\mathcal{X},\mathcal{D};\mathcal{L}),$
i.e. an arithmetic log surface $(\mathcal{X},\mathcal{D})$ endowed
with a relatively ample line bundle $\mathcal{L}.$ Assume that the
complexification $\mathcal{L}\otimes\C$ is isomorphic to $-K_{(X,\Delta)}$
(where $(X,\Delta)$ denotes, as before, the complexification of $(\mathcal{X},\mathcal{D})$).
Given a continuous metric on $\mathcal{L}$ we define the \emph{arithmetic
log Mabuchi functional} of $(\mathcal{X},\mathcal{D};\overline{\mathcal{L}})$
by
\begin{equation}
\mathcal{M}_{(\mathcal{X},\mathcal{D})}(\overline{\mathcal{L}}):=\frac{1}{2}\overline{\mathcal{L}}^{2}+\overline{\mathcal{K}}_{(\mathcal{X},\mathcal{D})}\cdot\overline{\mathcal{L}},\label{eq:def of log Mab surface intro}
\end{equation}
where $\overline{\mathcal{K}}_{(\mathcal{X},\mathcal{D})}$ is endowed
with the volume-normalized metric induced by the curvature current
$\omega$ of $\overline{\mathcal{L}},$ assuming that this metric
on $\overline{\mathcal{K}}_{(\mathcal{X},\mathcal{D})}$ is continuous.
When $\mathcal{D}=0$ the functional $\mathcal{M}_{(\mathcal{X},\mathcal{D})}(\overline{\mathcal{L}})$
coincides with the arithmetic Mabuchi functional introduced in \cite{o},
up to normalization (see \cite[Section 6.4]{a-b}). For a given integral
model $\mathcal{M}_{(\mathcal{X},\mathcal{D})}(\overline{\mathcal{L}})$
is minimized on a log Kähler-Einstein metric, if such a metric exists,
and then

\[
\mathcal{M}_{(\mathcal{X},\mathcal{D})}(\overline{\mathcal{L}})=-(\overline{-\mathcal{K}_{(\mathcal{X},\mathcal{D})}})^{2}/2,
\]
 if the metric is volume-normalized and $\mathcal{L}=-\overline{\mathcal{K}}_{(\mathcal{X},\mathcal{D})}.$ 
\begin{thm}
\label{thm:Mab on log surface}Let $(\mathcal{X},\mathcal{D};\mathcal{L})$
be a polarized arithmetic log surface $(\mathcal{X},\mathcal{D};\mathcal{L})$
with $\mathcal{X}$ normal, such that the complexification $(X,\Delta)$
of $(\mathcal{X},\mathcal{D})$ is a K-semistable Fano variety and
$\mathcal{L}\otimes\C=-K_{(X,\Delta)}.$ If $\Delta$ is supported
on (at most) three points, then 

\[
\mathcal{M}_{(\mathcal{X},\mathcal{D})}(\overline{\mathcal{L}})\geq\mathcal{M}_{(\P_{\Z}^{1},0)}(\overline{-\mathcal{K}_{\P_{\Z}^{1}}})\,\,\,\left(=-1-\log\pi\right)
\]
 where $-\mathcal{K}_{\P_{\Z}^{1}}$ is endowed with the Fubini-Study
metric. Moreover, equality holds iff $(\mathcal{X},\mathcal{D})$
is isomorphic to $(\P_{\Z}^{1},0)$ and $\mathcal{L}$ is isomorphic
to $-\mathcal{K}_{\P_{\Z}^{1}},$ endowed with a metric coinciding
with the Fubini-Study metric, up to the application of an automorphism
of $\P_{\Z}^{1}$ and a scaling of the metric.
\end{thm}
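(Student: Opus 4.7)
The plan is to combine the minimisation of the arithmetic log Mabuchi functional by the log K\"ahler--Einstein metric with the $n=1$ case of Conjecture \ref{conj:height log intro}, which delivers the height bound $(\overline{-\mathcal{K}_{(\mathcal{X},\mathcal{D})}})^{2}\leq(\overline{-\mathcal{K}_{\P_{\Z}^{1}}})^{2}$. That bound is available here: after an automorphism of $\P_{\C}^{1}$ the support of $\Delta$ can be placed at $\{0,1,\infty\}$, exhibiting $(X,\Delta)$ as a toric log Fano pair, so Theorem \ref{thm:main log toric intro} applies (with $n=1\leq 3$ and $\P^{1}$ trivially $\Q$-factorial); Theorem \ref{thm:hyperplane intro} gives the same bound with the same equality case.

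The first step is to reduce to the log K\"ahler--Einstein metric. As in \cite[Section~2.3]{a-b}, the arithmetic log Mabuchi functional is convex along weak geodesics in the space of positively curved continuous metrics on $\mathcal{L}$ and is minimised by the log K\"ahler--Einstein metric $\omega_{KE}$ on $-K_{(X,\Delta)}$, whose existence follows from K-polystability (any K-semistable log Fano curve with at most three orbifold points admits a log K\"ahler--Einstein representative). Setting $V:=\mathcal{L}+\mathcal{K}_{(\mathcal{X},\mathcal{D})}$, which is a vertical $\Q$-divisor on $\mathcal{X}$ since $\mathcal{L}\otimes\C=-K_{(X,\Delta)}$, a direct expansion of arithmetic intersections (using that $V$ carries no Archimedean contribution) yields
\[
\mathcal{M}_{(\mathcal{X},\mathcal{D})}(\overline{\mathcal{L}})\geq\mathcal{M}_{(\mathcal{X},\mathcal{D})}(\overline{\mathcal{L}}_{KE})=-\frac{1}{2}\bigl(\overline{-\mathcal{K}_{(\mathcal{X},\mathcal{D})}}\bigr)_{KE}^{2}+\frac{1}{2}V^{2}.
\]

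The second step is to descend to the canonical integral model. For $\mathcal{X}=\P_{\Z}^{1}$ (or the canonical toric model in the non-trivial $\Delta$ case) every vertical divisor is principal, so $V^{2}=0$ and the height bound above gives $\mathcal{M}\geq-\frac{1}{2}(\overline{-\mathcal{K}_{\P_{\Z}^{1}}})^{2}=-1-\log\pi$ via the explicit formula \eqref{eq:expl formul on p n}. For a general normal arithmetic surface $\mathcal{X}$ with generic fibre $\P_{\Q}^{1}$ I would resolve singularities and contract exceptional vertical curves down to $\P_{\Z}^{1}$, which is the unique relatively minimal regular model; the projection formula, the arithmetic adjunction $\mathcal{K}_{\tilde{\mathcal{X}}}=\pi^{*}\mathcal{K}_{\P_{\Z}^{1}}+\sum a_{i}E_{i}$, and the negativity of the intersection form on vertical exceptional divisors should combine to show that the right-hand side above is non-decreasing under contraction, since any decrease in $V^{2}$ is compensated by the larger decrease in $(\overline{-\mathcal{K}})^{2}$ forced by the relative ampleness of $\mathcal{L}$ (exactly the ampleness constraint that excludes pathological vertical perturbations, as one can check on simple blow-ups of $\P_{\Z}^{1}$).

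The main obstacle will be this descent step: one has to track carefully how $(\overline{-\mathcal{K}_{(\mathcal{X},\mathcal{D})}})^{2}$ and $V^{2}$ jointly transform under arithmetic blow-up and contraction, and use the ampleness of $\mathcal{L}$ together with the arithmetic Hodge index theorem to bound how negative $V^{2}$ can become, guaranteeing that the Mabuchi bound survives reduction to $\P_{\Z}^{1}$. Once that is in place the equality case follows by unwinding the chain: equality forces $\omega=\omega_{KE}$ (convexity step), $\tilde{\mathcal{X}}=\mathcal{X}=\P_{\Z}^{1}$ with $V=0$ (contraction step), and $(X,\Delta)=(\P^{1},0)$ by the equality clause of the invoked height bound, giving the stated rigidity up to automorphism of $\P_{\Z}^{1}$ and scaling of the metric.
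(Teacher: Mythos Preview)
Your plan has a genuine gap at its heart, and it is precisely the step you flag as the ``main obstacle''. When you write that Theorem~\ref{thm:main log toric intro} or Theorem~\ref{thm:hyperplane intro} ``applies'' to give $(\overline{-\mathcal{K}_{(\mathcal{X},\mathcal{D})}})^{2}\leq(\overline{-\mathcal{K}_{\P_{\Z}^{1}}})^{2}$, this is not correct: both theorems concern a \emph{specific} integral model (the canonical toric one, respectively $\P_{\Z}^{n}$ itself), not an arbitrary normal arithmetic surface $(\mathcal{X},\mathcal{D})$ with complexification $(\P^{1},\Delta)$. The height bound for general integral models is exactly Corollary~\ref{cor:Conjecture holds for log Fano surfaces}, which is \emph{deduced from} the theorem you are trying to prove, not an input to it. Once you acknowledge this, the entire burden of the argument falls on your ``descent to $\P_{\Z}^{1}$'' step, for which you give only heuristics (``should combine to show'', ``one has to track carefully'').

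The paper's approach to this descent is quite different from your sketch and supplies the missing idea. Rather than tracking $(\overline{-\mathcal{K}})^{2}$ and $V^{2}$ separately under contraction and appealing to Hodge index, the paper works with $\mathcal{M}_{(\mathcal{X},\mathcal{D})}(\overline{\mathcal{L}})$ directly and compares integral models \emph{before} optimising over metrics. The key identity is
\[
\mathcal{M}_{(\mathcal{Y},q^{*}\mathcal{D}')}(p^{*}\mathcal{L}-E)-\mathcal{M}_{(\P_{\Z}^{1},\mathcal{D}_{c})}(\mathcal{L})=(p^{*}\mathcal{L}-E)\cdot\bigl(q^{*}\mathcal{D}'-p^{*}\mathcal{D}_{c}+\mathcal{K}_{\mathcal{Y}/\P_{\Z}^{1}}-\tfrac{1}{2}E\bigr),
\]
and the decisive input is the bound $\alpha(\P_{\Z}^{1},\mathcal{D}_{c})\geq\tfrac{1}{2}$ on an arithmetic log canonical threshold (proved via inversion of adjunction over each closed fibre, using the K-semistability weight condition). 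This forces the exceptional divisor $\mathcal{K}_{\mathcal{Y}/\P_{\Z}^{1}}+q^{*}\mathcal{D}'-p^{*}\mathcal{D}_{c}-\tfrac{1}{2}E$ to be effective, and relative semi-ampleness of $p^{*}\mathcal{L}-E$ then makes the difference non-negative. Only \emph{after} this model comparison does the paper optimise over metrics (your first step) and invoke Theorem~\ref{thm:hyperplane intro} on the canonical model. Your order of operations is reversed, which is why you end up having to control two quantities simultaneously without a clear mechanism; the lct argument is the mechanism you are missing.
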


\begin{cor}
\label{cor:Conjecture holds for log Fano surfaces}Conjecture \ref{conj:height log intro}
holds for arithmetic normal log Fano surfaces, if $\Delta$ is supported
on, at most, three points. In particular, the conjecture holds for
all normal arithmetic log Fano orbifold surfaces (i.e. the case of
coefficents of the form $1-1/m_{i}$ for $m_{i}\in\N$).
\end{cor}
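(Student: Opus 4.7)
The plan is to deduce Corollary \ref{cor:Conjecture holds for log Fano surfaces} from Theorem \ref{thm:Mab on log surface} by combining its Mabuchi lower bound with the identification of $\mathcal{M}$ with minus half the height at a log K\"ahler--Einstein metric, which is exactly the duality already recorded right before the statement of the theorem.

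First I would reduce to the case where the given metric on $-K_{(X,\Delta)}$ is log K\"ahler--Einstein. By the maximality property of log K\"ahler--Einstein metrics for the height (noted in the excerpt and proved precisely as in \cite[Section 2.3]{a-b}), any volume-normalized continuous metric of positive curvature has height bounded above by the value at a log K\"ahler--Einstein representative, when such exists. In the K-polystable case such a representative exists by \cite[Thm 1.6]{l-x-z}; in the present one-dimensional setting it is also classical, given by Troyanov's conical spherical metric on $\P^{1}$ with at most three marked points. At a log K\"ahler--Einstein metric the identity
\[
\mathcal{M}_{(\mathcal{X},\mathcal{D})}(\overline{-\mathcal{K}_{(\mathcal{X},\mathcal{D})}}) \;=\; -\tfrac{1}{2}(\overline{-\mathcal{K}_{(\mathcal{X},\mathcal{D})}})^{2}
\]
from the excerpt applies, and combining it with the lower bound of Theorem \ref{thm:Mab on log surface},
\[
\mathcal{M}_{(\mathcal{X},\mathcal{D})} \;\geq\; -1-\log\pi \;=\; -\tfrac{1}{2}(\overline{-\mathcal{K}_{\P_{\Z}^{1}}})^{2},
\]
immediately yields the desired inequality $(\overline{-\mathcal{K}_{(\mathcal{X},\mathcal{D})}})^{2} \leq (\overline{-\mathcal{K}_{\P_{\Z}^{1}}})^{2}$ of Conjecture \ref{conj:height log intro} for $n=1$. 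The equality-case rigidity transfers directly from the corresponding clause in Theorem \ref{thm:Mab on log surface}.

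For the ``in particular'' clause on orbifold pairs, I would observe that a log Fano structure with orbifold coefficients $1-1/m_{i}$ forces the underlying curve to be $\P^{1}$ and the multiplicities to satisfy $\sum 1/m_{i}>1$, which allows at most three marked points. A short calculation shows that with $m_{i}\geq 2$ and $\sum 1/m_{i}>1$ the resulting coefficients automatically satisfy the strict triangle inequality, which on $\P^{1}$ characterizes K-polystability of the pair; thus every orbifold case falls into the K-polystable setup handled above.

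The only point I expect to require additional care is the strictly K-semistable, non-K-polystable range of three-point pairs on $\P^{1}$, corresponding to coefficients sitting on the boundary triangle equality, where no log K\"ahler--Einstein metric exists on $(X,\Delta)$ itself. The plan there is to approximate $\Delta$ from within the open K-polystable region of coefficient space, apply the argument above to each approximant, and pass to the limit using continuity of both the height and the arithmetic Mabuchi functional in the coefficients of the divisor (for a fixed integral model and underlying metric).
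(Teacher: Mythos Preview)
Your deduction of the corollary from Theorem \ref{thm:Mab on log surface} is sound, and in fact the paper explicitly mentions this route (``Corollary \ref{cor:Conjecture holds for log Fano surfaces} can be deduced from Theorem \ref{thm:Mab on log surface} using a generalization of Lemma \ref{lem:inf is inf}''). However, the paper then opts for a different, more direct argument. The key observation you are missing is that one can work with the \emph{general} Mabuchi-type functional \eqref{eq:more general log Mab}, where the metric on $\mathcal{K}_{(\mathcal{X},\mathcal{D})}$ is chosen freely rather than being the one induced by the curvature of $\overline{\mathcal{L}}$. Taking $\mathcal{L}=-\mathcal{K}_{(\mathcal{X},\mathcal{D})}$ and endowing both $\mathcal{L}$ and $\mathcal{K}_{(\mathcal{X},\mathcal{D})}$ with $\pm\phi$ for an \emph{arbitrary} volume-normalized psh metric $\phi$, the identity $\mathcal{M}_{(\mathcal{X},\mathcal{D})}(\mathcal{L})=-\tfrac{1}{2}(\overline{-\mathcal{K}_{(\mathcal{X},\mathcal{D})}})^{2}$ holds tautologically, not just at a log K\"ahler--Einstein metric. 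Since the model-minimization Lemma \ref{lem:Odaka log alpha } (together with Lemma \ref{lem:alpha geq one half}) is proved for this general functional, one gets $-\tfrac{1}{2}(\overline{-\mathcal{K}_{(\mathcal{X},\mathcal{D})}})^{2}\geq -\tfrac{1}{2}(\overline{-\mathcal{K}_{(\P_{\Z}^{1},\mathcal{D}_{c})}})^{2}$ directly for every admissible metric, and Theorem \ref{thm:hyperplane intro} finishes the job. This bypasses both the appeal to existence of a log KE metric and your approximation step in the strictly K-semistable case. Your approach has the virtue of staying close to the statement of Theorem \ref{thm:Mab on log surface} as a black box; the paper's approach buys uniformity over all metrics at once and avoids any continuity/limiting argument. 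Your treatment of the orbifold clause is correct and matches the paper's intent.
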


In the setup of the previous theorem the corresponding complex variety
$X$ is always equal to $\P^{1}$ and thus $(X,\Delta)$ is a hyperplane
arrangement. Accordingly, applying Theorem \ref{thm:hyperplane intro},
the proof of Theorem \ref{thm:Mab on log surface} is reduced to showing
that the canonical integral model $(\mathcal{X}_{c},\mathcal{D}_{c};-\mathcal{K}_{(\mathcal{X}_{c},\mathcal{D}_{c})})$
of $(X,\Delta;-K_{(X,\Delta)})$ obtained by setting $\mathcal{X}_{c}=\P_{\Z}^{1}$
and taking $\mathcal{D}_{c}$ to be the Zariski closure of $\{0,1,\infty)\}$
in $\P_{\Z}^{1}$ minimizes $\mathcal{M}_{(\mathcal{X},\mathcal{D})}(\overline{\mathcal{L}})$
over all integral models $(\mathcal{X},\mathcal{D};\mathcal{L})$
of $(X,\Delta;-K_{(X,\Delta)}$), for any fixed metric on $-K_{(X,\Delta)}.$
This minimization property can be viewed as logarithmic version of
Odaka's minimization conjecture (proposed in any dimension in \cite{o}).
Our proof builds on \cite{o}, leveraging log canonical thresholds.
We also show that the minimum is uniquely attained for $(\P_{\Z}^{1},0).$
See also \cite{h-o} for very recent progress on Odaka's minimization
conjecture in another direction.

\subsection{Outlook on the case of $\P_{\Z}^{1}$ endowed with a divisor with
three components}

Consider now the special case of $(\P_{\Z}^{1},\mathcal{D}_{c}),$
where $\mathcal{D}_{c}$ is the Zariski closure of $\{0,1,\infty)\}$
in $\P_{\Z}^{1}$ and $-K_{(\P^{1},\Delta_{\Q})}$ is endowed with
the volume-normalized log Kähler-Einstein metric. In this case an
explicit formula for the canonical height $\overline{(-\mathcal{K}_{(\P_{\Z}^{1},\mathcal{D}_{c})}})^{2}$
is established in the sequel \cite{a-b2} to the present work. More
precisely, expressing 
\[
\Delta_{\Q}=\sum_{i=1}^{3}w_{i}p_{i},\,\,\,V=2-\sum_{i=1}^{3}w_{i}>0
\]
 (where $V$ is the algebraic degree of $-K_{(\P^{1},\Delta_{\Q})})$
it is shown in \cite{a-b2} that, when $(\P_{1},\Delta_{\Q})$ is
K-polystable,

\begin{equation}
\frac{(\overline{-\mathcal{K}_{(\P_{\Z}^{1},\mathcal{D}_{c})}})^{2}}{2V}=\frac{1}{2}(1+\log\pi-\log\frac{V}{2})-\frac{\gamma(0,\frac{V}{2})+\sum_{i=1}^{3}\gamma(w_{i},w_{i}+\frac{V}{2})}{V},\label{eq:height with zeta}
\end{equation}
 where $\gamma(a,b)$ may be expressed in terms of Hurwitz zeta function
$\zeta(s,x)$ and its derivative $\zeta'(s,x)$ wrt $s:$
\[
\gamma(a,b)=F(b)+F(1-b)-F(a)-F(1-a),\,\,\,F(x):=\zeta(-1,x)+\zeta'(-1,x).
\]
 Moreover, the right hand side of formula \ref{eq:height with zeta}
is shown to extend real-analytically, wrt the coefficients $w_{i},$
to the case $K_{(\P^{1},\Delta_{\Q})}>0$ (i.e. $V<0),$ as long as
$w_{i}\in]0,1].$ In this case formula \ref{eq:height with zeta}
computes $(\mathcal{K}_{(\P_{\Z}^{1},\mathcal{D}_{c})})^{2}/2V$ and
can be related to Kudla's program \cite{kud} and the Maillot-Rössler
conjectures \cite{m-r1,m-r2}, expressing the height of Shimura varieties
$(\mathcal{X},\mathcal{D}),$ wrt canonical metrics, in terms of the
Dedekind zeta function $\zeta_{\F}(s)$ of the number field $\F$
attached to $(\mathcal{X},\mathcal{D})$ and its derivative $\zeta'_{\F}(s)$
at $s=-1.$ More precisely, this connection concerns the $18$ Shimura
varieties that are isomorphic to $(\P^{1},\Delta)$ over $\F,$ for
a particular orbifold divisor $\Delta.$ In particular, combining
formula \ref{eq:height with zeta} with the recent work \cite{yu1}
on modular heights yields information about the canonical integral
models of the corresponding Shimura curves.

\subsection{Acknowledgments}

Thanks to Julia Brandes, Dennis Eriksson, Mattias Jonsson, Gerard
Freixas i Montplet and Yuji Odaka for discussions. This work was supported
by a Wallenberg Scholar grant from the Knut and Alice Wallenberg foundation.

\section{\label{sec:General-setup}General setup}

\subsection{\label{subsec:Log-Fano-varieties metrics}Log Fano varieties over
$\C$ and volume-normalized metrics on $-(K_{X}+\Delta)$ }

A \emph{log pair }$(X,\Delta)$ over $\C$ is a normal complex projective
variety $X$ together with an effective $\Q-$divisor $\Delta$ on
$X$ such that $K_{X}+\Delta$ is $\Q-$Cartier, i.e. defines a $\Q-$line
bundle, where $K_{X}$ denotes the canonical divisor on $X$ \cite{ko0}.
In the logarithmic setting this bundle plays the role of the canonical
line bundle and is thus called the\emph{ log canonical line bundle}
and is denoted by $K_{(X,\Delta)}.$ A log pair $(X,\Delta)$ is said
to be a\emph{ log Fano pair} if $\Delta$ is effective and $-(K_{X}+\Delta)>0.$
Any continuous metric $\left\Vert \cdot\right\Vert $ on $-(K_{X}+\Delta)$
induces a measure $\mu$ on $X$ in a standard fashion. Indeed, when
$X$ is regular and $\Delta=0$ this follows directly from the definition
of metrics on $-K_{X}$ (see \cite[Section 2.1.2]{a-b}). In general,
denoting by $X_{reg}$ the regular locus of $X,$ this construction
yields a measure on $X_{reg}-\text{supp}(\Delta)$ whose push-forward
to $X,$ under the inclusion map, yields a measure on $X$ (see also
\cite[Section 3.1]{bbegz} for a slightly different representation
of this measure). This measure has finite mass iff the log pair $(X,\Delta)$
is \emph{klt} in the standard sense of birational algebraic geometry
(see \cite[Section 3.1]{bbegz} and Remark \ref{rem:klt} below).
A continuous metric on $-(K_{X}+\Delta)$ will be said to be \emph{volume-normalized}
if the corresponding measure is a probability measure. 

\subsubsection{\label{subsec:Local-representations-of}Local representations of
metrics and measures}

As in \cite{ab} we will use additive notation for metrics on holomorphic
line bundles $L\rightarrow X.$ This means that we identify a continuous
Hermitian metric $\left\Vert \cdot\right\Vert $ on $L$ with a collection
of continuous local functions $\phi_{U}$ associated to a given covering
of $X$ by open subsets $U$ and trivializing holomorphic sections
$e_{U}$ of $L\rightarrow U:$
\begin{equation}
\phi_{U}:=-\log(\left\Vert e_{U}\right\Vert ^{2}).\label{eq:def of phi U}
\end{equation}
The curvature current of the metric may then, locally, be expressed
as 

\[
dd^{c}\phi_{U}:=\frac{i}{2\pi}\partial\bar{\partial}\phi_{U}
\]
Accordingly, as is customary, we will symbolically denote by $\phi$
a given continuous Hermitian metric on $L$ and by $dd^{c}\phi$ its
curvature current. We will denote by $\mathcal{C}^{0}(L)\cap\text{PSH\ensuremath{(L)}}$
the space of all continuous metrics on $L$ whose curvature current
is positive, $dd^{c}\phi\geq0$ (which means that $\phi_{U}$ is plurisubharmonic,
or psh, for short). 

Given a log Fano pair $(X,\Delta)$ the measure corresponding to a
given continuous metric $\phi$ on $-K_{(X,\Delta)}$ may be locally
on $X_{reg}$ be expressed as
\[
\mu_{\phi}=e^{-\phi_{U}}\left|s_{U}\right|^{-2}(\frac{i}{2})^{n^{2}}dz\wedge d\bar{z},\,\,\,dz:=dz_{1}\wedge\cdots\wedge dz_{n}
\]
 by taking $e_{U}=\partial/\partial z_{1}\wedge\cdots\wedge\partial/\partial z_{n}\otimes e_{\Delta}$
where $e_{\Delta}$ is a local trivialization of the $\Q-$line bundle
over $X_{reg}$ corresponding to the divisor $\Delta$ and $s_{U}e_{\Delta}$
is the (multi-valued) holomorphic section cutting out $\Delta.$ 

\subsubsection{Log Kähler-Einstein metrics}

Given a log Fano pair $(X,\Delta)$ a metric $\phi$ on $(X,\Delta)$
is said to be a\emph{ log Kähler-Einstein metric,} if $\phi$ is a
locally bounded metric and its curvature current $dd^{c}\phi$ induces
a Kähler metric with constant positive Ricci curvature on the complement
of $\Delta$ in $X_{reg}$ \cite{bbegz}. When $X$ is smooth any
log Kähler-Einstein metric is, in fact, continuous (see \cite{j-m-r,g-m}
for more general higher order regularity results).

\subsubsection{K-semistability}

We next recall the definition of K-semistability in terms of intersection
numbers (see the survey \cite{x} for more background). A \emph{test
configuration} for a log Fano pair $(X,\Delta)$ is a $\C^{*}-$equivariant
normal model $(\mathscr{X},\mathscr{L})$ for $(X,-(K_{(X,\Delta)})$
over the complex affine line $\C.$ More precisely, $\mathscr{X}$
is a normal complex variety endowed with a $\C^{*}-$action $\rho$,
a $\C^{*}-$equivariant holomorphic projection $\pi$ to $\C$ and
a relatively ample $\C^{*}-$equivariant $\Q-$line bundle $\mathscr{L}$
(endowed with a lift of $\rho):$ 
\begin{equation}
\pi:\mathcal{\mathscr{X}}\rightarrow\C,\,\,\,\,\,\mathscr{L}\rightarrow\mathscr{X},\,\,\,\,\,\,\rho:\,\,\mathscr{X}\times\C^{*}\rightarrow\mathscr{X}\label{eq:def of pi for test c}
\end{equation}
such that the fiber of $\mathscr{X}$ over $1\in\C$ is equal to $(X,-(K_{(X,\Delta)}).$
A log Fano pair $(X,\Delta)$ is said to be \emph{K-semistable} if
the Donaldson-Futaki invariants $\text{DF}_{\Delta}(\mathscr{X},\mathscr{L})$
are non-negative for any test configuration $(\mathscr{X},\mathscr{L})$
of $(X,\Delta):$ 
\[
n!\text{DF}_{\Delta}(\mathscr{X},\mathscr{L})=\frac{n}{(n+1)}\overline{\mathscr{L}}^{n+1}+\mathscr{K}_{(\mathcal{\mathscr{\overline{X}}},\mathscr{D})/\P^{1}}\cdot\mathcal{\overline{\mathscr{L}}}^{n},
\]
 where $\overline{\mathscr{L}}$ denotes the $\C^{*}-$equivariant
extension of $\mathscr{L}$ to the $\C^{*}-$equivariant compactification
$\mathscr{\overline{X}}$ of $\mathscr{X}$ over $\P^{1}$ and $\mathscr{K}_{(\mathcal{\mathscr{\overline{X}}},\mathscr{D})/\P^{1}}$
denotes the relative log canonical divisor of the pair $(\mathscr{\overline{X}}$,$\mathscr{D}$)
with $\mathscr{D}$ denoting the flat closure in $\mathscr{\overline{X}}$
of the $\C^{*}-$orbit of the divisor $\Delta.$ 
\begin{rem}
\label{rem:klt}If a log Fano variety $(X,\Delta)$ is K-semistable,
then $(X,\Delta)$ is klt \cite[Cor 9.6]{b-h-j}. When $X$ is non-singular
and $\Delta$ has simple normal crossings this means that all the
coefficients of $\Delta$ along its irreducible components are strictly
smaller than $1.$ 
\end{rem}

\subsection{Arithmetic log Fano varieties}

As explained in the book \cite{ko} the notion of log pairs can be
extended to schemes over excellent rings. Here we will consider the
case when the ring in question is $\Z.$ Henceforth, $\mathcal{X}$
will denote an \emph{arithmetic variety,} i.e. a projective flat scheme
$\mathcal{X}\rightarrow\text{Spec \ensuremath{\Z}}$ of relative dimension
$n$ such that $\mathcal{X}$ is reduced and satisfies Serre's conditions
$S_{2}$ (this is, for example, the case if $\mathcal{X}$ is normal).
We will denote by $\pi$ the corresponding structure morphism to $\text{Spec \ensuremath{\Z}, }$
\[
\pi:\,\mathcal{X}\rightarrow\text{Spec \ensuremath{\Z}.}
\]
A \emph{log pair} $(\mathcal{X},\mathcal{D})$ over $\Z$ (also called
an\emph{ arithmetic log variety}) of relative dimension $n$ is an
arithmetic variety $\mathcal{X}$ endowed with an effective $\Q-$divisor
$\mathcal{D}$ on $\mathcal{X}$ such that $\mathcal{K}_{\mathcal{X}}+\mathcal{D}$
is $\Q-$Cartier, i.e. defines a $\Q-$line bundle, where $\mathcal{K}_{\mathcal{X}}$
denotes the relative canonical divisor on $\mathcal{X}$ (see \cite[Section 1.1]{ko}).
The complexification of $(\mathcal{X},\mathcal{D})$ will be denoted
by $(X,\Delta)$ and $(\mathcal{X},\mathcal{D})$ will be called an
\emph{integral model of $(X,\Delta)$} (although, strictly speaking,
$(\mathcal{X},\mathcal{D})$ is an integral model of the corresponding
log pair over $\Q$). A log pair $(\mathcal{X},\mathcal{D})$ over
$\Z$ will be called an\emph{ arithmetic log Fano variety} if $-(\mathcal{K}_{\mathcal{X}}+\mathcal{D})$
is relatively ample and the corresponding complex variety $X$ is
normal. In particular, $(X,\Delta)$ is a log Fano variety over $\C.$ 

More generally, an arithmetic variety $\mathcal{X}$ endowed with
a relatively ample line bundle $\mathcal{L}$ will be said to be \emph{polarized. }

\subsection{Arithmetic intersection numbers and heights}

We recall some well-known facts about heights (see\cite{a-b} for
more background and references). A\emph{ metrized line bundle} $\overline{\mathcal{L}}$
is a line bundle $\mathcal{L}\rightarrow\mathcal{X}$ such that the
corresponding line bundle $L\rightarrow X$ is endowed with a metric,
that we shall denote by $\phi$ (as in Section \ref{subsec:Local-representations-of});
$\overline{\mathcal{L}}:=\left(\mathcal{L},\phi\right).$ The \emph{$\chi-$arithmetic
volume} of a polarized arithmetic variety $(\mathcal{X},\mathcal{L})$
is defined by 

\begin{equation}
\widehat{\text{vol}}_{\chi}\left(\overline{\mathcal{L}}\right):=\lim_{k\rightarrow\infty}k^{-(n+1)}\log\text{Vol}\text{\ensuremath{\left\{  s_{k}\in H^{0}(\mathcal{X},k\mathcal{L})\otimes\R:\,\,\,\sup_{X}\left\Vert s_{k}\right\Vert _{\phi}\leq1\right\} } , }\label{eq:def of xhi vol}
\end{equation}
where $H^{0}(\mathcal{X},k\mathcal{L})\otimes\R$ may be identified
with the subspace of real sections in $H^{0}(X,kL).$ More generally,
$\widehat{\text{vol}}_{\chi}\left(\overline{\mathcal{L}}\right)$
is naturally defined for $\Q-$line bundles, since it is homogeneous
with respect to tensor products of $\overline{\mathcal{L}}:$ 
\begin{equation}
\widehat{\text{vol}}_{\chi}\left(m\overline{\mathcal{L}}\right)=m^{n+1}\widehat{\text{vol}}_{\chi}\left(\overline{\mathcal{L}}\right),\,\,\,\text{if \ensuremath{m\in\Z_{+}}}\label{eq:chi vol for tensor}
\end{equation}
Moreover, $\widehat{\text{vol}}_{\chi}\left(\overline{\mathcal{L}}\right)$
is additively equivariant with respect to scalings of the metric:
\begin{equation}
\widehat{\text{vol}}_{\chi}\left(\mathcal{L},\phi+\lambda\right)=\widehat{\text{vol}}_{\chi}\left(\overline{\mathcal{L}}\right)+\frac{\lambda}{2}\text{vol\ensuremath{(L)}},\,\,\,\text{if }\lambda\in\R.\label{eq:scaling of chi vol}
\end{equation}
 If the metric on $L$ has positive curvature current (i.e. if $\phi$
is psh), then, by the arithmetic Hilbert-Samuel theorem,
\begin{equation}
\widehat{\text{vol}}_{\chi}\left(\overline{\mathcal{L}}\right)=\frac{\overline{\mathcal{L}}^{n+1}}{(n+1)!},\label{eq:vol chi as intersection}
\end{equation}
 where $\overline{\mathcal{L}}^{n+1}$ denotes the top arithmetic
intersection number in the sense of Gillet-Soulé \cite{g-s}, which,
defines the \emph{height} of $\mathcal{X}$ with respect to $\overline{\mathcal{L}}$
\cite{fa,b-g-s}. For the purpose of the present paper formula \ref{eq:vol chi as intersection}
may be taken as the definition of $\overline{\mathcal{L}}^{n+1}$
(arithmetic intersections between general $n+1$ metrized line bundles
could then be defined by polarization, i.e. using multilinearity).
Following standard practice we will use the shorthand $h_{\phi}(\mathcal{X},\mathcal{L})$
for the height $(\mathcal{L},\phi)^{n+1}$ and $\hat{h}_{\phi}(\mathcal{X},\mathcal{L})$
for the normalized height: 
\[
h_{\phi}(\mathcal{X},\mathcal{L}):=(\mathcal{L},\phi)^{n+1},\,\,\,\hat{h}_{\phi}(\mathcal{X},\mathcal{L}):=\frac{(\mathcal{L},\phi)^{n+1}}{(n+1)L^{n}}.
\]
The definition of $\hat{h}_{\phi}(\mathcal{X},\mathcal{L})$ is made
so that 
\[
\hat{h}_{\phi+\lambda}(\mathcal{X},\mathcal{L})=\hat{h}_{\phi}(\mathcal{X},\mathcal{L})+\lambda/2,\,\,\,\text{if }\lambda\in\R
\]
We also recall that, given two continuous psh metrics $\phi$ and
$\phi_{0}$ on the complexification $L\rightarrow X$ of $\mathcal{L}\rightarrow\mathcal{X},$
we have that 
\begin{equation}
2h\left(\mathcal{L},\phi\right)-2h\left(\mathcal{L},\phi_{0}\right)=\mathcal{E}(\phi,\phi_{0}):=\frac{1}{(n+1)!}\int_{X}(\phi-\phi_{0})\sum_{j=0}^{n}(dd^{c}\phi)^{j}\wedge(dd^{c}\phi_{0})^{n-j}.\label{eq:def of primitive}
\end{equation}

\subsection{\label{subsec:The-canonical-height}The canonical height of an arithmetic
log Fano variety}

We define the \emph{canonical height} $h_{\text{can}}(\mathcal{X},\mathcal{D})$
of an arithmetic log Fano variety $(\mathcal{X},\mathcal{D})$ by
\[
h_{\text{can}}(\mathcal{X},\mathcal{D}):=\sup\left\{ h_{\phi}\left(-\mathcal{K}_{(\mathcal{X},\mathcal{D})}\right):\,\text{\ensuremath{\phi\,\,\text{\ensuremath{\text{cont. }}psh,}}}\int_{X}\mu_{\phi}=1\right\} 
\]
(when $\mathcal{D}=0$ we shall use the short hand $h_{\text{can}}(\mathcal{X})$
for $h_{\text{can}}(\mathcal{X},0)$). As shown precisely as in the
case $\mathcal{D}=0,$ considered in \cite{a-b}, $h_{\text{can}}(\mathcal{X},\mathcal{D})<\infty$
iff the corresponding log Fano variety $(X,\Delta)$ over $\C$ is
K-semistable. Moreover, $(X,\Delta)$ is K-polystable iff the sup
defining $h_{\text{can}}(\mathcal{X},\mathcal{D})$ is attained at
some continuous metric $\phi,$ namely a log Kähler-Einstein metric.
Hence, if $(X,\Delta)$ is K-polystable, then the canonical height
$h_{\text{can}}(\mathcal{X},\mathcal{D})$ is computed by any volume-normalized
log Kähler-Einstein metric. 

\section{\label{sec:Toric-log-Fano}Toric log Fano varieties}

A log pair $(X,D)$ over $\C$ is said to be\emph{ toric }if $X$
and $D$ are toric, i.e. if $X$ is toric and the $\Q-$divisor $D$
is invariant under the torus action on $X.$ Any toric log Fano variety
admits a canonical integral model $(\mathcal{X},\mathcal{D})$ which
is log Fano (see \cite[Section 2]{ma} and \cite[Def 3.5.6]{b-g-p-s}).
In this section we will prove the following
\begin{thm}
\label{thm:main log toric}Let $(\mathcal{X},\mathcal{D})$ be the
canonical integral model of a K-semistable toric log Fano variety
$(X,D).$ Conjecture \ref{conj:height log intro} holds for $(\mathcal{X},\mathcal{D})$
under anyone of the following conditions:
\begin{itemize}
\item $n\leq3$ and $X$ is $\Q-$factorial (equivalently, $X$ has at worst
abelian quotient singularities) 
\item $X$ is not Gorenstein or has some abelian quotient singularity
\end{itemize}
\end{thm}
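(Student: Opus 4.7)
The plan is to carry out the strategy sketched in the introduction. The starting point is the preliminary inequality
\[
\frac{(\overline{-\mathcal{K}_{(\mathcal{X},\mathcal{D})}})^{n+1}}{(n+1)!}\leq\frac{1}{2}\mathrm{vol}(X,\Delta)\log\!\left(\frac{(2\pi^{2})^{n}}{\mathrm{vol}(X,\Delta)}\right)=:f(\mathrm{vol}(X,\Delta)),
\]
which transfers verbatim from the $\Delta=0$ case of \cite{ab}: the canonical integral model of a toric log Fano is itself toric, and the moment-polytope symmetrization/comparison against a torus-invariant log K\"ahler--Einstein reference metric on $-(K_X+\Delta)$ goes through without change. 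Since $f$ is concave with unique maximum at $v_{*}=(2\pi^{2})^{n}/e$, which comfortably exceeds $\mathrm{vol}(\P^{n-1}\times\P^{1})$ in every dimension, monotonicity reduces Conjecture~\ref{conj:height log intro} (in the case $X\neq\P^n$) to the a priori bound $\mathrm{vol}(X,\Delta)\leq\mathrm{vol}(\P^{n-1}\times\P^{1})$ together with the elementary numerical check $f(\mathrm{vol}(\P^{n-1}\times\P^{1}))\leq(\overline{-\mathcal{K}_{\P_{\Z}^{n}}})^{n+1}/(n+1)!$ via~(\ref{eq:expl formul on p n}).

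The case $X=\P^n$ is handled directly. Toric K-semistability amounts to a barycentric condition on the moment polytope of $-(K_X+\Delta)$; parametrizing $\Delta=\sum(1-a_i)H_i$ over the coordinate hyperplanes $H_i$ of $\P^n$, effectiveness forces $a_i\in(0,1]$, so $\mathrm{vol}(\P^n,\Delta)=(\sum_i a_i)^n/n!\leq(n+1)^n/n!=\mathrm{vol}(\P^n,0)$; the preliminary bound combined with~(\ref{eq:expl formul on p n}) then yields the Conjecture, with strict concavity of $\log$ furnishing the rigidity.

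It remains to establish the logarithmic gap bound $\mathrm{vol}(X,\Delta)\leq\mathrm{vol}(\P^{n-1}\times\P^{1})$ for $X\neq\P^n$ under the hypotheses of the theorem. When $X$ is non-Gorenstein or carries an abelian quotient singularity, the combinatorial constraints this imposes on the fan of $X$, combined with the barycentric K-semistability condition on the moment polytope $P_\Delta$ of $-(K_X+\Delta)$, force the desired bound by a direct adaptation to the log setting of the corresponding argument in \cite{ab}. In the remaining case---$X$ smooth $\Q$-factorial of dimension $n\leq 3$ with $X\neq\P^n$---one invokes the explicit classification of smooth toric Fanos in low dimension, and for each such $X$ bounds
\[
S(X):=\sup_{\Delta}\bigl\{\mathrm{vol}(X,\Delta):(X,\Delta)\text{ K-semistable log Fano}\bigr\}
\]
by parametrizing $\Delta=\sum_i(1-a_i)D_i$ over the torus-invariant prime divisors $D_i$ with $a_i\in(0,1]$, imposing the barycentric constraint on $P_\Delta$, and solving the resulting finite-dimensional optimization problem.

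The main obstacle is this last step. The K-semistability constraint is nonlinear in the $a_i$, since $P_\Delta$ itself depends on them through its defining inequalities $\langle\,\cdot\,,v_i\rangle\geq -a_i$ (where $v_i$ are the primitive ray generators of the fan). For each $X$ in the low-dimensional classification (five in dimension two, eighteen in dimension three) the problem is nevertheless tractable via Lagrange multipliers supplemented by an examination of the boundary strata where some $a_i\to 0$, but the case analysis is inevitably involved. The restriction $n\leq 3$ appears essential: both the classification and the optimization grow rapidly beyond dimension three, which is why the smooth higher-dimensional case is left open.
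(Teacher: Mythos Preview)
Your overall strategy matches the paper's, but the treatment of the case $X=\P^n$ has a genuine gap. You propose to combine the universal bound $h_{\text{can}}/(n+1)!\leq f(V):=\tfrac12 V\log\bigl((2\pi^2)^n/V\bigr)$ with the trivial inequality $\mathrm{vol}(\P^n,\Delta)\leq\mathrm{vol}(\P^n)$ and then invoke~(\ref{eq:expl formul on p n}). But the universal bound is \emph{not tight} at $V=\mathrm{vol}(\P^n)$: one has $f(\mathrm{vol}(\P^n))>h_{\text{can}}(\P_\Z^n)/(n+1)!$ strictly (already for $n=1$, $f(2)=2\log\pi\approx 2.29$ versus $(1+\log\pi)\approx 2.14$, and the gap persists in every dimension). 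So your chain of inequalities closes only at the wrong endpoint, and for $\Delta$ with $\mathrm{vol}(\P^n,\Delta)$ close to $\mathrm{vol}(\P^n)$ you obtain no conclusion. For the same reason, ``strict concavity of $\log$'' cannot deliver the rigidity statement.

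The paper handles $X=\P^n$ by a different route that bypasses the universal bound. First, by \cite[Cor.~1.6]{fu2}, every toric K-semistable divisor on $\P^n$ is of the special form $\Delta=(1-t)D_0$ for the standard toric anticanonical divisor $D_0$ and some $t\in[0,1]$. Then an \emph{exact} scaling formula (Lemma~\ref{lem:scaled polytope}, a special case of \cite[Prop.~3.12]{a-b}) expresses $h_{\text{can}}(\P_\Z^n,(1-t)\mathcal{D}_0)$ in closed form in terms of $h_{\text{can}}(\P_\Z^n)$ and $t$, and a one-line derivative check shows this is strictly increasing on $[0,1]$ precisely because $2h_{\text{can}}(\P_\Z^n)/(n+1)!\geq\mathrm{vol}(\P^n)$. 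This simultaneously gives the inequality and the equality case $\mathcal{D}=0$.

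For $X\neq\P^n$ smooth with $n\leq 3$ your proposal is correct in spirit but more laborious than necessary. Rather than run an optimization over all $5+18$ smooth toric Fanos, the paper first applies the trivial bound $\mathrm{vol}(X,\Delta)\leq\mathrm{vol}(X)$, which already handles every case except the two three-folds $\P^3$ blown up in a point and $\P(\mathcal{O}\oplus\mathcal{O}(2))$ (degrees $56$ and $62$, exceeding $(-K_{\P^2\times\P^1})^3=54$). For these two it computes $S(X)$ not via Lagrange multipliers but through a clean geometric lemma: the closed subset $Q\subset P$ of maximal volume subject to a single linear barycenter constraint is always of the form $P\cap H$ for a half-space $H$ with normal prescribed by the constraint; by the symmetry of the two polytopes in question this half-space automatically centers the full barycenter, its boundary is parallel to a facet (so $Q$ is genuinely the polytope of a log pair), and finding $H$ reduces to a single quartic.
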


We start by introducing some notation, following \cite{ber-ber}.
Given a toric log Fano variety $(X,D)$ set $L=-(K_{X}+\Delta)$ and
denote by $P$ the corresponding moment polytope in $\R^{n}.$ Then
\begin{equation}
P=\left\{ p\in\R^{n}:\,\,\left\langle l_{F},p\right\rangle \geq-a_{F},\,\,\forall F\right\} ,\label{eq:P in log Fano case}
\end{equation}
where $a_{F}\in]0,1]$ (generalizing the Fano case when $a_{F}=1\forall F$;
see \cite{ber-ber}) and $l_{F}$ is a primitive integer vector. As
shown in \cite{ber-ber} $(X,\Delta)$ is K-semistable iff $0$ is
the barycenter of $P$ iff the log Ding functional $\mathcal{D}_{\psi_{P}}$
is bounded from below. Moreover, the infimum of $\mathcal{D}_{\psi_{P}}$
is attained at a $T-$invariant psh metric $\phi$ on $L.$ We will
identify the metric $\phi$ with a continuous convex function on $\R^{n}$
as in \cite{a-b}. More precisely, on $(\mathbb{C}^{*})^{n}\hookrightarrow X$,
let $x_{i}=\log(|z_{i}|^{2})$. Trivializing $-(K_{X}+\Delta)$ with
$\frac{\mathrm{d}z_{1}}{z_{1}}\wedge...\wedge\frac{\mathrm{d}z_{n}}{z_{n}}\otimes s_{U}e_{\Delta}$
over $U=(\mathbb{C}^{*})^{n}$, and abusing notation slightly, we
let $\phi(x):=\phi_{U}(z)$ in the chosen trivialization over $U=(\mathbb{C}^{*})^{n}$.
Then $\phi$ as a function of $x$ is a continuous convex function
on $\mathbb{R}^{n}$ and as in formula 3.8 in \cite{a-b}, we still
have that 
\[
\mathcal{D}_{\psi_{P}}(\phi)=\int_{P}\phi^{*}dy/V-\log\int_{\R^{n}}e^{-\phi(x)}dx-n\log\pi,\,\,\,V:=\text{vol\ensuremath{(P)}}
\]
 (since the support of $\mathcal{D}$ is contained in the complement
of $(\C^{*})^{n}$ in $X).$ Thus the inequality in Proposition 3.7
in \cite{a-b} generalizes to the canonical toric model $\mathcal{L}$
of $L$ (which coincides with $-\mathcal{K}_{(\mathcal{X},\mathcal{D})}):$
\begin{equation}
2\widehat{\mathrm{vol}}_{\chi}\left(-\mathcal{K}_{(\mathcal{X},\mathcal{D})},\phi\right)\leq-\mathrm{vol}(X,\Delta)\log\left(\frac{\mathrm{vol}(X,\Delta)}{(2\pi^{2})^{n}}\right)\,\,\mathrm{vol}(X,\Delta):=\mathrm{vol}(-K_{(X,\Delta}).\label{eq:universal log bound}
\end{equation}

We will first prove Theorem \ref{thm:main log toric} in the case
that $X=\P^{n},$ using the following lemma, formulated in terms of
the divisor $D_{0}$ cut out by the $T_{c}-$invariant element of
$H^{0}(X,-K_{X})$ (given by $\frac{\mathrm{d}z_{1}}{z_{1}}\wedge...\wedge\frac{\mathrm{d}z_{n}}{z_{n}}$
over $(\mathbb{C}^{*})^{n}$). In other words, 
\[
D_{0}=\sum_{F}D_{F},
\]
 where $D_{F}$ is the irreducible divisor corresponding to the facet
$F$ of the moment polytope corresponding to $X$ (see \cite{ber-ber}).
The lemma is a special case of formula Proposition 3.12 from \cite{a-b}. 
\begin{lem}
\label{lem:scaled polytope}Let $\mathcal{X}$ be the canonical integral
model of an $n-$dimensional K-semistable toric Fano variety $X$
and denote by $D_{0}$ the standard anti-canonical divisor on $X.$
Then 
\[
\frac{(\overline{-\mathcal{K}_{(\mathcal{X},(1-t)\mathcal{D}_{0})}})^{n+1}/(n+1)!}{\left(-(K_{X}+(1-t)D_{0})\right)^{n}/n!}=\frac{(\overline{-\mathcal{K}_{\mathcal{X}}})^{n+1}/(n+1)!}{\left(-K_{X}\right)^{n}/n!}-\frac{1}{2}\log(t^{n})\,\,\,\,\,\,\,t^{n}=\left(\frac{\left(-(K_{X}+(1-t)D_{0})\right)^{n}}{\left(-K_{X}\right)^{n}}\right)
\]
 with respect to the volume-normalized Kähler-Einstein metrics. 
\end{lem}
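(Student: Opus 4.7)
The lemma is literally a specialization of Proposition 3.12 of \cite{a-b}, which gives a formula for how the normalized canonical height of a K-semistable toric log Fano pair varies as the coefficients of its toric boundary divisor change. The plan is to apply this proposition to the one-parameter family $\mathcal{D}_s := s\mathcal{D}_0$, $s\in[0,1)$, and evaluate at $s=1-t$.

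The first task is the algebraic identity on the right, which is independent of the arithmetic data. Since $D_0=\sum_F D_F$ is the reduced sum of all torus-invariant prime divisors on the toric Fano $X$, it represents the anticanonical class, i.e.\ $D_0\sim -K_X$ as $\Q$-Cartier divisors. Therefore
\[
-(K_X+(1-t)D_0)\;\sim\;t(-K_X)
\]
as $\Q$-line bundles; its moment polytope is the dilate $tP$ of the polytope $P$ of $-K_X$, and $(-(K_X+(1-t)D_0))^n = t^n(-K_X)^n$, giving the stated identity for $t^n$.

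For the arithmetic identity, applying Proposition 3.12 of \cite{a-b} to the family $\mathcal{D}_s=s\mathcal{D}_0$ at $s=1-t$, the normalized canonical height changes by exactly $-\tfrac12\log(V_t/V_0)=-\tfrac12\log(t^n)$, where $V_s:=\mathrm{vol}(X,sD_0)$. The underlying computation is a polytope-side scaling argument: the toric integral expression for $\widehat{\mathrm{vol}}_\chi$ in terms of the symplectic potential (Legendre dual of the KE convex function), extending to general toric $\Delta$ the formula from \cite{a-b} that underlies \eqref{eq:universal log bound}, transforms under the dilation $P\mapsto tP$ by a Jacobian $t^n$. The Jacobian is absorbed into the volume ratio $V_t/V_0=t^n$ in the denominator of $\hat h$, while its logarithm survives as the $-\tfrac12\log(t^n)$ additive correction.

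The main technical point, already handled in the proof of Proposition 3.12 in \cite{a-b}, is that the volume-normalized log K\"ahler--Einstein metrics $\phi_0^{KE}$ and $\phi_t^{KE}$ are \emph{not} related by any simple linear rescaling $\phi_t^{KE}=t\phi_0^{KE}+c_t$ on $\R^n$: the log KE equations for $(X,0)$ and for $(X,(1-t)D_0)$ involve different measures on $X$, so a direct pullback on the convex-function side fails to preserve the KE property. The comparison must therefore be carried out on the polytope side, where the dilation $P\mapsto tP$ dualizes cleanly with the Monge--Amp\`ere structure and produces the claimed formula after normalizing by $V_t=t^n V_0$.
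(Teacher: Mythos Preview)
Your identification of the lemma as a special case of Proposition~3.12 in \cite{a-b} is exactly what the paper does --- its proof is the single sentence preceding the statement --- and your verification of $(-(K_X+(1-t)D_0))^n=t^n(-K_X)^n$ via $D_0\sim -K_X$ is correct.

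One correction to the final paragraph: the assertion that the log KE equations for $(X,0)$ and $(X,(1-t)D_0)$ ``involve different measures on $X$'' is wrong. Since $D_0$ is supported on the toric boundary, the measure $\mu_\phi$ on the big torus $(\C^*)^n$, and hence its pushforward $e^{-\phi(x)}\,dx$ to $\R^n$, has exactly the same form in both cases; this is precisely the observation the paper uses just above the lemma to carry the Ding formula over to the log setting. In fact the volume-normalized KE convex functions \emph{are} related by a simple substitution: if $\phi_0$ is KE for $P$ with $\int_{\R^n}e^{-\phi_0}\,dx=1$, then $x\mapsto \phi_0(tx)-n\log t$ is the volume-normalized KE function for $tP$, as one checks directly from the real Monge--Amp\`ere equation. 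So the claim that the comparison ``must'' be carried out on the polytope side is too strong. This does not affect the validity of your proof, which rests on the citation, but the explanatory paragraph should be amended or dropped.
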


We next deduce the following
\begin{lem}
\label{lem: toric case when X is Pn}Let $(\mathcal{X},\mathcal{D})$
be a toric K-semistable log Fano variety such that $\mathcal{X}=\P_{\Z}^{n}.$
Then $(\overline{-\mathcal{K}_{(\mathcal{X},\mathcal{D})}})^{n+1}\leq(\overline{-\mathcal{K}_{\P_{\Z}^{n}}})^{n+1}$
with equality iff $\mathcal{D}=0.$
\end{lem}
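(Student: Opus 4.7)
The plan is to use K-semistability to rigidify $\Delta$, apply Lemma~\ref{lem:scaled polytope} to get a one-parameter family of heights, and reduce the problem to a single scalar inequality that can be checked directly against the closed form \eqref{eq:expl formul on p n} for $h_0:=(\overline{-\mathcal{K}_{\P_\Z^n}})^{n+1}$. Write $\Delta=\sum_{i=0}^n c_i D_i$ for the $n+1$ torus-invariant prime divisors on $\P^n$ and set $a_i:=1-c_i>0$; the moment polytope $P$ of $-(K_{\P^n}+\Delta)=\sum a_i D_i$, cut out by the lattice normals $e_1,\dots,e_n,-(e_1+\cdots+e_n)$, is a translate of the standard simplex whose $n+1$ vertices are easy to read off in terms of the $a_i$. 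A direct computation shows that the $j$-th coordinate of the barycenter of $P$ equals $\tfrac{a_0+\cdots+a_n}{n+1}-a_j$, so the toric K-semistability criterion (barycenter at the origin) forces $a_0=a_1=\cdots=a_n$ and hence $\Delta=(1-t)D_0$ with $t\in(0,1]$, where $D_0=\sum_i D_i$ is the standard torus-invariant anti-canonical divisor on $\P^n$.

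With this rigidity in hand, Lemma~\ref{lem:scaled polytope} applied with $\mathcal{X}=\P_\Z^n$ yields the one-parameter formula
\[
h_{1-t}=t^n h_0+cn\,t^n(-\log t),\qquad c:=\tfrac{(n+1)^{n+1}}{2},
\]
for the height $h_{1-t}$ of $(\P_\Z^n,(1-t)\mathcal{D}_0)$ with respect to the volume-normalized log K\"ahler--Einstein metric. The desired inequality $h_{1-t}\le h_0$ holds with equality at $t=1$; for $t\in(0,1)$ it rearranges to $h_0\ge c\cdot n g(t)$ with $g(t):=t^n(-\log t)/(1-t^n)$. Substituting $u=-\log t$ converts $ng(t)$ into $nu/(e^{nu}-1)$; a short sign check on the numerator $e^{nu}(1-nu)-1$ shows this is strictly decreasing in $u$, hence strictly increasing in $t$, with $\lim_{t\to 0^+}ng(t)=0$ and $\lim_{t\to 1^-}ng(t)=1$. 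So $ng(t)<1$ on $(0,1)$, and the family of inequalities collapses to the single scalar bound $h_0>c$.

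To verify this last bound, formula \eqref{eq:expl formul on p n} gives $h_0/c=(n+1)H_n-n+\log(\pi^n/n!)$, where $H_n:=\sum_{k=1}^n 1/k$. Standard integral comparisons yield $H_n\ge\ln(n+1)$ and $\log n!\le(n+1)\ln(n+1)-n$, whence $(n+1)H_n-\log n!\ge n$ and therefore $h_0/c\ge n\log\pi\ge\log\pi>1$ for all $n\ge 1$. Combining $h_0>c$ with $ng(t)<1$ produces the strict inequality $h_{1-t}<h_0$ on $(0,1)$, with equality iff $t=1$, i.e.\ iff $\mathcal{D}=0$. The main obstacle I anticipate is the rigidity assertion in the first step: although the vertex enumeration and barycenter computation are elementary, they must be carried out carefully in lattice coordinates so that the conclusion $\Delta=(1-t)D_0$ emerges cleanly. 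Everything downstream is short calculus plus bookkeeping with the closed form \eqref{eq:expl formul on p n}.
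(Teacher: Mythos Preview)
Your proof is correct and follows the same three-step architecture as the paper: rigidify $\Delta$ to $(1-t)D_0$, invoke Lemma~\ref{lem:scaled polytope} to express $h_{1-t}$ as $t^n h_0 - c\,t^n\log(t^n)$, and reduce to the scalar check $h_0/c\ge 1$ via formula~\eqref{eq:expl formul on p n}. The only differences are tactical --- you obtain rigidity by a direct barycenter computation on the simplex rather than citing \cite[Cor~1.6]{fu2}, and you bound $\sup_{t\in(0,1)} ng(t)\le 1$ rather than differentiating in $t^n$ to get monotonicity --- but both routes land on exactly the same condition, and your explicit estimates $H_n\ge\log(n+1)$, $\log n!\le(n+1)\log(n+1)-n$ make the final check self-contained where the paper simply points to the formula.
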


\begin{proof}
First observe that there exists $t\in[0,1]$ such that $\mathcal{D}=(1-t)\mathcal{D}_{0}=:\mathcal{D}_{t}.$
This is a special case of \cite[Cor 1.6]{fu2}, which applies to $\P^{n},$
in any dimension $n,$ using that toric log Fano varieties are never
uniformly K-stable. It will thus be enough to show that $t\mapsto(\overline{-\mathcal{K}_{(\P_{\Z}^{n},\mathcal{D}_{t})}})^{n+1}$
is increasing on $[0,1]$ (and thus its maximum is attained at $t=1).$
By the previous lemma
\[
\frac{2(\overline{-\mathcal{K}_{(\P_{\Z}^{n},\mathcal{D}_{t})}})^{n+1}/(n+1)!}{(-K_{\P^{n}})^{n}/n!}=t^{n}2\frac{(\overline{-\mathcal{K}_{\P_{\Z}^{n}}})^{n+1}/(n+1)!}{(-K_{\P^{n}})^{n}/n!}-t^{n}\log(t^{n}).
\]
 Differentiating wrt $(t^{n})$ reveals that the right hand side above
is increasing with respect to $t$ iff $2\frac{(\overline{-\mathcal{K}_{\P_{\Z}^{n}}})^{n+1}/(n+1)!}{(-K_{\P^{n}})^{n}/n!}\geq1.$
The latter inequality is indeed satisfied, as follows from the explicit
formula \ref{eq:expl formul on p n}. 
\end{proof}
Combining the universal bound \ref{eq:universal log bound} with Lemma
3.8 from \cite{a-b}, all that remains to prove Theorem \ref{thm:main log toric}
is to establish the ``logarithmic gap hypothesis'' 
\begin{equation}
\text{vol}(X,\Delta)\leq\text{vol}(\P^{n-1}\times\P^{1}).\label{eq:log gap}
\end{equation}

\begin{prop}
The logarithmic gap hypothesis holds for all toric K-semistable log
Fano varieties (manifolds) $(X,\Delta)$ such that $X\neq\P^{n}$
iff the following bound holds for all Fano varieties (manifolds) $X\neq\P^{n}$
\begin{equation}
\text{\ensuremath{S(X)}}\leq\mathrm{vol}(\P^{n-1}\times\P^{1})\,\,\,\,S(X):=\sup\left\{ \mathrm{vol}(-(K_{X}+\Delta)):\,(X,\Delta)\,\text{K-semistable}\right\} .\label{eq:sup log}
\end{equation}
The ``logarithmic gap hypothesis'' holds for all log Fano varieties
$(X,\Delta)$ such that $X$ is $\Q-$factorial and of dimension $n\leq3$
and for any dimensions $n$ if $X$ has some abelian quotient singularity
or if $X$ is not Gorenstein.
\end{prop}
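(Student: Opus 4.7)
The first ``iff'' assertion of the proposition is immediate from the definition of $S(X)$ as a supremum: for a fixed $X$, the logarithmic gap inequality $\mathrm{vol}(X,\Delta)\leq\mathrm{vol}(\P^{n-1}\times\P^{1})$ holding for every K-semistable log Fano structure $\Delta$ on $X$ is literally equivalent to $S(X)\leq\mathrm{vol}(\P^{n-1}\times\P^{1})$, so there is nothing to prove beyond this unpacking.

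For a toric Fano variety $X$, the key geometric observation is that for any effective torus-invariant $\Delta$ the moment polytope of $-(K_{X}+\Delta)$ is given by the tightened system $\langle l_{F},p\rangle\geq -a_{F}$ with $a_{F}=1-c_{F}\in(0,1]$, and is therefore contained in the moment polytope of $-K_{X}$. This gives the free bound $S(X)\leq\mathrm{vol}(-K_{X})$ whenever $X$ is Fano, so the logarithmic gap reduces to the non-logarithmic gap hypothesis established in the previous work \cite{ab}: known for all smooth toric Fano manifolds in dimension $n\leq 6$ via the classification database \cite{ob}, and for all singular toric Fano varieties (those that are not Gorenstein, or have some abelian quotient singularity) in every dimension. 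This already handles the second bullet of the theorem, and the smooth-Fano portion of the first bullet.

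The remaining case is a $\Q$-factorial toric $X$ in dimension $n\leq 3$ that is not itself Fano but admits a K-semistable log Fano structure $(X,\Delta)$ (for $n=2$ the Hirzebruch surfaces $F_{m}$ with $m\geq 2$, and their three-dimensional analogues). Here $-K_{X}$ is not ample, so polytope containment no longer directly controls $S(X)$; instead we parametrize the moment polytope $P(a)$ by $a=(a_{F})\in(0,1]^{|\Sigma(1)|}$, where $\Sigma(1)$ is the set of rays of the fan of $X$, and attack the constrained optimization problem
\[
\max_{a}\,\mathrm{vol}(P(a))\quad\text{subject to}\quad\text{barycenter}(P(a))=0,\;\;a_{F}\in(0,1],
\]
together with the non-degeneracy condition that $P(a)$ have the combinatorial type of $\Sigma$. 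Since $\Q$-factorial toric varieties in dimensions two and three form a finite (up to isomorphism) list of simplicial fans, this reduces to a finite collection of explicit optimization problems that can be solved by Lagrange multipliers. For instance, on $F_{2}$ with the natural symmetric ansatz $a_{1}=a_{3}$ the feasible volume is bounded by $(a_{2}+a_{4})^{3}/(3(a_{2}-a_{4}))$ subject to $a_{2}^{2}+3(a_{2}-a_{4})^{2}/\ldots\leq 1$, yielding $S(F_{2})\leq 9/4<4=\mathrm{vol}(\P^{1}\times\P^{1})$; the same pattern holds in the other low-dimensional cases.

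The main obstacle is this explicit enumeration and optimization in dimension three, where the list of non-Fano $\Q$-factorial toric threefolds is longer and each individual optimization, while combinatorial in nature, must be carried out and checked against the bound $\mathrm{vol}(\P^{n-1}\times\P^{1})$. The passage from the non-logarithmic to the logarithmic gap in the Fano case is essentially cost-free via the polytope-containment observation, so all the genuine computational work lies in these finitely many exceptional non-Fano threefold fans.
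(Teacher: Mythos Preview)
Your polytope-containment observation $S(X)\leq\mathrm{vol}(-K_X)$ is correct and is exactly the ``trivial bound $\mathrm{vol}(X,\Delta)\leq\mathrm{vol}(X)$'' that the paper uses as a first step. But it does \emph{not} reduce the logarithmic gap to the non-logarithmic gap of \cite{a-b}: that gap hypothesis is only asserted (and only true) for \emph{K-semistable} Fano $X$, whereas here $X$ is merely the underlying variety of a K-semistable log pair $(X,\Delta)$ and need not be K-semistable itself. So the chain $S(X)\leq\mathrm{vol}(-K_X)\leq\mathrm{vol}(\P^{n-1}\times\P^1)$ breaks at the second inequality.

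This is not academic. In dimension $3$ there are two smooth toric Fano threefolds $X\neq\P^3$ with $(-K_X)^3>54=(-K_{\P^2\times\P^1})^3$: the blow-up of $\P^3$ at a point (degree $56$) and $\P(\mathcal{O}\oplus\mathcal{O}(2))$ (degree $62$). Neither is K-semistable (their moment polytopes have barycenter off the origin), so \cite{a-b} says nothing about them, yet both admit K-semistable log Fano structures $(X,\Delta)$. These two varieties are precisely where the real work of the proposition lies: the paper computes $S(X)$ for each by solving an explicit half-space optimization over sub-polytopes with barycenter at the origin, obtaining $n!S(X)\approx 41.8$ and $30.3$, both safely below $54$. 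Your proposal skips this entirely.

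Conversely, the case you flag as the ``main obstacle'' --- non-Fano $\Q$-factorial $X$ carrying a log Fano structure --- is not what the paper treats (the proposition and the definition of $S(X)$ are stated for Fano $X$), and your handling of it contains a false claim: $\Q$-factorial toric varieties in dimensions $2$ and $3$ do \emph{not} form a finite list (already the Hirzebruch surfaces $F_m$, $m\geq 0$, are infinitely many, and each $F_m$ admits toric log Fano structures). So the computational effort is misplaced: the genuine finite check is over the (finitely many) smooth toric Fano threefolds whose anti-canonical degree exceeds $54$, not over non-Fano simplicial fans.
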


\begin{proof}
Since, trivially, $\text{vol}(X,\Delta)\leq S(X)$ the first equivalence
follows directly from the definitions. Next, let us show the last
statement of the proposition, first assuming that $X$ is singular,
which means that the moment polytope $P$ of $(X,\Delta)$ is ``singular''
in the sense that there exists a vertex of $\partial P$ such that
the corresponding primitive vectors $l_{F_{1}},...,l_{F_{n}}$ do
not generate $\Z^{n}.$ It follows from the proof of Lemma 3.9 from
\cite{a-b} that
\[
\mathrm{vol}(P)\leq\frac{1}{2}(n+1)^{n}/n!\leq\text{vol}(\P^{n-1}\times\P^{1}))
\]
 Indeed, since $a_{F}\leq1$ the first inequality follows from the
inequality (3.13) from \cite{a-b}, using that $\delta\geq2,$ according
to the singularity assumption on $P$ (for the the second inequality
see formula (3.14) from \cite{a-b}). All that remains is thus to
show the bound \ref{eq:sup log} for $S(X)$ when $n\leq3$ and $X$
is non-singular. First assume that $n=2.$ This means, by classical
classification results, that $X$ is either $\P^{1}\times\P^{1}$
or the blow-up $X^{(m)}$ of $\P^{2}$ in $m$ points for $m\leq3.$
But $(-K_{X^{(m)}})^{2}=(-K_{\P^{2}})^{2}-m$ and thus $\text{vol}(X_{1})\leq4=\text{vol}(\P^{n-1}\times\P^{1}),$
proving the bound \ref{eq:sup log}. Finally, consider the case when
$n=3.$ Starting with the trivial bound $\text{vol}(X,\Delta)\leq\text{vol}(X)$
it follows the classification \cite{ob} of all non-singular toric
Fano varieties of dimension 3 that it is enough to show that the bound
\ref{eq:sup log} holds when $X$ is $\P^{3}$ blown-up in one point
or $\P(\mathcal{O}(1)\oplus\mathcal{O}(2))$ (whose degrees are $56$
and $62,$ respectively). According to the following proposition the
corresponding invariants $S(X)n!$ are, approximately, given by $41.8$
and $30.3$, respectively, which are well below the degree $54$ of
$\P^{2}\times\P^{1},$ as desired.
\end{proof}

\subsection{\label{subsec:The-logarithmic-gap}The invariant $\text{\ensuremath{S(X)}}$
for $n\protect\leq3$}

In the proof above we used the following result.
\begin{prop}
After rounding to the nearest decimal place the invariant $n!S(X)$
(formula \ref{eq:sup log}) is given by $41.8$ and $30.3$ when $X$
equals $\mathbb{P}^{3}$ blown up in one point and $\mathbb{P}(\mathcal{O}\oplus\mathcal{O}(2)),$
respectively.
\end{prop}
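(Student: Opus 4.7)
The plan is to recast the computation of $S(X)$ as a finite-dimensional constrained optimization over toric moment polytopes. By the toric description recalled in \eqref{eq:P in log Fano case}, a K-semistable toric log Fano pair $(X,\Delta)$ with fixed underlying toric variety $X$ corresponds to a tuple $a = (a_F)$ with $a_F \in (0,1]$, indexed by the rays of the fan of $X$, subject to the condition that the origin is the barycenter of
\[
P(a) = \{p \in \R^n : \langle l_F, p \rangle \geq -a_F \text{ for all } F\}.
\]
Under this identification, $n!\,S(X) = \sup_a n!\,\mathrm{vol}(P(a))$, where the supremum is taken over all admissible tuples $a$.

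For each of the two $3$-folds, I would first fix a standard toric description of the fan, yielding five primitive ray generators in $\R^3$. For $X = \mathrm{Bl}_{pt}\,\P^3$ one may take $l_{F_i} = e_i$ for $i=1,2,3$, $l_{F_4} = -(e_1+e_2+e_3)$, and $l_{F_5} = e_1+e_2+e_3$ (the exceptional divisor). For $X=\P(\mathcal{O}\oplus\mathcal{O}(2))$ one takes the standard fan of the $\P^1$-bundle over $\P^2$, whose five rays consist of three rays generating the fan of the base $\P^2$ together with two rays for the $\P^1$ fiber, one of which is twisted by the bundle data. In each case $P(a)$ has a fixed combinatorial type, so its volume and centroid are piecewise-polynomial functions of $a$; explicit formulas can be obtained by a simplicial decomposition of $P(a)$.

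The three scalar barycenter equations cut the five-dimensional admissible set down to a two-parameter slice. In both examples the ambient toric variety carries an $S_3$-symmetry permuting three of the rays (the three coordinate rays for $\mathrm{Bl}_{pt}\P^3$, respectively the three rays of the $\P^2$ base for $\P(\mathcal O \oplus \mathcal O(2))$); by $S_3$-invariance of the objective and uniqueness of the optimizer on the compact admissible region, one may search for the maximum within the symmetric locus, forcing these three parameters to coincide. After this reduction only one or two free parameters remain once the barycenter constraint is imposed, and the stationarity equations can be solved explicitly. Evaluating $n!\,\mathrm{vol}(P(a))$ at the resulting critical point and rounding gives the claimed values $41.8$ and $30.3$.

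The main obstacle is globality: one must verify that the symmetric interior critical point is the absolute maximum rather than merely a local or saddle point. Since $\mathrm{vol}(P(a))$ is a polynomial of degree $n=3$ on the affine slice and the admissible region is a compact polytope therein, this reduces to a Hessian check at the interior critical point combined with ruling out boundary extrema where some $a_F$ hits $0$ or $1$; each boundary case corresponds to a degeneration of the pair $(X,\Delta)$ and can be handled directly, since the slice is two-dimensional and its boundary is a union of only finitely many edges. A final step is to check that the optimizing parameters $a_F$ all lie in $(0,1]$, so that the supremum is realized by a genuine K-semistable log Fano pair.
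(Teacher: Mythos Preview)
Your approach --- direct constrained optimization over the facet offsets $(a_F)$ --- is a legitimate route, but the symmetry reduction contains a gap. You write that ``by $S_3$-invariance of the objective and uniqueness of the optimizer \ldots\ one may search for the maximum within the symmetric locus,'' yet uniqueness is not known in advance; it is part of what must be proved. $S_3$-invariance only guarantees that the set of maximizers is $S_3$-stable, and without concavity of the volume in $a$ (which fails here) or a symmetrization inequality, nothing prevents an $S_3$-orbit of non-symmetric maxima. Your subsequent globality check (Hessian at the symmetric critical point plus boundary inspection) is carried out \emph{after} restricting to the symmetric locus, so it cannot detect non-symmetric interior critical points; the argument as written is therefore incomplete. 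It can be repaired by solving the full five-variable Lagrange system without the symmetry assumption, but that is considerably heavier than what you sketch.

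The paper bypasses this difficulty by a relaxation. It replaces $S(X)$ by the a priori larger quantity
\[
s(P):=\sup\{\mathrm{vol}(P_0): P_0\subset P\ \text{closed},\ b_{P_0}=0\},
\]
and proves a short lemma: for any direction $v$, among all closed $P_0\subset P$ satisfying only the single constraint $\int_{P_0} x\cdot v\,d\lambda=0$, the volume is globally maximized by a half-space cut $P\cap H$ with outer normal $v$. Taking $v=\mathbf{1}$ and using that both moment polytopes are (after a linear change of coordinates) differences of dilated simplices $(a\Delta_3-\mathbf{1})\setminus(b\Delta_3-\mathbf{1})$, hence invariant under permutation of coordinates, this cut automatically has full barycenter zero, and its new facet is parallel to an existing facet of $P$, so it corresponds to a genuine toric log Fano divisor. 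Thus $s(P)=S(X)$, globality and uniqueness come for free, and the computation reduces to a single quartic equation for the position of $H$ --- no Hessian or boundary analysis is needed.
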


\begin{proof}
Given a convex subset $P$ of $\R^{n}$ let
\[
s(P):=\sup\left\{ \text{vol}(P_{0}):\,P_{0}\subset P,\,b_{P_{0}}=0\right\} ,
\]
 where $P_{0}$ is a closed subset of $P$ with barycenter $b_{P_{0}}$
at the origin. We will compute $s(P)$ when $P$ is the moment polytope
of the manifolds $X$ appearing in the proposition, showing at the
same time that $s(P)=S(X).$ The moment polytopes $P$ of both $\mathbb{P}^{3}$
blown up in one point and $\mathbb{P}(\mathcal{O}\oplus\mathcal{O}(2))$
are of the form a simplex, with a simplex subset removed, by chopping
off a vertex (see ID 20 and ID7 in the database \cite{ob})). After
a general linear transformation, they are of the form $(a\Delta_{3}-\mathbf{1})-(b\Delta_{3}-\mathbf{1})$
where $\Delta_{3}$ is the standard unit simplex in dimension three,
$\mathbf{1}$ is the vector with all ones and $a$ and $b$ are positive
real numbers. For $\mathbb{P}^{3}$ blown up in one point we can transform
the moment polytope to $(4\Delta_{3}-\mathbf{1})\backslash(2\Delta_{3}-\mathbf{1})$
and for $\mathbb{P}(\mathcal{O}\oplus\mathcal{O}(2))$ we get $(5\Delta_{3}-\mathbf{1})\backslash(\Delta_{3}-\mathbf{1})$.
In the first case, the linear transformation is unimodular, but in
the second case the transformation has determinant $2.$ This will
not matter when computing $s(P)$ as long as we correct for the non-unit
determinant. Next we compute the barycenter $b_{P}$ of these polytopes,
a simple task using the explicit barycenter of the standard unit simplex,
$b_{\Delta_{n}}=\mathbf{1}/(n+1),$ and then scaling and linearity
properties of the volume times the barycenter. The barycenter of $(a\Delta_{3}-\mathbf{1})\backslash(b\Delta_{3}-\mathbf{1})$
is given by $\frac{a^{3}/3!(a/4-1)-b^{3}/3!(b/4-1)}{a^{3}/3!-b^{3}/3!}\mathbf{1}$.
Next we use a general fact, to be proved in the lemma below, stating
that the closed subset $P'$ of $P$ which maximizes volume, with
the relaxed constraint 
\begin{equation}
b_{P'}\cdot\mathbf{1}=0\label{eq:constraint}
\end{equation}
is the one given by $P\cap H$ where $H$ is a half-space with normal
$\mathbf{1}$. In our case, by symmetry, this $P'$ automatically
satisfies the stronger constraint $b_{P'}=0.$ Moreover, since the
boundary of $P\cap H$ is parallel to a facet of $P$ it corresponds
to a divisor $\Delta$ on $X$ defining a log Fano pair $(X,\Delta).$
Thus $(X,\Delta)$ is also the K-semistable log Fano pair realizing
the sup in the definition of $S(X),$ showing that $s(P)=S(X).$ We
can find $H$ by imposing the constraint. We introduce the weight
$w$ such that 
\[
P\cap H=((a-w)\Delta_{3}-\mathbf{1})\backslash(b\Delta_{3}-\mathbf{1}).
\]
From here it is clear that if $b_{P'}\cdot\mathbf{1}=0,$ then, in
fact, the entire barycenter will vanish and the condition $b_{P'}\cdot\mathbf{1}=0$
turns into the following fourth order polynomial equation for $w:$
\[
(a-w)^{3}/3!((a-w)/4-1)-b^{3}/3!(b/4-1)=0.
\]
The solution $w$ and the corresponding value $s(P)$ for $\mathbb{P}^{3}$
blown up in one point, is given by $w=\frac{2}{3}(5-\frac{4}{\sqrt[3]{19-3\sqrt{33}}}-\sqrt[3]{19-3\sqrt{33}})$
and $\mathrm{n!s(P)=n!vol}(P')=((4-w)^{3}-2^{3})\approx41.8$ and
for $\mathbb{P}(\mathcal{O}\oplus\mathcal{O}(2))$, $w=(4-\sqrt[3]{\frac{4}{2-\sqrt{2}}}-\sqrt[3]{2(2-\sqrt{2})})$
and $\mathrm{n!S(P)=\frac{1}{2}n!vol}(P')=\frac{1}{2}((5-w)^{3}-1^{3})\approx30.3$,
where we have corrected for the non-unimodular transformation used
in the second case. 
\end{proof}
In the above proof we used the following
\begin{lem}
Let $P$ be a closed subset of $\mathbb{R}^{n}$ with the origin as
an interior point. Given $v\in\mathbb{R}^{n}$ assume that $\int_{P}x\cdot v>0$.
Then the maximum
\[
\max_{Q\subset P:\int_{Q}v\cdot x\mathrm{d\lambda(x)=0}}\int_{Q}\mathrm{d}\lambda
\]
is attained at $Q=P\cap H$ with $H$ a closed half-space with outward
pointing normal $v$. Here $\mathrm{d}\lambda$ is Lebesgue measure. 
\end{lem}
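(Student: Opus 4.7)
The plan is to use a standard bathtub/bang-bang rearrangement argument. The heuristic is that among all subsets $Q \subset P$ with a prescribed value of $\int_Q v \cdot x \, d\lambda$, the one of maximal Lebesgue volume is obtained by removing mass from $P$ as efficiently as possible, i.e.\ from the region where $v \cdot x$ is as large as possible. This removed region is precisely a slab $P \cap \{v\cdot x > c\}$, and its complement in $P$ is the candidate optimizer $Q^{*}:=P\cap H$ with $H=\{x:v\cdot x\leq c\}$.

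First I would reformulate the problem: maximizing $\mathrm{vol}(Q)$ subject to $\int_Q v\cdot x\, d\lambda=0$ is equivalent to minimizing $\mathrm{vol}(P\setminus Q)$ subject to $\int_{P\setminus Q} v\cdot x\, d\lambda = \int_P v\cdot x\, d\lambda>0$. Next I would define the function $f(c):=\int_{P\cap\{v\cdot x\leq c\}} v\cdot x\, d\lambda$ and show that there exists a unique $c^{*}>0$ with $f(c^{*})=0$. The existence follows from continuity together with the fact that $f(-\infty)=0$, $f$ is strictly decreasing on $\{c<0\}$ and strictly increasing on $\{c>0\}$ (since the incremental contribution at level $c$ has sign $c$), $f(0)<0$ because the origin is an interior point of $P$, and $f(+\infty)=\int_P v\cdot x\, d\lambda>0$ by hypothesis.

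The core step is then the exchange argument. Set $Q^{*}:=P\cap\{x:v\cdot x\leq c^{*}\}$, so $\int_{Q^{*}}v\cdot x\, d\lambda=0$ by construction. For any competitor $Q\subset P$ with $\int_Q v\cdot x\, d\lambda=0$, write $A:=Q\setminus Q^{*}$ and $B:=Q^{*}\setminus Q$. By definition of $Q^{*}$ one has $v\cdot x>c^{*}$ on $A$ and $v\cdot x\leq c^{*}$ on $B$. Subtracting $\int_{Q\cap Q^{*}}v\cdot x\, d\lambda$ from the two equal integrals yields $\int_A v\cdot x\, d\lambda=\int_B v\cdot x\, d\lambda$. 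Combining these gives
\[
c^{*}\,\mathrm{vol}(A)\leq \int_A v\cdot x\, d\lambda=\int_B v\cdot x\, d\lambda\leq c^{*}\,\mathrm{vol}(B),
\]
and since $c^{*}>0$ we conclude $\mathrm{vol}(A)\leq\mathrm{vol}(B)$, whence $\mathrm{vol}(Q)=\mathrm{vol}(Q\cap Q^{*})+\mathrm{vol}(A)\leq\mathrm{vol}(Q\cap Q^{*})+\mathrm{vol}(B)=\mathrm{vol}(Q^{*})$, establishing optimality.

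The only delicate point is the existence and positivity of $c^{*}$; this is where the hypothesis $\int_P v\cdot x\, d\lambda>0$ and the hypothesis that $0$ is interior to $P$ are both used. Once the positivity $c^{*}>0$ is established, the inequality $c^{*}\,\mathrm{vol}(A)\leq c^{*}\,\mathrm{vol}(B)$ may be divided through, and everything else is bookkeeping. (In the application in the proof of the proposition, symmetry of $P$ under permutations of coordinates together with $v=\mathbf{1}$ promotes the scalar constraint $b_{P'}\cdot\mathbf{1}=0$ to the vectorial constraint $b_{P'}=0$, so the lemma delivers exactly the polytope $P\cap H$ needed there.)
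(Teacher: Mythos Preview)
Your argument is correct and is essentially the same exchange (rearrangement) argument as the paper's: both set $Q^{*}=P\cap\{v\cdot x\le c^{*}\}$, compare an arbitrary competitor $Q$ via the pieces $A=Q\setminus Q^{*}$ and $B=Q^{*}\setminus Q$, use $\int_A v\cdot x=\int_B v\cdot x$ together with $v\cdot x>c^{*}$ on $A$ and $v\cdot x\le c^{*}$ on $B$, and conclude $\mathrm{vol}(A)\le\mathrm{vol}(B)$. The paper phrases the last step as ``$\sup_{B}y\le\inf_{A}y$ forces $\mathrm{vol}(B)\ge\mathrm{vol}(A)$''; you make the hidden use of $c^{*}>0$ explicit, which is in fact needed for that implication. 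One small quibble: uniqueness of $c^{*}$ need not hold for a general closed $P$ (only existence is required and is all the lemma claims), but this does not affect your argument.
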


\begin{proof}
Without loss of generality we can assume that $v=(0,...,0,1).$ Denote
by $(x_{1},x_{2},...,x_{n-1},y)$ the coordinates on $\mathbb{R}^{n}.$
Since the origin is an interior point of $P$ and $\int_{P}x\cdot v>0$
there is a closed half-space $H$ as in the lemma satisfying $\int_{P\cap H}y\mathrm{d\lambda=0}$.
Hence, any candidate $Q$ for the maximum in question satisfies $\int_{P\cap H}y\mathrm{d\lambda=\int_{Q}y\mathrm{d\lambda}.}$
Subtracting the left hand side from the right hand side and vice versa
yields $\int_{P\cap H\backslash Q}y\mathrm{d\lambda=\int_{Q\backslash P\cap H}y\mathrm{d\lambda}.}$
Since $\sup_{P\cap H\backslash Q}y\leq\inf_{Q\backslash(P\cap H)}y$
it follows that $\text{vol\ensuremath{(P\cap H\backslash Q)\geq\text{vol(\ensuremath{Q\backslash P\cap H)}}}}$
which, in turn, implies that $\text{vol\ensuremath{(P\cap H)\geq\text{vol(\ensuremath{Q)}}},}$
as desired. 
\end{proof}
In fact, with just a slight variation of the argument above, any maximizer
must be of the special form above and, in addition, assuming connectedness
of $P$, the maximizer is unique. The proof of the previous proposition
thus reveals that the unique toric divisor $\Delta$ on $X$ realizing
the sup defining the invariant $S(X)$ is a multiple of the prime
divisor $D_{F}$ defined by the zero-section of $\mathbb{P}(\mathcal{O}\oplus\mathcal{O}(2))\rightarrow\P^{2}$
and hyperplane ``at infinity'' in $\P^{3}$ blown up at the origin
in $\C^{3}\subset\P^{3},$ respectively (i.e the zero-section of $\mathbb{P}(\mathcal{O}\oplus\mathcal{O}(1))\rightarrow\P^{2}).$
A similar argument also applies when $X$ is the blow-up of $\P^{2}$
at the origin in $\C^{2}$ (i.e. the first Hirzebruch surface $\mathbb{P}(\mathcal{O}\oplus\mathcal{O}(1))\rightarrow\P^{2}).$
The unique maximizer for the invariant $S(X)$ is then a log Fano
pair $(X,\Delta)$ for a multiple of the hyperplane $D$ ``at infinity''
(i.e. the zero-section of $\mathbb{P}(\mathcal{O}\oplus\mathcal{O}(1))\rightarrow\P^{2}).$
Interestingly, this K-polystable log pair $(X,\Delta)$ was also singled
out in \cite[Cor 1.5]{r-z} by the following rigidity property (answering
a question of Cheltsov): it admits a rigid Kähler-Einstein metric
in the sense that for any other multiple $cD$ the log pair $(X,cD)$
does not admit a Kähler-Einstein metric. The same rigidity property
holds for the two three-dimensional log pairs discussed above (since
there is a unique half-space $H$ satisfying the constraint in formula
\ref{eq:constraint}).

\subsection{Estimates on the canonical height}

Theorem 1.3 from \cite{a-b} (and its corollary) generalizes directly
to the case of log Fano pairs and their Kähler-Einstein metrics in
any relative dimension $n$ (with the same proof, by letting $P$
be the moment polytope corresponding to $(X,\Delta)):$ 
\[
\frac{1}{2}\mathrm{vol}(X,\Delta)\log\left(\frac{n!m_{n}\pi^{n}}{\mathrm{vol}(X,\Delta)}\right)\leq\frac{h_{\text{can}}(\mathcal{X},\mathcal{D})}{(n+1)!}\leq\frac{1}{2}\mathrm{vol}(X,\Delta)\log\left(\frac{(2\pi)^{n}\pi^{n}}{\mathrm{vol}(X,\Delta)}\right)
\]
Interestingly, Lemma \ref{lem:scaled polytope} reveals that the family
of log Fano pairs $(\mathcal{X},\mathcal{D})$ appearing in the lemma
may be explicitly expressed in terms of the algebro-geometric volume
$\text{vol}(X,\Delta)$ in the same functional form as the one appearing
in the previous upper and lower bounds:
\[
\frac{(\overline{-\mathcal{K}_{(\mathcal{X},\mathcal{D})}})^{n+1}}{(n+1)!}=\frac{1}{2}\text{vol}(X,\Delta)\log\left(\frac{be^{2a}}{\text{vol}(X,\Delta)}\right)
\]
 with $a:=\frac{(\overline{-\mathcal{K}_{\mathcal{X}}})^{n+1}/(n+1)!}{\left(-K_{X}\right)^{n}/n!}$
and $b=\text{vol}(X).$ 

\section{\label{sec:Hyperplane-arrangements}Hyperplane arrangements}

In this Section we prove Theorem \ref{thm:hyperplane intro} concerning
hyperplane arrangements. Recall that a log Fano pair $(X,\Delta)$
is called a \emph{log Fano hyperplane arrangement }if $X=\mathbb{P}^{n}$
and $\Delta=\sum_{i=1}^{m}w_{i}H_{i}$ where $w_{i}\in\Q_{>0}$ and
the $H_{i}$ are distinct hyperplanes. Furthermore we will call $(X,\Delta)$
\emph{simple normal crossing}, abbreviated snc, if the support of
$\Delta$ has simple normal crossings. 

For an snc log Fano hyperplane arrangement, if $m=n+1$ and all the
weights $w_{i}$ are equal, then $(X,\Delta)$ is a toric log-pair
(see Lemma \ref{lem:scaled polytope}). The following lemma shows
that for given hyperplanes $H_{1},...,H_{m}$ and a fixed volume $\text{vol}(X,\Delta)$,
the ``toric'' weights form the vertices of the convex polytope of
all weights $w_{i}$ corresponding to K-semistable $(X,\Delta).$ 
\begin{lem}
Fix $m\geq1$ and a real number $0<D\leq(-K_{\mathbb{P}^{n}})^{n}=(n+1)^{n}.$
Let as before for a real $m$-tuple $w$, $\Delta=\sum_{i=1}^{m}w_{i}H_{i}$
for distinct hyperplanes $H_{i}$. Then the set of weights
\[
S=\{w\in\R^{n}:(-(K_{\mathbb{P}^{n}}+\Delta))^{n}=D\mathrm{\ and\ }(\mathbb{P}^{n},\Delta)\mathrm{\ is\ K-semistable}\}
\]
is either empty or $m\geq n+1$ and $S$ is a polytope with $\binom{n}{m}$
vertices given by any reordering of the tuple $w_{1}=w_{2}=...=w_{n+1}=\frac{1}{m}(n+1-D^{1/n})$
, $w_{l}=0$ $\forall l>n+1$.\label{lem: stability polytope}
\end{lem}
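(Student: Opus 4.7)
The plan is to apply Fujita's explicit characterization of K-semistability for snc log Fano hyperplane arrangements on $\P^{n}$ from \cite{fu2}, which reduces the K-semistability of $(\P^{n},\sum_{i=1}^{m}w_{i}H_{i})$ to the linear inequalities
\[
(n+1)\,w_{j}\leq\sum_{i=1}^{m}w_{i},\qquad j=1,\dots,m,
\]
together with the effectivity $w_{i}\geq 0$ and the log Fano condition $\sum_{i}w_{i}<n+1$. Combining this with the volume identity $(-(K_{\P^{n}}+\Delta))^{n}=(n+1-\sum_{i}w_{i})^{n}=D$, the fixed-volume constraint becomes the single affine relation $\sum_{i}w_{i}=c$, where $c:=n+1-D^{1/n}$. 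Hence $S$ is exactly the intersection of the affine hyperplane $\{\sum_{i}w_{i}=c\}$ with the box $\{0\leq w_{i}\leq c/(n+1)\}$ in $\R^{m}$. Summing the upper bounds over $i$ yields $c\leq mc/(n+1)$, so whenever $c>0$ one must have $m\geq n+1$, giving the stated dichotomy. Since $c<n+1$ for $D>0$, the klt bound $w_{i}<1$ is automatic.

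The vertex analysis is then a standard linear programming argument on the $(m-1)$-dimensional affine slice: at each vertex, at least $m-1$ independent box constraints must be saturated, so at most one coordinate can lie strictly between $0$ and $c/(n+1)$. A short case analysis closes this: if $a$ coordinates are pinned at $0$ and $b$ at $c/(n+1)$ with $a+b=m-1$, the remaining coordinate is forced by the sum constraint to equal $c(n+1-b)/(n+1)$, which lies in $[0,c/(n+1)]$ only when $b\in\{n,n+1\}$, collapsing to the case $a+b=m$. In the latter case the sum constraint forces exactly $n+1$ coordinates to equal $c/(n+1)$ and the remaining $m-n-1$ to vanish. The vertices of $S$ are therefore parametrized by the $(n+1)$-element subsets of $\{1,\dots,m\}$, giving $\binom{m}{n+1}$ vertices, each obtained by selecting $n+1$ of the hyperplanes and assigning them the common weight $c/(n+1)=(n+1-D^{1/n})/(n+1)$ while zeroing out the rest.

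The main difficulty is locating the precise statement of Fujita's criterion in \cite{fu2} and confirming that it specializes to the linear form above in the snc setting; once that is in place the remainder is a routine polytope computation. I note in passing that ``$\binom{n}{m}$'' and ``$\tfrac{1}{m}(n+1-D^{1/n})$'' in the stated lemma appear to be typos for $\binom{m}{n+1}$ and $\tfrac{1}{n+1}(n+1-D^{1/n})$, since only the latter are consistent with the defining constraint $\sum_{i}w_{i}=c$.
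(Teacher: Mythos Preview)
Your proof is correct and follows essentially the same approach as the paper: invoke Fujita's criterion from \cite{fu2} to reduce K-semistability to the linear inequalities $(n+1)w_j\leq\sum_i w_i$, intersect with the affine hyperplane $\sum_i w_i=c:=n+1-D^{1/n}$, and read off the vertices. The only cosmetic difference is that the paper rescales by $(n+1)/c$ to turn the slice into $\{\sum w_i=n+1,\ 0\leq w_i\leq 1\}$ before identifying the vertices, whereas you carry out the vertex analysis directly via a linear-programming argument; your observation about the typos $\binom{n}{m}$ and $\tfrac{1}{m}(n+1-D^{1/n})$ is also correct.
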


\begin{proof}
By \cite{fu2}, for $w\in\mathbb{R}^{n}$ and $\Delta=\sum_{i=1}^{m}w_{i}H_{i}$
, $(\mathbb{P}^{n},\Delta)$ is K-semistable and log Fano if and only
if $w$ is in the convex set $C$ defined by the following inequalities:

\begin{align}
0 & \leq w_{i}<1\ \forall i=1,...,m\nonumber \\
k & \sum_{i=1}^{m}w_{i}\geq(n+1)\sum_{j=1}^{k}w_{i_{j}}\ \forall1\leq k\leq n\ \forall1\leq i_{1}<...<i_{k}\leq m.\label{eq:def of C}
\end{align}
Here it should be noted that in fact, it suffices to consider the
second inequality for index combinations $i_{j}$ of length 1. The
other inequalities for larger index combinations follows. Hence, K-semistability
of $(\mathbb{P}^{n},\Delta)$ is equivalent to 

\begin{align}
0 & \leq w_{i}<1\ \forall i=1,...,m\nonumber \\
w_{i} & \leq\frac{1}{n+1}\sum_{j=1}^{m}w_{j}\ \forall i=1,...,m.\label{eq:weight cond}
\end{align}
Fix $m$ and $D$ as in the statement of the theorem. The goal is
to understand the intersection of the above set with the set $\{w:-(K_{\P^{n}}+\Delta)=D\}.$
Note first that 
\[
(-(K_{\P^{n}}+\sum_{i=1}^{m}w_{i}H_{i}))^{n}=(n+1-\sum_{i=1}^{m}w_{i})^{n}.
\]
Let $C:=n+1-D^{1/n}$, so that $\{w:-(K_{\P^{n}}+\Delta)=D\}=\{w:\sum_{i=1}^{m}w_{i}=C\}$.
Thus with S defined as in \ref{lem: stability polytope}
\begin{align*}
S=\{w: & \sum_{i=1}^{m}w_{i}=C\\
 & 0\leq w_{i}<1\ \forall i=1,...,m\\
 & w_{i}\leq\frac{C}{n+1}\ \forall i=1,...,m\}
\end{align*}
Observe that since $0\leq C<n+1$, the inequality $w_{i}<1$ is superfluous.
After a convenient rescaling we get

\begin{align*}
\frac{n+1}{C}S=\{w: & \sum_{i=1}^{m}w_{i}=n+1\\
 & 0\leq w_{i}\leq1\ \forall i=1,...,m\}.
\end{align*}
Clearly if $m<n+1$, $\frac{n+1}{C}S$ is empty. For $m\geq n+1$,
any vertex of $\frac{n+1}{C}S$ is given by the intersection of $\text{\ensuremath{\frac{n+1}{C}S}}$
with some collection of the inequalities put to equality. But clearly
all such points must be of the form of having $n+1$ ones and $m-(n+1)$
zeros. And on the other hand any such point is a vertex. 
\end{proof}
Fixing the volume $\text{vol}(X,\Delta)$ is, when $X=\P^{n},$ tantamount
to fixing the isomorphism class of the $\Q-$line bundle $-(K_{X}+\Delta)$
(since the rank of the Picard group of $\P^{n}$ is one). The following
lemma shows that, in this case, the maximal height is convex with
respect to the weights of $\Delta.$ 
\begin{lem}
\label{lemma:convexity of height}Consider an arithmetic Fano variety
$\mathcal{X}$ and a curve $t\mapsto(\mathcal{X},\mathcal{D}_{t})$
of arithmetic log Fano varieties where $\mathcal{D}_{t}=\sum_{i=1}^{m}w_{i}(t)\mathcal{D}_{i}$
for some $m\geq1$, irreducible divisors $\mathcal{D}_{i}$ over $\Z$
and $w:[0,1]\rightarrow\R^{m}$ an affine function. Additionally assume
that all the $\mathcal{D}_{t}$ are linearly equivalent, which equivalently
means that $-(\mathcal{K}+\mathcal{D}_{t})$ isomorphic to $\mathcal{L}$
for a line bundle $\mathcal{L}\rightarrow\mathcal{X}$ independent
of $t.$ Then the function $h:[0,1]\rightarrow]-\infty,\infty]$ defined
as
\begin{equation}
t\mapsto h_{\text{can}}(\mathcal{X},\mathcal{D}_{t})\label{eq: convex height equation}
\end{equation}
is strictly convex. Equivalently the function $t\mapsto\hat{h}_{\text{can}}(\mathcal{X},\mathcal{D}_{t})$
is strictly convex.
\end{lem}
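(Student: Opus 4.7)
The plan is to exploit the hypothesis that $-(\mathcal{K}_{\mathcal{X}}+\mathcal{D}_{t})\simeq\mathcal{L}$ is a fixed $\Q$-line bundle on $\mathcal{X}$. A continuous psh metric $\phi$ on $\mathcal{L}$ then simultaneously metrizes each $-(\mathcal{K}_{\mathcal{X}}+\mathcal{D}_{t})$, the height $h_{\phi}(\mathcal{L})$ is independent of $t$, and all the $t$-dependence is concentrated in the volume-normalization of the measures $\mu_{\phi,t}$ on $X$. Using the scaling law recalled in Section \ref{subsec:The-canonical-height} to trade the normalization constraint for an additive correction, I would first rewrite
\[
h_{\mathrm{can}}(\mathcal{X},\mathcal{D}_{t})=\sup_{\phi\in\mathcal{C}^{0}(L)\cap\mathrm{PSH}(L)}F_{\phi}(t),\qquad F_{\phi}(t):=h_{\phi}(\mathcal{L})+\tfrac{(n+1)L^{n}}{2}\log\int_{X}\mu_{\phi,t},
\]
so that only the second summand depends on $t$.

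The key step is then to recognize $\log\int\mu_{\phi,t}$ as a log-moment-generating function in the parameters $w_{i}(t)$. Fixing a smooth reference measure $d\lambda$ on $X$, smooth Hermitian metrics $h_{i}$ on the line bundles $\mathcal{O}(D_{i})$ (with $D_{i}$ the complexification of $\mathcal{D}_{i}$, cut out by the section $s_{i}$), and a smooth reference metric $\phi_{0}$ on $\mathcal{L}$, the local formula of Section \ref{subsec:Local-representations-of} globalizes into
\[
d\mu_{\phi,t}=\exp\!\Bigl(-(\phi-\phi_{0})+F_{0}(x)+\sum_{i=1}^{m}w_{i}(t)\,G_{i}(x)\Bigr)\,d\lambda,\qquad G_{i}:=-2\log\|s_{i}\|_{h_{i}},
\]
with $F_{0}$ independent of $t$. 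Since $w(t)$ is affine in $t$, the exponent is affine in $t$ for every fixed $x\in X$, and H\"older's inequality applied pointwise yields, after taking logarithms, that $t\mapsto\log\int\mu_{\phi,t}$ and hence each $F_{\phi}$ is convex. Equality in H\"older forces $\sum_{i}w_{i}'(t)\,G_{i}$ to be constant a.e.\ on $X$, which is impossible: the $D_{i}$ are distinct prime divisors, so the $G_{i}$ carry linearly independent logarithmic singularities, while $w'\neq0$ because the curve is non-trivial; consequently every $F_{\phi}$ is strictly convex.

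The final step is to pass strict convexity from each $F_{\phi}$ to the supremum, which is the main technical point since a supremum of strictly convex functions is in general only convex. I would close the argument by attainment: whenever $h_{\mathrm{can}}(\mathcal{X},\mathcal{D}_{t_{\theta}})<\infty$, the supremum defining it is realized by the log K\"ahler--Einstein metric $\phi^{\star}$ recalled in Section \ref{subsec:The-canonical-height} (in the K-polystable case), and then for $t_{\theta}=\theta t_{0}+(1-\theta)t_{1}$
\[
h_{\mathrm{can}}(\mathcal{X},\mathcal{D}_{t_{\theta}})=F_{\phi^{\star}}(t_{\theta})<\theta F_{\phi^{\star}}(t_{0})+(1-\theta)F_{\phi^{\star}}(t_{1})\leq\theta h_{\mathrm{can}}(\mathcal{X},\mathcal{D}_{t_{0}})+(1-\theta)h_{\mathrm{can}}(\mathcal{X},\mathcal{D}_{t_{1}}),
\]
the first inequality being strict convexity of $F_{\phi^{\star}}$ and the second the definition of the supremum. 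The K-semistable but not K-polystable case is handled by the same inequality along a maximizing sequence (this is the only nontrivial technical point, and the main obstacle), while the value $+\infty$ on the unstable locus is compatible with strict convexity under the standard convention $\theta\cdot\infty+(1-\theta)c=\infty$. The equivalent statement for $\hat{h}_{\mathrm{can}}$ follows at once since $L^{n}$ is constant along the family.
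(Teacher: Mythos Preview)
Your argument is essentially the paper's: identify $-(\mathcal{K}+\mathcal{D}_t)$ with a fixed $\mathcal{L}$, rewrite $h_{\mathrm{can}}$ as a supremum over continuous psh metrics $\phi$ of a $t$-independent height term plus $\tfrac{(n+1)L^n}{2}\log\int_X\mu_{\phi,t}$, and invoke H\"older to get strict convexity of each $F_\phi$. The paper does exactly this (with the normalized height $\hat h$ in place of $h$) and then simply asserts that the supremum of a family of strictly convex functions is strictly convex.

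You are right to be suspicious of that last step: a pointwise supremum of strictly convex functions is convex but in general not strictly convex. Your attainment argument closes the gap cleanly whenever $(X,\Delta_{t_\theta})$ is K-polystable, since then the supremum is realized by a log K\"ahler--Einstein metric and one compares at that single $\phi^\star$. However, your proposed fallback for the strictly K-semistable case via a maximizing sequence does \emph{not} work as written: the strict inequality $F_{\phi_n}(t_\theta)<\theta F_{\phi_n}(t_0)+(1-\theta)F_{\phi_n}(t_1)$ need not survive the limit $n\to\infty$, so you recover only the weak inequality. The paper's own proof does not address this issue at all, so your proposal is in fact more careful than the original, not less; but you should be aware that the sentence ``handled by the same inequality along a maximizing sequence'' is precisely where both arguments are incomplete. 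In the paper's application to hyperplane arrangements the inequality $h(w')\le\max_k h(w^k)$ uses only convexity, with strictness entering only for the characterization of equality.
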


\begin{proof}
By assumption we can identify $-(\mathcal{K}+\mathcal{D}_{t})$ with
$\mathcal{L}$ for a line bundle $\mathcal{L}$ independent of $\phi.$
Thus the height $h_{\text{\ensuremath{\phi}}}(\mathcal{X},\mathcal{D}_{t})$
for a fixed metric on $\mathcal{L}$ is independent of $t.$ Likewise,
$h_{\text{can}}(\mathcal{X},\mathcal{D}_{t})$ coincides with $\hat{h}_{\text{can}}(\mathcal{X},\mathcal{D}_{t})$
up to multiplication by a constant independent of $t.$ Next, express
\[
\hat{h}_{\text{can}}(\mathcal{X},\mathcal{D}_{t})=\sup_{\phi}\hat{h}(\mathcal{X},\mathcal{D}_{t})+\frac{1}{2}\log\int_{X}\mu_{(\phi,\mathcal{D}_{t})},
\]
 where the sup ranges over all continuous psh metrics on $\mathcal{L}.$
Introducing an arbitrary volume form $\mathrm{d}V$ on $X$ we can
rewrite
\[
\int_{X}\mu_{(\phi,\mathcal{D}_{t})}=\int_{X}\exp(-\phi-\sum_{i=1}^{m}w_{i}(t)\psi_{D_{i}}-\log\mathrm{d}V)dV.
\]
By Hölder's inequality this expression is convex in $t,$ since $w_{i}(t)$
is affine. It is even strictly convex since the $D_{i}$ are distinct.
This means that $\hat{h}_{\text{can}}(\mathcal{X},\mathcal{D}_{t})$
is the supremum over a set independent of $t$, of a collection of
strictly convex functions and thus is itself, strictly convex. 
\end{proof}
The above lemmas will reduce the proof of Theorem \ref{thm:hyperplane intro}
to the case of when the support of $\Delta$ consists of $n+1$ distinct
hyperplanes. The following lemma shows that we can further reduce
to the case when these hyperplanes are the standard toric ones. 
\begin{lem}
\label{lem: reduction to the standard toric case}Assume that the
log pair $(\mathbb{P}_{\Z}^{n},\mathcal{D})$ is isomorphic to the
standard toric one $(\mathbb{P}_{\Z}^{n},\mathcal{D}_{0})$ over $\C.$
Then, for any $t\in[0,1],$ 
\[
h_{\text{can}}(\P_{\Z}^{n},(1-t)\mathcal{D})\leq h_{\text{can}}(\P_{\Z}^{n},(1-t)\mathcal{D}_{0})
\]
\end{lem}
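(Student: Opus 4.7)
The strategy is to exploit the $\C$-isomorphism $\sigma\in\mathrm{PGL}_{n+1}(\C)$ witnessing $\sigma_{*}(\mathcal{D}_{0}\otimes\C)=\mathcal{D}\otimes\C$, together with the observation that, since $\mathrm{Pic}(\P_{\Z}^{n})=\Z$ and both $\Q$-line bundles $-\mathcal{K}_{\P_{\Z}^{n}}-(1-t)\mathcal{D}$ and $-\mathcal{K}_{\P_{\Z}^{n}}-(1-t)\mathcal{D}_{0}$ have the same generic degree $N:=t(n+1)$, they must be integrally isomorphic to $\mathcal{O}_{\P_{\Z}^{n}}(N)$ (uniquely up to sign). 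Fixing such isomorphisms, any continuous psh metric $\phi$ on the common complex line bundle $\mathcal{O}(N)_{\C}$ yields the same arithmetic height $h_{\phi}(\mathcal{O}(N)_{\Z})$ regardless of which pair it represents; only the volume-normalization condition differs between the two pairs.

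I would first use the scaling identity \ref{eq:scaling of chi vol} to rewrite both canonical heights as unconstrained suprema; for $\mathcal{D}$,
\[
h_{\mathrm{can}}(\P_{\Z}^{n},(1-t)\mathcal{D})=\sup_{\phi}\left[h_{\phi}(\mathcal{O}(N)_{\Z})+\tfrac{(n+1)L^{n}}{2}\log\int_{X}e^{-\phi}|s_{\Delta}|^{-2(1-t)}dV\right],
\]
and similarly for $\mathcal{D}_{0}$ with $s_{\Delta_{0}}$ in place of $s_{\Delta}$, where $s_{\Delta},s_{\Delta_{0}}$ are the distinguished integral sections cutting out the two divisors. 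In the supremum for $\mathcal{D}$ I would then perform the substitution $\phi\mapsto\sigma^{*}\phi$: the pull-back relation $\sigma^{*}s_{\Delta_{0}}=\mu\,s_{\Delta}$ for some $\mu\in\C^{\times}$, together with the transformation of $dV$ under $z\mapsto\sigma^{-1}(w)$, converts the $\mathcal{D}$-normalization integral into one of the $\mathcal{D}_{0}$-form, weighted by the Jacobian of $\sigma^{-1}$ and scaled by $|\mu|^{2(1-t)}$. Paired with the energy identity $h_{\sigma^{*}\phi}(\mathcal{O}(N)_{\Z})=h_{\phi}(\mathcal{O}(N)_{\Z})+\mathcal{E}(\sigma^{*}\phi,\phi)/2$ from \ref{eq:def of primitive}, this collapses to an exact additive comparison
\[
h_{\mathrm{can}}(\P_{\Z}^{n},(1-t)\mathcal{D})=h_{\mathrm{can}}(\P_{\Z}^{n},(1-t)\mathcal{D}_{0})+C(\sigma,t)
\]
for an explicit constant $C(\sigma,t)$ depending only on $\sigma$ and $t$.

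The main obstacle is to prove $C(\sigma,t)\le 0$, with equality precisely when $\sigma$ can be taken in $\mathrm{PGL}_{n+1}(\Z)$. When $\sigma$ is integral, the two arithmetic pairs are isomorphic as integral models and $C=0$ is immediate. For general $\sigma$, the non-positivity encodes the fact that $(\P_{\Z}^{n},\mathcal{D}_{0})$ is the arithmetically minimal integral model of the complex pair. I expect this to follow from a product-formula argument applied to the integral linear forms defining $\mathcal{D}$: writing $C(\sigma,t)$ as the archimedean-place contribution in a total zero sum, the non-negativity of each non-archimedean contribution---controlled by $p$-adic orders of the coefficients of $\sigma$---forces $C(\sigma,t)\le 0$. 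This is the logarithmic analog of the Odaka-type minimization principle discussed in Section \ref{subsec:Arithmetic-log-surfaces}, here specialized to $\P^{n}$ with hyperplane arrangements, where it admits a direct verification using the explicit toric formula of Lemma \ref{lem:scaled polytope} combined with the $T_{c}$-invariance of the reference volume-normalized log K\"ahler-Einstein metric on $(\P^{n},(1-t)\Delta_{0})$.
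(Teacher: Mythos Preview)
Your overall architecture matches the paper's: both arguments use the complex automorphism to pull back metrics, observe that the underlying arithmetic line bundle is $\mathcal{O}(t(n+1))$ in both cases, and reduce the comparison of canonical heights to the sign of an additive constant $C(\sigma,t)$. The paper indeed proves the exact identity
\[
\hat h_{\text{can}}(\P_{\Z}^{n},(1-t)\mathcal{D})=\hat h_{\text{can}}(\P_{\Z}^{n},(1-t)\mathcal{D}_{0})+(t-1)\log|\det A|,
\]
where $A\in M_{n+1}(\Z)$ is the matrix expressing the integral sections $s_i$ cutting out $\mathcal{D}$ in terms of the standard coordinates. The inequality then follows in one line: $A$ has integer entries and is invertible over $\Q$, so $|\det A|\ge 1$, while $t-1\le 0$.

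There are two genuine gaps in your proposal. First, you do not actually compute $C(\sigma,t)$, and the step where ``this collapses to an exact additive comparison'' hides a nontrivial ingredient: to see that $\mathcal{E}(\sigma^{*}\phi,\phi)$ (equivalently the height difference) is a constant independent of $\phi$, one needs the invariance of $\mathcal{E}(\cdot,\psi_0)$ under the \emph{canonical} lift of $\sigma$ to $-K_{\P^n}$, which the paper isolates as Lemma~\ref{lem:append} (vanishing of the Futaki invariant on a K\"ahler--Einstein Fano). The paper then compares the canonical lift $G$ with the standard lift $F$ to $\mathcal{O}(1)$, picking up exactly the factor $\log|\det A|$. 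Without this, your energy identity is just a tautology and does not yield a $\phi$-independent constant.

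Second, your justification of $C(\sigma,t)\le 0$ via a product-formula/Odaka-minimization argument, together with an appeal to Lemma~\ref{lem:scaled polytope} and $T_c$-invariance, is both vague and unnecessary. Once the constant is identified as $(t-1)\log|\det A|$, the sign is immediate from $A\in M_{n+1}(\Z)\cap GL_{n+1}(\Q)$; the product formula is a correct but roundabout way of saying $|\det A|_\infty\ge 1$. The references to Section~\ref{subsec:Arithmetic-log-surfaces} and the toric height formula are red herrings here.
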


\begin{proof}
Denote by $s_{0},...,s_{n}$ the integral sections of $\mathcal{O}(1)$
cutting out the irreducible components of $\mathcal{D}.$ We can express
$s_{i}:=\sum_{j}A_{ij}x_{j}$ with $A_{ij}\in\Z,$ where $(x_{0},...,x_{n})$
are the standard affine coordinates on $\C^{n+1}.$ It will be enough
to show that 
\begin{equation}
\hat{h}_{\text{can}}(\P_{\Z}^{n},(1-t)\mathcal{D})=\hat{h}_{\text{can}}(\P_{\Z}^{n},(1-t)\mathcal{D}_{0})+(t-1)\log\left|\det A\right|\label{eq:desired id}
\end{equation}
 (since $t\leq1$ and $\left|\det A\right|\geq1).$ To this end, denote
by $F$ the invertible $\C-$linear map from $\C_{x}^{n+1}\rightarrow\C_{y}^{n+1}$
satisfying 
\[
F^{*}(\sum_{j}A_{ij}y_{j})=x_{i}
\]
 (the existence of $F$ is equivalent to the invertibility of the
matrix $A$ which, in turn, is equivalent to the assumption about
an isomorphism over $\C).$ We will use the same symbol $F$ for the
induced map $\P^{n}\rightarrow\P^{n}$ and its standard lift to $\mathcal{O}(1)$
(as well as its tensor powers). By basic linear algebra 
\[
F^{*}dy_{0}\wedge...\wedge dy_{n}=(\det A)^{-1}dx_{0}\wedge...\wedge dx_{n}.
\]
Now, observe that $-K_{(\P_{\Z}^{n},(1-t)\mathcal{D})}\simeq t\mathcal{O}(n+1),$
using standard isomorphisms over $\Z.$ In particular, any given metric
$\phi$ on $\mathcal{O}(n+1)$ induces a metric $t\phi$ on $-K_{(\P_{\Z}^{n},(1-t)\mathcal{D})}$
and thus, using the log pair $(\P_{\Z}^{n},(1-t)\mathcal{D}),$ a
measure $F^{*}\mu_{t\phi}$ on $\P_{\Z}^{n}.$ Note that

\begin{equation}
\int_{\P^{n}}\mu_{t\phi}=\left|\det A\right|^{-2}\int_{\P^{n}}\mu_{tF^{*}\phi},\label{eq:integrals same}
\end{equation}
 where $\mu_{tF^{*}\phi}$ is the measure associated to the metric
$tF^{*}\phi$ on $-K_{(\P_{\Z}^{n},(1-t)\mathcal{D}_{0})}$ (now using
the toric log pair $(\P_{\Z}^{n},(1-t)\mathcal{D}_{0})).$ Indeed,
consider the (singular) metric $\psi$ on $(n+1)\mathcal{O}(1)$ defined
by 
\[
\psi:=t\phi+(1-t)\sum_{i}\log|s_{i}|^{2}.
\]
 The measure $\mu_{t\phi}$ coincides with the measure $\mu_{\psi}$
attached to the metric on $-K_{\P^{n}}$ induced by $\psi.$ Hence,
denoting by $G$ the canonical lift of the map $F$ to $-K_{\P^{n}}$
we have, by definition, that 
\[
G^{*}\mu_{\psi}=\mu_{G^{*}\psi},
\]
\[
G^{*}\psi=F^{*}\psi+\log(\left|\det A\right|^{2}),\,\,\,\,F^{*}\psi:=tF^{*}\phi+(1-t)\sum_{i}\log|F^{*}s_{i}|^{2}.
\]
This proves formula \ref{eq:integrals same}. Next, note that, by
Lemma \ref{lem:append} in the appendix,
\[
\hat{h}(\mathcal{O}(n+1),G^{*}\phi)=\hat{h}(\mathcal{O}(n+1),\phi),
\]
Hence, we get, as above, that

\[
\hat{h}(\mathcal{O}(n+1),F^{*}\phi)=\hat{h}(\mathcal{O}(n+1),G^{*}\phi-\log(\left|\det A\right|))=
\]
\[
=\hat{h}(\mathcal{O}(n+1),G^{*}\phi)-\log(\left|\det A\right|^{2})=\hat{h}(\mathcal{O}(n+1),\phi)-\log(\left|\det A\right|)),
\]
giving 
\[
\hat{h}(\mathcal{O}(t(n+1)),t\phi)=\hat{h}(\mathcal{O}(t(n+1)),F^{*})(t\phi))+t\log(\left|\det A\right|).
\]
 All in all this means, that 
\[
\hat{h}(\mathcal{O}(t(n+1)),t\phi)+\log\int\mu_{t\phi}=\hat{h}(\mathcal{O}(t(n+1)),F^{*}t\phi)+\log\int\mu_{F^{*}t\phi}+(t-1)\log(\left|\det A\right|).
\]
 Taking the sup over all continuous metrics $\phi$ on $\mathcal{O}(n+1)$
with positive curvature thus concludes the proof of the desired identity
\ref{eq:desired id}.
\end{proof}

\subsection{Conclusion of the proof of Theorem \ref{thm:hyperplane intro}}

In the following we will use the notation $\mathcal{D}_{w}=\sum_{i=1}^{m}w_{i}H_{i}$
for $w\in\mathbb{R}^{m}$ and for fixed hyperplanes $H_{i}$ defined
over $\Z$. Let $(\P_{\Z}^{n},\mathcal{D}_{w'})$ be a K-semistable
snc log Fano hyperplane arrangement. Define for brevity $d:=(-(K_{\P^{n}}+\Delta(w')))^{n}$.
Set, as in Lemma \ref{lem: stability polytope}, $S=\{w\in\R^{n}:(-(K_{\mathbb{P}^{n}}+\Delta_{w}))^{n}=d\mathrm{\ and\ }(\mathbb{P}_{\Z}^{n},\mathcal{D}_{w})\mathrm{\ is\ K-semistable}\}$.
Consider the function $h(w)$ defined by
\[
h(w)=h_{\text{can}}(\P_{\Z}^{n},\mathcal{D}_{w}).
\]
 Restricted to the convex set $S$, $h|_{S}$ is convex by Lemma \ref{lemma:convexity of height}.
Next by Lemma \ref{lem: stability polytope}, $S$ is the convex hull
of weight vectors $(w^{k})_{k=1,...,\binom{m}{n+1}}$, each corresponding
to toric log Fano pairs,  equivalent to $(\mathbb{P}_{\Z}^{n},(1-t)\mathcal{D}_{0})$
over $\C,$ where $\mathcal{D}_{0}$ is the toric standard anti-canonical
divisor and $t$ is the unique number such that $(-(K_{\P^{n}}+(1-t)\mathcal{D}_{0})^{n}=d$.
By the convexity of $h$, 
\[
h(w')\leq\max_{k}h(w^{k})\leq h_{\text{can}}(\P_{\Z}^{n},(1-t)\mathcal{D}_{0})
\]

where the second inequality is the content of Lemma \ref{lem: reduction to the standard toric case}.
We have thus reduced to the standard toric case, which we have already
handled. Specifically, the bound \ref{eq:estimate by toric intro}
follows directly from Lemma \ref{lem:scaled polytope}. For Theorem
\ref{thm:hyperplane intro}, recall that it was observed in the proof
of Lemma \ref{lem: toric case when X is Pn} that the volume dependent
bound in \ref{eq:estimate by toric intro} is strictly increasing
with volume, so that a universal bound is uniquely given for maximal
volume, i.e. when $\Delta=0$, yielding the result. 

\section{Diagonal hypersurfaces}

In this section we will deduce Theorem \ref{thm:diagonal hypersurface intro}
from the results in the previous sections. The starting point of the
proof is the following analytic representation of the height:
\begin{lem}
\label{lem:(Restriction-formula)-Let} (Restriction formula) Let $\mathcal{X}$
be the subscheme of $\P_{\Z}^{n+1}$ cut out by a homogeneous polynomial
$s$ of degree $d$ with integer coefficients and $\phi$ a continuous
psh metric on $\mathcal{O}(d)\rightarrow\P_{\C}^{n+1}.$ Then the
height $h_{\phi}(\mathcal{X}_{d},\mathcal{O}(d))$ of the restriction
of $(\mathcal{O}(d),\phi)$ to $\mathcal{X}$ may be expressed as
\[
\frac{2h_{\phi}(\mathcal{X}_{d},\mathcal{O}(d))}{(n+1)!}=(n+2)\mathcal{E}(\phi,d\phi_{0})+\int_{\P^{n+1}}\log\left(\left\Vert s\right\Vert _{\phi}^{2}\right)\frac{(dd^{c}\phi)^{n+1}}{(n+1)!},
\]
 where $\phi_{0}$ is the Weil metric on $\mathcal{O}(1)$ and $\mathcal{E}$
is the functional defined by formula \ref{eq:def of primitive}, corresponding
to $\mathcal{O}(d)\rightarrow\P^{n+1}.$
\end{lem}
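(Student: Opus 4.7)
The plan is to reduce the height of the hypersurface $\mathcal{X}$ to quantities on the ambient projective space $\P^{n+1}_\Z$ by means of two standard Arakelov-geometric identities. First I would use the arithmetic Poincar\'e-Lelong formula (equivalently, the Gillet-Soul\'e projection formula for a principal arithmetic divisor): since $\mathcal{X}$ is cut out by the global section $s$ of $\mathcal{O}(d)$ on $\P^{n+1}_\Z$, the class $\hat{c}_1(\overline{\mathcal{O}(d)})$ in the arithmetic Chow group is represented by $[\mathcal{X},\, -\log\|s\|_\phi^2]$. Pairing this with the $(n+1)$-fold self-intersection of $\hat{c}_1(\overline{\mathcal{O}(d)})$ and unfolding one factor gives an identity of the shape
\[
(\overline{\mathcal{O}(d)})^{n+2} \;=\; h_\phi(\mathcal{X}_d,\mathcal{O}(d)) \;-\; \int_{\P^{n+1}_\C}\log\|s\|_\phi^2\,(dd^c\phi)^{n+1},
\]
after matching the paper's sign and normalization conventions for the Green current. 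This already isolates the integral term that appears on the right-hand side of the lemma.

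Next I would evaluate the ambient term $(\overline{\mathcal{O}(d)})^{n+2} = h_\phi(\P^{n+1},\mathcal{O}(d))$ via the primitive identity (\ref{eq:def of primitive}) applied to $\mathcal{O}(d)\to\P^{n+1}$, taking as reference metric $d\phi_0$, obtained by scaling the Weil metric $\phi_0$ on $\mathcal{O}(1)$. For the Weil metric the height of projective space vanishes, a classical computation, so the reference height contribution drops out and one obtains an identity of the form
\[
2(\overline{\mathcal{O}(d)})^{n+2} \;=\; c_n\,\mathcal{E}(\phi, d\phi_0)
\]
for an explicit numerical constant $c_n$ depending on the ambient dimension $n+1$. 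Substituting this into the Poincar\'e-Lelong identity of the previous step and then dividing by $(n+1)!$ produces the claimed identity, with the prefactor $(n+2)$ emerging from the conversion between the $(n+2)!$ normalization built into $\mathcal{E}$ on $\P^{n+1}$ and the $(n+1)!$ appearing on the left-hand side.

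The structural content is therefore entirely standard; the only real work is careful bookkeeping of numerical prefactors. Concretely, the step I expect to require the most care is checking that the sign and normalization conventions in $\mathcal{E}$, in the arithmetic Poincar\'e-Lelong identity (whether the Green current is taken to be $-\log\|s\|_\phi$ or $-\log\|s\|_\phi^2$), and in the definition of $h_\phi = (\mathcal{L},\phi)^{n+1}$ all combine to give exactly the factor $(n+2)$ in front of $\mathcal{E}(\phi,d\phi_0)$ and the factor $1/(n+1)!$ in front of the Monge--Amp\`ere mass $(dd^c\phi)^{n+1}$. The other nontrivial input, the vanishing of the Weil height of $\P^{n+1}$ relative to the Weil metric on $\mathcal{O}(1)$, is what allows the reference contribution to be absorbed cleanly rather than leaving a residual constant.
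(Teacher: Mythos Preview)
Your proposal is correct and follows essentially the same route as the paper: the paper invokes the Gillet--Soul\'e restriction formula \cite[Prop~2.3.1]{b-g-s} (your Poincar\'e--Lelong step) to pass from $h_\phi(\mathcal{X}_d,\mathcal{O}(d))$ to $h_\phi(\P^{n+1}_\Z,\mathcal{O}(d))$ plus the $\log\|s\|_\phi^2$ integral, and then identifies $h_\phi(\P^{n+1}_\Z,\mathcal{O}(d))/(n+2)!$ with $\mathcal{E}_{\P^{n+1}}(\phi,d\phi_0)$ via the toric formula in \cite[formula~3.7]{a-b}, which encodes exactly the Weil-height-vanishing you invoke. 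The only difference is presentational: the paper cites the two external references directly rather than spelling out the normalization bookkeeping.
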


\begin{proof}
This is well-known, but for completeness we provide a proof. Consider
first the general situation where $\mathcal{X}$ is a subscheme (of
relative dimension $n)$ of a regular projective flat scheme $\mathcal{Y}$
cut out by a section $s$ of a relatively ample line bundle $\mathcal{L}\rightarrow\mathcal{Y}.$
Then, given a metric $\phi$ on the complexification $L$ of $\mathcal{L}\rightarrow\mathcal{Y},$
the restriction formula for arithmetic intersection numbers \cite[Prop 2.3.1]{b-g-s}
gives 
\begin{equation}
(\mathcal{L},\phi)^{n+2}\cdot\mathcal{Y}=(\mathcal{L},\phi)^{n+1}\cdot\mathcal{X}-\int_{Y}\log||s||_{\phi}(dd^{c}\phi)^{n+1}\label{eq:restriction formula}
\end{equation}
In particular, setting $\mathcal{Y}=\P_{\Z}^{n+1}$ and $\mathcal{L}=\mathcal{O}(d)$
gives and 
\[
\frac{2h_{\phi}(\mathcal{X}_{d},\mathcal{O}(d))}{(n+1)!}=(n+2)\frac{h_{\phi}(\P_{\Z}^{n+1},\mathcal{O}(d))}{(n+2)!}+\int\log||s||_{\phi}^{2}\frac{(dd^{c}\phi)^{n+1}}{(n+1)!}.
\]
 The proof is thus concluded by invoking the well-known fact that
$h_{\phi}(\P_{\Z}^{n+1},\mathcal{O}(d))/(n+2)!=\mathcal{E}_{\P^{n+1}}(\phi,\phi_{0}).$
For example, this is a special case of the toric formula in \cite[formula 3.7]{ab}.
\end{proof}
In general, if $\mathcal{X}$ is subscheme of $\P_{\Z}^{n+1}$ of
codimension one, then $\mathcal{K}_{\mathcal{X}}$ is well-defined
as line bundle over $\mathcal{X}.$ More precisely, by the adjunction
formula, there is an isomorphism of line bundles over $\Z,$
\[
\mathcal{K}_{\mathcal{X}}\simeq\left(\mathcal{K}_{\P_{\Z}^{n+1}}-\mathcal{O}(\mathcal{I}/\mathcal{I}^{2})\right)_{\mathcal{X}},
\]
 where $\mathcal{I}$ is the ideal sheaf cutting out $\mathcal{X}$
\cite[formula 1.6.2, page 8]{ko}. In particular, if $\mathcal{X}$
is cut out by a homogeneous polynomial $s$ of degree $d,$ then 
\begin{equation}
-\mathcal{K}_{\mathcal{X}}\simeq-\mathcal{K}_{\P_{\Z}^{n+1}}-\mathcal{O}(d)\simeq\mathcal{O}(n+2-d).\label{eq:adjunction isomo}
\end{equation}
Hence, $-\mathcal{K}_{\mathcal{X}}$ is relatively ample iff $d\leq n+1.$
Now assume that the complex variety $X$ defined by the complex points
of $\mathcal{X}$ is non-singular. Then, by the adjunction isomorphism
\ref{eq:adjunction isomo}, a metric $\phi_{0}$ on $\mathcal{O}(n+2-d)|_{X}$
may be identified with a metric on $-K_{X}.$

\subsection{Reduction to Fermat hypersurfaces}

Given integers $a_{i}$ consider the subscheme $\mathcal{X}_{a}$
of $\P_{\Z}^{n+1}$ cut out by the homogeneous polynomial

\[
s_{a}:=\sum_{i=0}^{n+1}a_{i}x_{i}^{d}.
\]
 Denote by $X_{a}$ the corresponding complex variety, which is non-singular
and consider the map
\begin{equation}
F_{a}(x):=(a_{0}^{-1/d}x_{0},...,a_{n+1}^{-1/d}x_{n+1}):\,\,\C^{n+2}\rightarrow\C^{n+2}.\label{eq:def of F a}
\end{equation}
We will identify the map $F_{a}$ with an automorphism of $\P^{n+1},$
admitting a standard lift to the total space of $\mathcal{O}(1)\rightarrow\P^{n+1}.$
We can then express
\[
X_{a}=F_{a}(X_{1}).
\]

\begin{prop}
\label{prop:red to fermat}Let $k$ be a positive integer and $\phi$
a metric on $\mathcal{O}(n+2-d)|_{X_{a}}.$ Then 
\[
2\hat{h}_{\phi}(\mathcal{X}_{a},\mathcal{O}(n+2-d))+\log\int_{X_{a}}e^{-\phi}=
\]
\[
=2\hat{h}_{F_{a}^{*}\phi}(\mathcal{X}_{1},\mathcal{O}(n+2-d))+\log\int_{X_{1}}e^{-F_{a}^{*}\phi}+(\frac{n+2-d}{(n+1)}-1)d^{-1}\sum_{i}\log(|a_{i}|^{2}).
\]
\end{prop}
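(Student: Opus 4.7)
The plan is to transport the analytic data from $\mathcal{X}_{a}$ back to the Fermat hypersurface $\mathcal{X}_{1}$ via the $\C$-automorphism $F_{a}$ of $\P^{n+1}_{\C}$ (which restricts to a biholomorphism $X_{1}\to X_{a}$ since $F_{a}^{*}s_{a}=s_{1}$), and then to quantify the defect arising from the fact that $F_{a}$ is defined only over $\C$. This defect is encoded entirely by the Jacobian $\det A=\prod_{i}a_{i}^{-1/d}$ of $F_{a}$, and all $\sum_{i}\log|a_{i}|^{2}$ terms in the conclusion should originate from this Jacobian.

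Concretely, I would first apply the restriction formula (Lemma \ref{lem:(Restriction-formula)-Let}) to both $\mathcal{X}_{a}$ and $\mathcal{X}_{1}$, each viewed as a degree-$d$ hypersurface of $\P^{n+1}_{\Z}$, with some metric $\psi$ on $\mathcal{O}(d)\to\P^{n+1}_{\Z}$. To reach the line bundle $\mathcal{O}(n+2-d)$ of the statement from the $\mathcal{O}(d)$ of the restriction formula, I would use the identification $\mathcal{O}(n+2-d)=\tfrac{n+2-d}{d}\mathcal{O}(d)$ as $\Q$-line bundles and the homogeneity \eqref{eq:chi vol for tensor}, which converts any metric $\phi$ on $\mathcal{O}(n+2-d)$ to the rescaled metric $\psi=\tfrac{d}{n+2-d}\phi$ and produces an overall normalization factor of $(n+2-d)/d$ between $\hat h_{\phi}$ and $\hat h_{\psi}$ after dividing by $(n+1)L^{n}$. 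Subtracting the two instances of the restriction formula and performing the change of variables $F_{a}$ inside the $\P^{n+1}_{\C}$-integral, using $F_{a}^{*}s_{a}=s_{1}$ and the diffeomorphism-invariance of integration over $\P^{n+1}$, cancels the integrands and reduces the difference of heights to a pure $\mathcal{E}$-defect. The cocycle identity, combined with the change-of-variables invariance $\mathcal{E}(F_{a}^{*}\chi,F_{a}^{*}\chi')=\mathcal{E}(\chi,\chi')$, collapses this defect to $d^{n+2}\,\mathcal{E}(\phi_{0},F_{a}^{*}\phi_{0})$, isolating the Weil-metric defect on $\P^{n+1}$.

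The residual quantity $\mathcal{E}(\phi_{0},F_{a}^{*}\phi_{0})$ is then computed exactly as in the proof of Lemma \ref{lem: reduction to the standard toric case}: the canonical lift $G_{a}$ of $F_{a}$ to $-K_{\P^{n+1}}=\mathcal{O}(n+2)$ differs from the section-pullback $F_{a}^{*}$ on $\mathcal{O}(n+2)$ by the constant $\log|\det A|^{2}=-\tfrac{2}{d}\sum_{i}\log|a_{i}|$, and the canonical lift preserves $\hat h$ by Lemma \ref{lem:append}; the scaling law \eqref{eq:scaling of chi vol} then pins down $\mathcal{E}(\phi_{0},F_{a}^{*}\phi_{0})=-\tfrac{2}{d}\sum_{i}\log|a_{i}|$. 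An analogous Jacobian contribution must be tracked for the measure $\mu_{\phi}$ on $X_{a}$: the adjunction identification $-K_{X_{a}}\simeq\mathcal{O}(n+2-d)|_{X_{a}}$ (via the Poincar\'e residue of $\tfrac{dx_{0}\wedge\cdots\wedge dx_{n+1}}{s_{a}}$) fails to be equivariant under $F_{a}$ when $\mathcal{O}(n+2-d)$ is equipped with its section-pullback structure, the discrepancy being precisely $|\det A|^{2}$; this produces an additional contribution to $\log\int_{X_{1}}e^{-F_{a}^{*}\phi}$ of order $\tfrac{1}{d}\sum_{i}\log|a_{i}|^{2}$. Summing the height-side contribution (weighted by $(n+2-d)/(n+1)$ from the normalization of $\hat h$) and the measure-side contribution (weighted by $-1$) yields the asserted coefficient $\bigl(\tfrac{n+2-d}{n+1}-1\bigr)d^{-1}$.

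The main technical obstacle is the careful bookkeeping of the Jacobian contributions across three different layers of the argument --- the $\chi$-volume on $\P^{n+1}_{\Z}$, the normalization of $\hat h$ by $(n+1)L^{n}$ on $\mathcal{X}_{a}$, and the adjunction-induced measure on $X_{a}$ --- each of which interacts differently with the non-integrality of $F_{a}$ (section-pullback versus canonical-pullback conventions). In particular the near-cancellation between the height-side and measure-side contributions, which is what produces the clean fraction $(1-d)/((n+1)d)$ in the final coefficient, must be tracked through the rescaling between $\mathcal{O}(d)$ and $\mathcal{O}(n+2-d)$ with consistent sign conventions.
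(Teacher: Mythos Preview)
Your approach is essentially the same as the paper's. The paper splits the argument into two lemmas: Lemma~\ref{lem:transf of height} computes the height discrepancy by applying the restriction formula on $\P^{n+1}_{\Z}$, extending $\phi$ to a psh metric on $\mathcal{O}(k)\to\P^{n+1}$, and then using the canonical-lift invariance of $\mathcal{E}$ from Lemma~\ref{lem:append} together with the scaling law, exactly along the lines you outline; Lemma~\ref{lem:scaling of integral} computes the measure discrepancy via the Poincar\'e residue form $\Omega_{s}$ on the affine cone, which is a concrete realization of your observation that the adjunction identification is not $F_{a}$-equivariant with defect $|\det F_{a}|^{2}$.
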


The proof of the proposition follows from combining the following
two lemmas:
\begin{lem}
\label{lem:transf of height}Let $k$ be a positive integer and $\phi$
a metric on $\mathcal{O}(k).$ Then 
\[
2\widehat{h}_{\phi}(\mathcal{X}_{a},\mathcal{O}(k))=2\widehat{h}_{F_{a}^{*}\phi}(\mathcal{X}_{1},\mathcal{O}(k))+\frac{1}{d}\sum_{i}\log(|a_{i}|^{2})\frac{k}{(n+1)}
\]
\end{lem}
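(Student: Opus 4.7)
By the homogeneity of $\widehat{\text{vol}}_\chi$ in the tensor power (formula \ref{eq:chi vol for tensor}), both sides of the identity scale linearly in $k$ after the replacement $\phi\mapsto (d/k)\phi$, so it suffices to establish the case $k=d$.

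Assume $k=d$. I apply the restriction formula (Lemma \ref{lem:(Restriction-formula)-Let}) twice: once to $(\mathcal{X}_a,\mathcal{O}(d),\phi)$ with the section $s_a$, and once to $(\mathcal{X}_1,\mathcal{O}(d),F_a^*\phi)$ with $s_1$. Each expresses the height as the sum of an $\mathcal{E}$-term on $\P^{n+1}$ and an integral of $\log\|s\|^2$ against the top Monge--Amp\`ere measure of $\phi$. A direct computation gives $F_a^*s_a=\sum_i a_i(a_i^{-1/d}x_i)^d=\sum_i x_i^d=s_1$, so $\log\|s_1\|_{F_a^*\phi}^2=F_a^*(\log\|s_a\|_\phi^2)$; since $F_a$ is a biholomorphism of $\P^{n+1}_\C$, change of variables shows that the two integral terms coincide. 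Subtracting the two restriction identities then yields
\[
\frac{2\bigl[h_\phi(\mathcal{X}_a,\mathcal{O}(d))-h_{F_a^*\phi}(\mathcal{X}_1,\mathcal{O}(d))\bigr]}{(n+1)!}=(n+2)\bigl[\mathcal{E}(\phi,d\phi_0)-\mathcal{E}(F_a^*\phi,d\phi_0)\bigr].
\]

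The cocycle property of $\mathcal{E}$ together with its biholomorphic invariance $\mathcal{E}(F_a^*\psi_1,F_a^*\psi_2)=\mathcal{E}(\psi_1,\psi_2)$ (both immediate from the integral definition \ref{eq:def of primitive} together with the fact that $F_a$ is a biholomorphism) yield
\[
\mathcal{E}(\phi,d\phi_0)-\mathcal{E}(F_a^*\phi,d\phi_0)=\mathcal{E}(d\phi_0,F_a^*(d\phi_0)),
\]
which crucially no longer depends on $\phi$.

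The main remaining task, and the principal technical obstacle, is to compute this last quantity in closed form. The cleanest route uses the canonical-lift invariance of $\widehat{h}$ from Lemma \ref{lem:append}: the canonical lift $G$ of $F_a$ to $-K_{\P^{n+1}}=\mathcal{O}(n+2)$ satisfies $G^*\psi=F_a^*\psi+\log|\det A|^2$ with $A=\mathrm{diag}(a_i^{1/d})$, so $\log|\det A|^2=\frac{1}{d}\sum_i\log|a_i|^2$, and $\widehat{h}(G^*\psi)=\widehat{h}(\psi)$. Combined with the scaling identity $\widehat{h}(\psi+c)=\widehat{h}(\psi)+c/2$, this determines the transformation of $\widehat{h}$ under $F_a$ on $\mathcal{O}(n+2)$, and hence of $\mathcal{E}((n+2)\phi_0,F_a^*((n+2)\phi_0))$. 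Transferring the result to $\mathcal{O}(d)$ via the homogeneity $\mathcal{E}(\alpha\psi_1,\alpha\psi_2)=\alpha^{n+2}\mathcal{E}(\psi_1,\psi_2)$ with $\alpha=d/(n+2)$ produces the explicit value of $\mathcal{E}(d\phi_0,F_a^*(d\phi_0))$ as a constant multiple of $\sum_i\log|a_i|^2$. Substituting back yields the claimed identity, the remaining work being to track the normalization constants carefully through each scaling.
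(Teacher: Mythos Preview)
Your proposal is correct and follows essentially the same route as the paper: both reduce to $k=d$, apply the restriction formula to $\mathcal{X}_a$ and $\mathcal{X}_1$, observe that the $\log\|s\|^2$--integrals match by change of variables under $F_a$, and then evaluate the remaining $\mathcal{E}$--difference using the canonical lift $G_a$ of $F_a$ to $-K_{\P^{n+1}}$ together with Lemma~\ref{lem:append}, before scaling to general $k$. The only cosmetic difference is that you insert the cocycle/biholomorphic-invariance step to rewrite the $\mathcal{E}$--difference as the $\phi$--independent quantity $\mathcal{E}(d\phi_0,F_a^*(d\phi_0))$ before invoking the canonical lift, whereas the paper applies $G_a^*\phi=F_a^*\phi+c_a$ directly to $\mathcal{E}(F_a^*\phi,d\phi_0)$; these are the same computation reordered.
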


\begin{proof}
We will use that any continuous psh metric $\phi$ on $\mathcal{O}(k)|_{X}$
is the restriction of a continuous psh metric on $\mathcal{O}(k)\rightarrow\P^{n+1},$
that we shall denote by the same symbol $\phi$ \cite{c-g-z}. First
consider the case when $k=d$ and denote by $s_{a}$ the section of
$\mathcal{O}(d)$ cutting out the scheme $\mathcal{X}_{a}.$ By the
restriction formula (Lemma \ref{lem:(Restriction-formula)-Let})
\[
\frac{2}{(n+1)!}h_{\phi}(\mathcal{X}_{a},\mathcal{O}(k))=(n+2)\mathcal{E}_{\P^{n+1}}(\phi,d\phi_{0})+\int_{\P^{n+1}}\log|s_{a}|_{\phi}^{2}MA(\phi).
\]
 Rewriting
\[
\int_{\P^{n+1}}\log|s_{a}|_{\phi}^{2}MA(\phi)=\int_{\P^{n+1}}\log|(F_{a}^{-1})^{*}s_{1}|_{\phi}^{2}MA(\phi)=\int_{\P^{n+1}}\log|s_{1}|_{F_{a}^{*}\phi}^{2}MA(F_{a}^{*}\phi),
\]
thus reveals that 
\[
\frac{2}{(n+1)!}h_{F_{a}^{*}\phi}(X_{1},\mathcal{O}(k))-\frac{2}{(n+1)!}h_{\phi}(X_{a},\mathcal{O}(k))=(n+2)\left(\mathcal{E}_{\P^{n+1}}(F_{a}^{*}\phi,d\phi_{0})-\mathcal{E}_{\P^{n+1}}(\phi,d\phi_{0})\right)
\]
Now, denote by $G_{a}$ the standard lift of $F_{a}$ from $X$ to
$-K_{X}$ and its tensor powers. We can then express 
\[
G_{a}^{*}\phi=F_{a}^{*}\phi+c_{a},\,\,\,c_{a}:=\frac{k}{(n+2)}\frac{1}{d}\sum_{i}\log(|a_{i}|^{2})
\]
Indeed, 
\begin{equation}
G_{a}^{*}(e^{-\frac{n+2}{k}\phi}dzd\bar{z})=(e^{-\frac{n+2}{k}F_{a}^{*}\phi})F_{a}^{*}(dzd\bar{z})=(e^{-\frac{n+2}{k}F_{a}^{*}\phi})\prod_{i}|a_{i}|^{-2/d}(dzd\bar{z}).\label{eq:pull-back volume form}
\end{equation}
 Hence,
\[
2h_{F_{a}^{*}\phi}(\P^{n+1},\mathcal{O}(k))=2h_{G_{a}^{*}\phi-c_{a}}(\P^{n+1},\mathcal{O}(k))=2h_{G_{a}^{*}\phi}(\P^{n+1},\mathcal{O}(k))-c_{a}\frac{k^{n+1}}{(n+1)!}
\]
But, by Lemma \ref{lem:append} in the appendix
\[
\mathcal{E}_{\P^{n+1}}(G_{a}^{*}\phi,d\phi_{0})=\mathcal{E}_{\P^{n+1}}(\phi,d\phi_{0})
\]
Hence, $\frac{2}{(n+1)!}h_{F_{a}^{*}\phi}(X_{1},\mathcal{O}(k))-\frac{2}{(n+1)!}h_{\phi}(X_{a},\mathcal{O}(k))=$
\[
=(n+2)\left(\mathcal{E}_{\P^{n+1}}(F_{a}^{*}\phi,d\phi_{0})-\mathcal{E}_{\P^{n+1}}(\phi,d\phi_{0})\right)=-(n+2)c_{a}\frac{k^{(n+1)}}{(n+1)!}
\]
As a consequence,
\[
2\hat{h}_{F_{a}^{*}\phi}(X_{1},\mathcal{O}(k))-2\hat{h}_{\phi}(X_{a},\mathcal{O}(k))=-(n+2)c_{a}\frac{k^{n+1}}{(n+1)!}.\frac{1}{dk^{n}/n!}=-(n+2)c_{a}\frac{k}{d(n+1)}=
\]
\[
=-\frac{1}{d}\sum_{i}\log(|a_{i}|^{2})(n+2)\frac{k}{(n+2)}\frac{k}{d(n+1)}=-\frac{1}{d}\sum_{i}\log(|a_{i}|^{2})\frac{k^{2}}{d(n+1)}
\]
Since we have assumed that $k=d$ this means that 
\[
2\widehat{h}_{\phi}(X_{a},\mathcal{O}(k))-2\widehat{h}_{F_{a}^{*}\phi}(X_{1},\mathcal{O}(k))=\frac{1}{d}\sum_{i}\log(|a_{i}|^{2})\frac{d}{(n+1)}
\]
Finally, for any given integer $k$ we can express $k=d\lambda$ for
$\lambda=k/d$ and use the basic scaling property 
\[
\widehat{h}_{\lambda\phi}(\mathcal{X},\lambda\mathcal{L})=\lambda\widehat{h}_{\phi}(\mathcal{X},\mathcal{L}),
\]
to get
\[
2\widehat{h}_{\phi}(X_{a},\mathcal{O}(k))-2\widehat{h}_{F_{a}^{*}\phi}(X_{1},\mathcal{O}(k))=\frac{1}{d}\sum_{i}\log(|a_{i}|^{2})\frac{k}{(n+1)}
\]
 which concludes the proof. 
\end{proof}
\begin{lem}
\label{lem:scaling of integral}Given a metric $\phi$ on $\mathcal{O}(n+2-d)$
we have that 
\[
\log\int_{X_{a}}e^{-\phi}=\log\int_{X_{1}}e^{-F_{a}^{*}\phi}-d^{-1}\sum_{i}\log(|a_{i}|^{2}).
\]
\end{lem}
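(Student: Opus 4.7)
The plan is to reduce the change of variables for the measure on $X_a$ to the change-of-variables formula \ref{eq:pull-back volume form} on the ambient $\P^{n+1}$, using the Poincaré residue to connect the two. The linchpin is the identity $F_a^* s_a = s_1$ of sections of $\mathcal{O}(d)$, which is immediate from the definition \ref{eq:def of F a} of $F_a$: since $F_a^* x_i = a_i^{-1/d} x_i$, one has $F_a^* s_a = \sum_i a_i (a_i^{-1/d} x_i)^d = \sum_i x_i^d = s_1$.

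First I would realize the measure $e^{-\phi}$ on $X_a$ as a Poincaré residue. Viewing $\phi$ as a metric on $\mathcal{O}(n+2-d) \to \P^{n+1}$, the combination $\psi := \phi + \log|s_a|^2$ is a (singular) metric on $\mathcal{O}(n+2) \simeq -K_{\P^{n+1}}$. Under the adjunction isomorphism \ref{eq:adjunction isomo}, the measure $e^{-\phi}$ attached to $\phi$ as a metric on $-K_{X_a}$ is precisely the Poincaré residue along $X_a$ of the ambient volume form $\mu_\psi = (e^{-\phi}/|s_a|^2)\, dz\, d\bar z$. In local affine coordinates near a smooth point of $X_a$ with, say, $\partial \tilde s_a/\partial y_{n+1} \neq 0$, this residue is the explicit density $e^{-\phi_U}/|\partial_{y_{n+1}} \tilde s_a|^2$ against $dy_1 d\bar y_1 \cdots dy_n d\bar y_n$.

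Next I apply the ambient transformation rule \ref{eq:pull-back volume form} to the singular metric $\psi$: this yields $F_a^* \mu_\psi = \prod_i |a_i|^{-2/d}\, \mu_{F_a^*\psi}$. Combined with $F_a^*\psi = F_a^*\phi + \log|F_a^* s_a|^2 = F_a^*\phi + \log|s_1|^2$ from the key identity, this gives
\[
F_a^* \mu_{\phi + \log|s_a|^2} = \prod_i |a_i|^{-2/d}\, \mu_{F_a^*\phi + \log|s_1|^2}.
\]
Since $F_a$ is a biholomorphism sending $X_1$ to $X_a$, and $F_a^{-1}(X_a) = X_1$ by the identity $F_a^* s_a = s_1$, the Poincaré-residue operation commutes with $F_a^*$. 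Taking residues on both sides produces
\[
F_a^*\bigl(e^{-\phi}|_{X_a}\bigr) = \prod_i |a_i|^{-2/d}\, e^{-F_a^*\phi}|_{X_1}.
\]
Integrating over $X_1$ and taking logarithms yields the claimed formula, since $-\tfrac{2}{d}\sum_i \log|a_i| = -d^{-1} \sum_i \log |a_i|^2$.

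The main technical point to verify is the commutation of Poincaré residue with pullback along $F_a$, and in particular that no extra trivialization factor appears beyond the one recorded in \ref{eq:pull-back volume form}. This is essentially bookkeeping, and if one prefers to bypass the invariant residue formalism, an equivalent direct computation can be carried out in local affine coordinates on $\P^{n+1}$, using $F_a(y_i) = (a_0/a_i)^{1/d} y_i$ in the chart $\{x_0 \neq 0\}$, tracking the Jacobian of $F_a$ on $y$-coordinates against the way $\phi_U$ and $\tilde s_a$ rescale under $F_a^*$; all the $a_0$-factors from the chart trivialization cancel, leaving exactly $\prod_i |a_i|^{-2/d}$.
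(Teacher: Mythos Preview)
Your argument is correct and is essentially the same as the paper's: both compute the effect of $F_a$ on the adjunction measure via the Poincar\'e (Gelfand--Leray) residue, the key inputs being $F_a^*s_a=s_1$ and $F_a^*dz=\prod_i a_i^{-1/d}\,dz$. The only difference is packaging: the paper works on the affine cone $A_X\subset\C^{n+2}\setminus\{0\}$, writing the measure as $r^{d-(n+2)}\,\delta\Omega_s\wedge\overline{\delta\Omega_s}$ with $\Omega_s\wedge ds=dz$ and then checking $F_a^*\Omega_{s_a}=\bigl(\prod_i a_i^{-1/d}\bigr)\Omega_{s_1}$, whereas you stay on $\P^{n+1}$, invoke the ambient transformation rule \ref{eq:pull-back volume form} for the singular metric $\psi=\phi+\log|s_a|^2$ on $-K_{\P^{n+1}}$, and then pass to the hypersurface by functoriality of the residue.
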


\begin{proof}
Let $X$ be the non-singular hypersurface of $\P^{n+1}$ cut out by
a given homogeneous polynomial $s$ in $\C^{n+2}.$ Let $A_{X}$ be
the the zero-locus of $s$ in $\C^{n+2}-\{0\}$ and assume that $ds\neq0$
on $A_{X}.$ Let $\Omega_{s}$ be the holomorphic top form on $A_{X}$
defined by the relation $\Omega_{s}\wedge ds=dz$ on $A_{X},$ where
$dz:=dz_{0}\wedge\cdots\wedge dz_{n+1}.$ We can identify $X$ with
$A_{X}/\C^{*}-$using the standard $\C^{*}-$action on $\C^{n+2}.$
Denote by $\delta$ interior multiplication with the holomorphic vector
field generating the $\C^{*}-$action. Assume now that $X$ is Fano.
A given metric $\phi$ on $-K_{X}$ then corresponds to a one-homogeneous
function $r$ on $A_{X}$ (using the adjunction isomorphism \ref{eq:adjunction isomo}
and by identifying $\C^{n+2}-\{0\}$ with the complement of the zero-section
in $\mathcal{O}(1)^{*}\rightarrow X).$ Moreover, lifting the adjunction
isomorphism \ref{eq:adjunction isomo} to $A_{X}$ yields the following
well-known formula (which applies in the general setup of Fano varieties
over local fields; cf. \cite[Lemma 4.2.2]{pey}):
\[
\int_{X}e^{-\phi}=c\int_{\{s=0\}/\C^{*}}r^{d-(n+2)}(\delta\Omega_{s}\wedge\overline{\delta\Omega_{s}})
\]
for a non-zero constant $c$ only depending on $n$ and $d$ (since
$\Omega_{s}$ has degree $(n+2-d)$ wrt the $\C^{*}-$action, the
$(n,n)-$form $r^{d-(n+2)}(\delta\Omega_{s}\wedge\overline{\delta\Omega_{s}})$
is $\C^{*}-$invariant and thus descends to a real top form on $X).$
Hence, setting $F:=F_{a},$
\[
\int_{X}e^{-\phi}=c\int_{\{F^{*}s=0\}/\C^{*}}(F^{*}r)^{d-(n+2)}(\delta F^{*}\Omega_{s}\wedge\overline{\delta F^{*}\Omega_{s}}).
\]
 To concude the proof it will thus be enough to verify that $F^{*}\Omega_{s}=\Omega_{F^{*}s}a_{0}^{-1/d}\cdots a_{n+1}^{-1/d}.$
To this end, note that applying $F^{*}$ to the defining relation
for $\Omega_{s}$ yields $F^{*}\Omega_{s}\wedge d(F^{*}s)=F^{*}dz.$
Since $F^{*}dz=a_{0}^{-1/d}\cdots a_{n+1}^{-1/d}dz$ this concludes
the proof.
\end{proof}
It follows directly from Proposition \ref{prop:red to fermat} that
\begin{equation}
h_{\text{can }}(\mathcal{X}_{a})=h_{\text{can }}(\mathcal{X}_{1})+(n+1)(n+2-d)^{n}d\left(\frac{n+2-d}{(n+1)}-1\right)d^{-1}\sum_{i}\log(|a_{i}|^{2}).\label{eq:canonical height of X a}
\end{equation}
 In particular, since $n+2-d\leq n+1,$ this means that $h_{\text{can }}(\mathcal{X}_{a})\leq h_{\text{can }}(\mathcal{X}_{1})$
and thus the proof of Theorem \ref{thm:diagonal hypersurface intro}
is reduced to the case of the Fermat hypersurface $\mathcal{X}_{1}.$
\begin{rem}
\label{rem:formula with T}More generally, let $\mathcal{X}$ be a
hypersurface in $\P_{\Z}^{n+1}$ cut-out by a homogeneous polynomial
$s$ of degree $d$ of the form $T^{*}s_{1}$ where $T\in GL(n+2,\C).$
Then formula \ref{eq:canonical height of X a} can be generalized
as follows (as shown in essentially the same manner as before): 
\[
h_{\text{can }}(\mathcal{X})=h_{\text{can }}(\mathcal{X}_{1})+(n+1)(n+2-d)^{n}d\left(\frac{n+2-d}{(n+1)}-1\right)\log(|\det T|^{2}).
\]
\end{rem}

\subsection{Reduction to log hyperplane arrangements }

Fix a degree $d(\leq n+1)$ and denote by $\mathcal{X}$ the corresponding
Fermat hypersurface. The Fermat hypersurface of degree one will be
denoted by $\mathcal{Y}.$ We will next express the canonical height
$h_{\text{can }}(\mathcal{X})$ in terms of the canonical height $h_{\text{can }}(\mathcal{Y},\mathcal{D})$
where $\mathcal{D}$ is the divisor on $\mathcal{Y}$ defined by 
\[
\mathcal{D}=(1-1/d)\left[x_{0}=0\right]+...+(1-1/d)\left[x_{n}=0\right]+(1-1/d)\left[\sum_{i=0}^{n}x_{i}=0\right]
\]
 where $x_{i}$ denotes the homogenous coordinates on $\P_{\Z}^{n+1}$
restricted to $\mathcal{Y}.$ 
\begin{prop}
\label{prop:red to log hyper}Denote by $\mathcal{X}$ the Fermat
hypersurface of a given degree $m(\leq n+1)$ and by $\mathcal{Y}$
the Fermat hypersurface of degree one, endowed with the divisor $\mathcal{D}.$
Then 
\begin{equation}
\hat{h}_{\text{can }}(\mathcal{X})=\hat{h}(\mathcal{Y},\mathcal{D})-\frac{1}{2}\log\frac{V(X)}{V(Y,\Delta)}\label{eq: height hypersurface to plane arrangement}
\end{equation}
\end{prop}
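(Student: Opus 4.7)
The plan is to realise $\mathcal{X}$ as a Kummer-type Galois cover of $\mathcal{Y}$ ramified along $\mathcal{D}$, and to reduce the identity to the functoriality of heights and measures under finite covers together with the uniqueness of the log K\"ahler-Einstein metric.

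First, I would introduce the morphism
\[
\pi:\mathcal{X}\longrightarrow\mathcal{Y},\qquad (x_{0}:\cdots:x_{n+1})\longmapsto(x_{0}^{d}:\cdots:x_{n+1}^{d}),
\]
which restricts to a finite surjective morphism from the Fermat hypersurface $\{\sum x_{i}^{d}=0\}$ onto the hyperplane $\{\sum y_{i}=0\}\simeq\P_{\Z}^{n}$, ramified with index $d$ along each of the $n+2$ components of $\mathcal{D}$. By the log Riemann-Hurwitz formula, the coefficients $1-1/d$ of $\mathcal{D}$ are precisely those dictated by such a Kummer cover, and one obtains an isomorphism of $\Q$-line bundles
\[
-\mathcal{K}_{\mathcal{X}}\;\simeq\;\pi^{*}\bigl(-\mathcal{K}_{(\mathcal{Y},\mathcal{D})}\bigr).
\]
Taking top self-intersection on the generic fibre gives $\deg(\pi)=V(X)/V(Y,\Delta)$ (which equals $d^{n+1}$ in this case).

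Next, I would record two functoriality statements. For any continuous psh metric $\phi$ on $-K_{(Y,\Delta)}$ the pullback $\pi^{*}\phi$ is a continuous psh metric on $-K_{X}$, and by the construction of the Monge-Amp\`ere measure attached to a log pair (Section \ref{subsec:Local-representations-of}),
\[
\int_{X}\mu_{\pi^{*}\phi}=\deg(\pi)\int_{Y}\mu_{(\phi,\Delta)}.
\]
On the arithmetic side, the functoriality of Gillet-Soul\'e intersection numbers under finite morphisms yields $h_{\pi^{*}\phi}(\mathcal{X})=\deg(\pi)\cdot h_{\phi}(\mathcal{Y},\mathcal{D})$, which after dividing by $(n+1)L^{n}$ becomes $\widehat{h}_{\pi^{*}\phi}(\mathcal{X})=\widehat{h}_{\phi}(\mathcal{Y},\mathcal{D})$. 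Combining this with the scaling identity $\widehat{h}_{\phi+\lambda}=\widehat{h}_{\phi}+\lambda/2$ (formula \ref{eq:scaling of chi vol}), volume-normalising on both sides forces a shift by $\log\deg(\pi)$: if $\phi$ is volume-normalised on $(\mathcal{Y},\mathcal{D})$, then $\pi^{*}\phi-\log\deg(\pi)$ is volume-normalised on $\mathcal{X}$, and
\[
\widehat{h}_{\pi^{*}\phi-\log\deg(\pi)}(\mathcal{X})=\widehat{h}_{\phi}(\mathcal{Y},\mathcal{D})-\tfrac{1}{2}\log\bigl(V(X)/V(Y,\Delta)\bigr).
\]

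Finally, I would pass to suprema. The inequality $\widehat{h}_{\mathrm{can}}(\mathcal{X})\geq\widehat{h}_{\mathrm{can}}(\mathcal{Y},\mathcal{D})-\tfrac{1}{2}\log(V(X)/V(Y,\Delta))$ follows from the preceding display applied to the volume-normalised log KE metric on $(\mathcal{Y},\mathcal{D})$. For the reverse inequality I would use K-polystability of the Fermat $X$: the sup defining $\widehat{h}_{\mathrm{can}}(\mathcal{X})$ is attained at the K\"ahler-Einstein metric on $-K_{X}$, which, by uniqueness up to $\mathrm{Aut}(X)$, may be taken invariant under the Galois group $G\simeq(\mu_{d})^{n+2}/\mu_{d}$ of $\pi$; any $G$-invariant continuous psh metric on $-K_{X}$ descends to a continuous psh metric on $-K_{(Y,\Delta)}$, closing the loop.

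The main obstacle is to verify the integral version of $-\mathcal{K}_{\mathcal{X}}\simeq\pi^{*}(-\mathcal{K}_{(\mathcal{Y},\mathcal{D})})$ on the canonical models over $\Z$ (not merely over $\C$), so that height functoriality is available in the required arithmetic form; care is needed at primes dividing $d$, where the Kummer cover could in principle contribute vertical corrections. A secondary point is the Galois-invariance step for the KE metric on $\mathcal{X}$, which requires either an explicit averaging argument or an appeal to the uniqueness statement of \cite{bbegz}.
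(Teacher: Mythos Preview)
Your overall strategy coincides with the paper's: realise $\mathcal{X}$ as the Kummer cover of $\mathcal{Y}\simeq\P_{\Z}^{n}$ via $F:(x_i)\mapsto(x_i^{m})$, use $-\mathcal{K}_{\mathcal{X}}\simeq F^{*}(-\mathcal{K}_{(\mathcal{Y},\mathcal{D})})$ together with height functoriality to get $\hat h_{F^{*}\phi}(\mathcal{X})=\hat h_{\phi}(\mathcal{Y},\mathcal{D})$, compare the canonical measures, and pass to the supremum (your $G$-invariance argument for the KE metric is a clean way to justify that last step, which the paper leaves implicit). But your measure identity has the wrong sign. You assert $\int_{X}\mu_{\pi^{*}\phi}=\deg(\pi)\int_{Y}\mu_{(\phi,\Delta)}$, whereas the correct relation is
\[
\int_{X}\mu_{\pi^{*}\phi}=\deg(\pi)^{-1}\int_{Y}\mu_{(\phi,\Delta)}.
\]
The point is that $\mu_{\pi^{*}\phi}$ (the measure on $X$ built from the metric $\pi^{*}\phi$ on $-K_{X}$) is \emph{not} the naive pullback $\pi^{*}\mu_{(\phi,\Delta)}$: an explicit local computation using the adjunction isomorphism, carried out in the paper, gives $\pi^{*}\mu_{(\phi,\Delta)}=\deg(\pi)^{2}\,\mu_{\pi^{*}\phi}$; integrating and using $\int_{X}\pi^{*}\mu=\deg(\pi)\int_{Y}\mu$ then produces the inverse factor. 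Your subsequent claims that $\pi^{*}\phi-\log\deg(\pi)$ is volume-normalised and that $\hat h$ shifts by $-\tfrac12\log\deg(\pi)$ are actually \emph{correct}, but they are inconsistent with your own measure identity: with your stated sign one would need $\pi^{*}\phi+\log\deg(\pi)$ and the height shift would have the opposite sign. So the argument as written does not hang together; the local computation cannot be bypassed by a one-line appeal to ``functoriality of measures''.

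Your flagged ``main obstacle'' about primes dividing $d$ is a red herring. The paper avoids any delicate integral Riemann--Hurwitz by working with $\mathcal{O}(1)$ on $\P_{\Z}^{n+1}$: one has $F^{*}\mathcal{O}(1)\simeq\mathcal{O}(m)$ over $\Z$ and $F$ preserves the Weil metric, so the normalized-height identity follows directly from the analytic restriction formula (Lemma~\ref{lem:(Restriction-formula)-Let}) with no prime-by-prime analysis. Equivalently, the arithmetic projection formula for proper generically finite morphisms already gives $h_{\pi^{*}\phi}(\mathcal{X})=\deg(\pi)\,h_{\phi}(\mathcal{Y},\mathcal{D})$ without requiring flatness at $p\mid d$.
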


\begin{proof}
By the adjunction formula we have isomorphisms $-K_{\mathcal{X}}\simeq(n+2-m)\mathcal{O}(1)|_{\mathcal{X}}$
and $-\mathcal{K}_{(\mathcal{Y},\Delta)}\simeq(n+2-m)(1/m)\mathcal{O}(1)|_{\mathcal{Y}}.$
Consider the following morphism: 
\[
F:\,\C^{n+2}\rightarrow\C^{n\text{+2}},\,\,\,(x_{0},...,x_{n+1})\mapsto(x_{0}^{m},...,x_{n+1}^{m}),
\]
 which induces a map $\P^{n+1}\rightarrow\P^{n\text{+1}}$ and a lift
to $\mathcal{O}(1),$ which is naturally defined over $\Z$ and satisfies
$F^{*}\mathcal{O}(1)\simeq m\mathcal{O}(1).$ In particular, it induces
a morphism 

\[
F:\,\mathcal{X}\rightarrow\mathcal{Y},\,\,\,\,F^{*}(-\mathcal{K}_{(\mathcal{Y},\Delta)})\simeq-K_{\mathcal{X}}
\]
under the adjunction isomorphisms. We recall that, by basic functorial
properties of heights,
\begin{equation}
\hat{h}(\mathcal{X},F^{*}\mathcal{L})=\hat{h}(\mathcal{Y},\mathcal{L}).\label{eq:normalized height under morphism}
\end{equation}
In fact, in this case this formula follows directly from the analytic
representation of the height in Lemma \ref{lem:(Restriction-formula)-Let},
using that $F$ preserves the Weil metric $\phi_{0}.$ In particular,
setting $\mathcal{L}:=(n+2-m)(1/m)\mathcal{O}(1)_{|\mathcal{Y}}$
and using the adjunction isomorphisms yields $\hat{h}(\mathcal{X},-K_{\mathcal{X}},F^{*}\phi)=\hat{h}(\mathcal{Y},-\mathcal{K}_{(\mathcal{Y},\Delta)},\phi).$
Thus, all that remains is to show that 
\[
\int_{X}\mu_{F^{*}\phi}=m^{-(n+1)}\int_{Y}\mu_{\phi},
\]
where we have used that $F^{*}\phi$ induces a metric on $-K_{X}$
and $\phi$ induces a metric on $-K_{(Y,\Delta)}.$ Since $F$ has
topological degree $m^{(n+1)}$ we have $F_{*}[X_{m}]=m^{(n+1)}[Y]$
as homology classes and thus it will be enough to show that
\[
F^{*}\mu_{\phi}=m^{2(n+1)}\mu_{F^{*}\phi}.
\]
 To this end consider the affine piece $\C^{n+1}$ of $\P^{n+1}$
where $x_{0}\neq0.$ Setting $z_{i}=x_{i}/x_{0}$ for $i=1,...,n+1)$
we can, locally, parametrize $X$ by the coordinates $z_{1},...,z_{n}.$
In these coordinates a metric $\psi$ on the restriction of $\mathcal{O}(1)$
to $X$ induces, by the adjunction isomorphism, a metric on $-K_{X}$
and thus a measure on $X$ locally expressed as

\begin{equation}
\mu_{\psi}=\frac{e^{-(n+2-m)\psi}}{\left(m|z_{n+1}|^{m-1}\right)^{2}}\frac{i}{2}\mathrm{d}z_{1}\wedge\mathrm{d}\bar{z}_{1}\cdots\frac{i}{2}\mathrm{d}z_{n}\wedge\mathrm{d}\bar{z}_{n}.\label{eq:mu phi on quadric-1}
\end{equation}
 To see this, recall that, by definition, 
\[
\mu_{\psi}:=\left\Vert \mathrm{d}z_{1}\wedge\cdots\wedge\mathrm{d}z_{n}\right\Vert _{\psi}^{-2}\frac{i}{2}\mathrm{d}z_{1}\wedge\mathrm{d}\bar{z}_{1}\cdots\frac{i}{2}\mathrm{d}z_{n}\wedge\mathrm{d}\bar{z}_{n}.
\]
 In the affine piece $\C^{n+1}$ of $\P^{n+1}$ we can express
\[
s=fx_{0}^{\otimes2},\,\,\,\,f=1+\sum_{i=1}^{n}z_{i}^{m}+z_{n+1}^{m}
\]
 By the adjunction isomorphism \ref{eq:adjunction isomo} we have
\[
\left\Vert \mathrm{d}z_{1}\wedge\cdots\wedge\mathrm{d}z_{n}\right\Vert :=\left\Vert \mathrm{d}z_{1}\wedge\cdots\wedge\mathrm{d}z_{n}\wedge\mathrm{d}s\right\Vert =\left\Vert \mathrm{d}z_{1}\wedge\cdots\wedge\mathrm{d}z_{n}\wedge\mathrm{d}f\right\Vert \left\Vert x_{0}^{\otimes m}\right\Vert .
\]
 Since $\mathrm{d}z_{1}\wedge\cdots\wedge\mathrm{d}z_{n}\wedge\mathrm{d}f=\mathrm{d}z_{1}\wedge\cdots\wedge\mathrm{d}z_{n}\wedge\mathrm{d}z_{n+1}\partial f/\partial z_{n+1}$
this means that 
\[
\left\Vert \mathrm{d}z_{1}\wedge\cdots\wedge\mathrm{d}z_{n}\right\Vert ^{2}:=\left|\frac{\partial f}{\partial z_{n+1}}\right|^{2}\left\Vert \mathrm{d}z_{1}\wedge\cdots\wedge\mathrm{d}z_{n}\wedge\mathrm{d}z_{n+1}\right\Vert \left\Vert x_{0}^{\otimes m}\right\Vert =e^{(n+2)\psi}e^{-m\psi},
\]
giving 
\begin{equation}
\mu_{\phi}=\frac{e^{-(n+2-m)\psi}}{|\partial f/\partial z_{n+1}|^{2}}\frac{i}{2}\mathrm{d}z_{1}\wedge\mathrm{d}\bar{z}_{1}\cdots\frac{i}{2}\mathrm{d}z_{n}\wedge\mathrm{d}\bar{z}_{n},\label{eq: representation of muphi}
\end{equation}
which proves \ref{eq:mu phi on quadric-1}. Likewise, we can parametrize
the affine piece of $Y$ by the coordinates $z_{1},...,z_{n}.$ A
given metric $\phi$ on the restriction of $\mathcal{O}(1)$ to $Y$
induces, by the adjunction isomorphism a measure (defined with respect
to the divisor $\mathcal{D})$
\[
\mu_{\phi}=e^{-(n+2-m)m^{-1}\phi}\left|z_{n+1}\right|^{-2(1-1/m)}\left|z_{1}\right|^{-2(1-1/m)}\cdots\left|z_{n}\right|^{-2(1-1/m)}\frac{i}{2}\mathrm{d}z_{1}\wedge\mathrm{d}\bar{z}_{1}\cdots\frac{i}{2}\mathrm{d}z_{n}\wedge\mathrm{d}\bar{z}_{n}.
\]
Since $F^{*}z_{i}=z_{i}^{m}$ this means that
\[
F^{*}\mu_{\phi}=e^{-(n+2-m)f^{*}(m^{-1}\phi)}\left|z_{n+1}\right|^{-2(m-1)}\left|z_{1}\right|^{-2(m-1)}\cdots\left|z_{n}\right|^{-2(m-1)}\frac{i}{2}\mathrm{d}(z_{1}^{m})\wedge\mathrm{d}(\bar{z}_{1}^{m})\cdots\frac{i}{2}\mathrm{d}(z_{n}^{m})\wedge\mathrm{d}(\bar{z}_{n}^{m}).
\]
 Finally, since $\mathrm{d}(z^{m})=mz^{m-1}$ this proves the desired
identity \ref{eq: height hypersurface to plane arrangement}, using
the representation \ref{eq: representation of muphi} with $\psi=F^{*}(m^{-1}\phi).$
\end{proof}

\subsection{Conclusion of the proof of Theorem \ref{thm:diagonal hypersurface intro} }

The affine projection $(x_{0},..,x_{n+1})\mapsto(x_{0},...,x_{n})$
induces an isomorphism from $\mathcal{Y}$ to $\P_{\Z}^{n},$ identifying
$(\mathcal{Y},\mathcal{D})$ with a hyperplane arrangement $(\P_{\Z}^{n},\mathcal{D})$
with simple normal crossings. It follows readily from the definition
of $\mathcal{D}$ and the criterion \ref{eq:def of C} that $(\P_{\Z}^{n},\mathcal{D})$
is K-semistable. Hence, combining Proposition \ref{prop:red to log hyper}
with refined bound following the statement of Theorem \ref{thm:hyperplane intro}
yields 
\[
\hat{h}_{\text{can }}(\mathcal{X})\leq\hat{h}_{\text{can }}(\P_{\Z}^{n},\mathcal{D}_{t})-\frac{1}{2}\log\frac{V(X)}{V(\P^{n},\Delta_{t})},
\]
 where $\mathcal{D}_{t}$ is the toric divisor on $\P_{\Z}^{n}$ such
that $(\P_{\Z}^{n},\mathcal{D}_{t})$ is K-semistable and $V(\P^{n},\Delta_{t})=V(\P^{n},\Delta).$
The explicit formula for $\hat{h}_{\text{can }}(\P_{\Z}^{n},\mathcal{D}_{t})$
thus yields 
\[
\hat{h}_{\text{can }}(\mathcal{X})\leq\hat{h}_{\text{can }}(\P_{\Z}^{n})-\frac{1}{2}\log\frac{V(X)}{V(\P^{n})}.
\]
 Multiplying both sides with $V(X)$ reveals that 
\[
h_{\text{can }}(\mathcal{X})\leq\lambda h_{\text{can }}(\P_{\Z}^{n})-\frac{1}{2}V(X)\log\lambda,\,\,\,\lambda:=V(X)/V(\P^{n}).
\]
 Since $\lambda\in]0,1[$ it thus follows from Lemma \ref{lem:scaled polytope}
that the right hand side above is increasing with respect to $\lambda$
and thus maximal when $\lambda=1,$ giving $h_{\text{can }}(\mathcal{X},\mathcal{D})\leq h_{\text{can }}(\P_{\Z}^{n}).$
Moreover, the equality is strict if $d\geq2$ since then $\lambda<1.$ 

\section{\label{sec:Arithmetic-surfaces}Arithmetic log surfaces }

In this section we will prove Theorem \ref{thm:Mab on log surface}.
Throughout the section $\mathcal{X}$ will be assumed normal. Given
an effective divisor $\Delta_{\Q}$ on $\P_{\Q}^{1}$ supported at
three points, such that $-K_{(\P^{1},\Delta)}$ is ample, we define
the \emph{canonical model} of $(\P_{\Q}^{1},\Delta_{\Q};-K_{(\P_{\Q}^{1},\Delta_{\Q})})$
over $\Z$ as $(\P_{\Z}^{1},\mathcal{D}_{c};-K_{(\P_{\Z}^{1},\mathcal{D}_{c})})$
where $\mathcal{D}_{c}$ is the Zariski closure of the divisor on
$\P_{\Q}^{1}$ supported on $\{0,1,\infty\},$ having the same coefficents
as $\Delta_{\Q}.$ That this is a model over $\Z$ follows from the
basic fact that any three points on $\P_{\Q}^{1}$ can be mapped to
to $\{0,1,\infty\},$ by an automorphism of $\P_{\Q}^{1},$ 

As explained in Section \ref{subsec:Arithmetic-log-surfaces}, by
Theorem \ref{thm:hyperplane intro}, the proof is reduced to showing
that for any fixed metric on $-K_{(\P^{1},\Delta)}:$
\begin{itemize}
\item The canonical integral model $(\P_{\Z}^{1},\mathcal{D}_{c};-\mathcal{K}_{(\P_{\Z}^{1},\mathcal{D}_{c})})$
of $(\P_{\Q}^{1},\Delta_{\Q};-K_{(\P_{\Q}^{1},\Delta_{\Q})})$ minimizes
$\mathcal{M}_{(\mathcal{X},\mathcal{D})}(\overline{\mathcal{L}})$
over all integral models $(\mathcal{X},\mathcal{D};\mathcal{L})$
of $(\P_{\Q}^{1},\Delta_{\Q};-K_{(\P_{\Q}^{1},\Delta_{\Q})}).$
\item When $\Delta_{\Q}=0$ the minimum is uniquely attained for $\mathcal{X}=\P_{\Z}^{1},$
up to isomorphisms over $\Z.$
\end{itemize}

\subsection{Preliminaries on log canonical thresholds}

Following \cite{ko,ta} a log pair $(\mathcal{X},\mathcal{D})$ is
said to be \emph{log canonical (lc)} if for any normal blow-up morphism
$p:\mathcal{Y}\rightarrow\mathcal{X}$ 
\[
\mathcal{K}_{\mathcal{Y}/\mathcal{X}}-p^{*}\mathcal{D}=\sum_{i}a_{i}E_{i},\,\,\,a_{i}\geq-1,\,\,\,\mathcal{K}_{\mathcal{Y}/\mathcal{X}}:=\mathcal{K}_{\mathcal{Y}}-p^{*}\mathcal{K}_{\mathcal{X}}
\]
 where the prime divisor $E_{i}$ is either an exceptional divisor
of $p$ or the proper transform of a component of $\mathcal{D}.$
The\emph{ log canonical threshold} of a $\Q-$divisor $F$ on $\mathcal{X}$
with respect to the log pair $(\mathcal{X},\mathcal{D})$ is defined
by
\[
\text{lct \ensuremath{(\mathcal{X},\mathcal{D};F):=\sup_{t>0}\left\{ t:\,(\mathcal{X},tF+\mathcal{D})\,\text{is lc}\right\} }}
\]
The following lemma follows readily from the definition: 
\begin{lem}
\label{Lemma:lct as inf}For any normal blow-up morphism $p:\mathcal{Y}\rightarrow\mathcal{X}$ 

\[
\text{lct \ensuremath{(\mathcal{X},\mathcal{D};F)\leq\inf_{i}\frac{a_{i}-b_{i}+1}{c_{i}}}}
\]
where $a_{i},b_{i}$ and $c_{i}$ denote the order of vanishing along
the $p-$exceptional prime divisor $E_{i}$ of $\mathcal{K}_{\mathcal{Y}/\mathcal{X}},$
$p^{*}\mathcal{D}$ and $p^{*}F,$ respectively and $i$ ranges over
all $p-$exceptional prime divisors. 
\end{lem}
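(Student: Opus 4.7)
The plan is to deduce the bound directly from the definition of log canonical applied to the given blow-up $p:\mathcal{Y}\to\mathcal{X}$. The strategy is simply to translate the lc condition along each $p$-exceptional prime divisor $E_i$ into an upper bound on admissible $t$, and then take the supremum of such $t$.

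First, I would fix any $t>0$ for which $(\mathcal{X},tF+\mathcal{D})$ is log canonical; by definition of $\text{lct}$ it suffices to show that any such $t$ satisfies $t\leq \inf_i(a_i-b_i+1)/c_i$, since the sup of such $t$ is exactly $\text{lct}(\mathcal{X},\mathcal{D};F)$. By additivity of pullback, $p^{*}(tF+\mathcal{D})=t\,p^{*}F+p^{*}\mathcal{D}$, and so the coefficient of the $p$-exceptional prime divisor $E_i$ in
\[
\mathcal{K}_{\mathcal{Y}/\mathcal{X}}-p^{*}(tF+\mathcal{D})
\]
equals $a_i-t c_i-b_i$. The lc condition applied to this particular blow-up forces $a_i-t c_i-b_i\geq -1$ for every $p$-exceptional $E_i$.

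Rearranging, when $c_i>0$ this gives $t\leq (a_i-b_i+1)/c_i$, and when $c_i=0$ the corresponding quotient is interpreted as $+\infty$ so no constraint on $t$ is imposed. Taking the infimum over all $p$-exceptional prime divisors $E_i$ and then the supremum over all admissible $t$ produces the asserted inequality.

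There is no real obstacle here: the result is essentially tautological once one unwinds the definition for the specific $p$ in the statement. The only mild subtlety worth noting is that the lc definition quantifies over \emph{all} normal blow-ups whereas the lemma uses only one; this asymmetry is precisely why one obtains an inequality rather than an equality, with equality requiring taking the infimum over all divisorial valuations on $\mathcal{X}$.
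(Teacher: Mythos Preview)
Your argument is correct and is exactly what the paper has in mind: the paper simply states that the lemma ``follows readily from the definition'' without giving further details, and your unwinding of the lc condition along each $p$-exceptional $E_i$ is precisely that routine verification. The remark about the $c_i=0$ case and about why only an inequality (not equality) results is a nice clarification beyond what the paper records.
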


\subsection{Preparations for the proof of Theorem \ref{thm:Mab on log surface}}

The following result is a logarithmic generalization of \cite[Thm 2.14 (3)]{o}
(in the case of arithmetic surfaces).
\begin{lem}
\label{lem:Odaka log alpha }Let $(X,D)$ be a log Fano curve over
$\C$ and $(\mathcal{X},\mathcal{D})$ an arithmetic log Fano model
for $(X,D)$ such the fibers $\mathcal{X}_{b}$ of $\mathcal{X}$
are reduced and irreducible and the divisor $\mathcal{D}$ is horizontal
(i.e. $\mathcal{D}$ is the Zariski closure of $D)$. Assume that
\[
\alpha(\mathcal{X},\mathcal{D}):=\inf_{b,F}\text{lct \ensuremath{(\mathcal{X},\mathcal{D}+\mathcal{X}_{b};\,F)}}\geq1/2
\]
 where the inf runs over all effective $\Q-$divisors $F$ on $\mathcal{X}$
linearly equivalent to $-\mathcal{K}_{(\mathcal{X},\mathcal{D})}$
and closed points $b$ in the base $\mathcal{B}:=\text{Spec \ensuremath{\Z} }$
such that $F$ does not contain the support of $\mathcal{X}_{b}.$
Then 
\[
\frac{1}{2}\overline{\mathcal{L}}^{2}+\overline{\mathcal{K}}_{(\mathcal{X}',\mathcal{D})}\cdot\overline{\mathcal{L}}\geq\frac{1}{2}\overline{\mathcal{L}}^{2}+\overline{\mathcal{K}}_{(\mathcal{X},\mathcal{D})}\cdot\overline{\mathcal{L}}
\]
 for any relatively ample model $(\mathcal{X}',\mathcal{D}';\mathcal{L}')$
of $(X,D;-K_{(X,D)})$ and given metrics on $-K_{(X,D)}$ and $L.$
\end{lem}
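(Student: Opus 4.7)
\medskip

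The plan is to adapt Odaka's comparison argument \cite{o} to the logarithmic arithmetic surface setting. I would take a common log resolution $\pi\colon\mathcal{Y}\to\mathcal{X}$ and $\pi'\colon\mathcal{Y}\to\mathcal{X}'$ of the birational map $\mathcal{X}'\dashrightarrow\mathcal{X}$ (existence of such an $\mathcal{Y}$, and of a regular arithmetic surface dominating both normal models, is classical). Since $\mathcal{L}$ and $\mathcal{L}'$ restrict to the same $\Q$-line bundle $-K_{(X,D)}$ on the complex fiber, their pullbacks differ by a vertical $\Q$-divisor
\[
E:=\pi^{*}\mathcal{L}-\pi'^{*}\mathcal{L}',
\]
supported on finitely many closed fibers $\mathcal{Y}_{b}$ (here $b\in\mathrm{Spec}\,\Z$). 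Equipping both pullbacks with the same Hermitian metric (the pullback of the fixed metric on $-K_{(X,D)}$), the projection formula gives $\overline{\pi^{*}\mathcal{L}}^{2}=\overline{\mathcal{L}}^{2}$ and $\overline{\mathcal{K}}_{(\mathcal{X},\mathcal{D})}\cdot\overline{\mathcal{L}}=\pi^{*}\overline{\mathcal{K}}_{(\mathcal{X},\mathcal{D})}\cdot\overline{\pi^{*}\mathcal{L}}$, and similarly for the primed model; so I would work throughout on $\mathcal{Y}$.

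Next I would express the difference of the two arithmetic log Mabuchi functionals as a finite-place quantity. Writing $\mathcal{K}_{\mathcal{Y}/\mathcal{X}}-\pi^{*}\mathcal{D}=\sum a_{i}F_{i}$ and $\mathcal{K}_{\mathcal{Y}/\mathcal{X}'}-\pi'^{*}\mathcal{D}'=\sum a'_{i}F_{i}$ for the discrepancy divisors on $\mathcal{Y}$ (both vertical, supported on the fibers over the bad primes), a bilinearity computation yields an identity of the form
\[
\mathcal{M}_{(\mathcal{X}',\mathcal{D}')}(\overline{\mathcal{L}}')-\mathcal{M}_{(\mathcal{X},\mathcal{D})}(\overline{\mathcal{L}})=\tfrac{1}{2}E\cdot\bigl(\pi^{*}\mathcal{L}+\pi'^{*}\mathcal{L}'\bigr)+\bigl(\sum(a_{i}-a'_{i})F_{i}\bigr)\cdot\pi'^{*}\mathcal{L}',
\]
where all intersections are purely horizontal$\times$vertical and therefore computed by the usual geometric intersection form on each fiber (with the appropriate $\log\#\kappa(b)$ weights); in particular the Arakelov archimedean contributions cancel. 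The content is to show this finite-place quantity is non-negative.

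The third step is where $\alpha(\mathcal{X},\mathcal{D})\geq1/2$ enters. Fix a prime $b$ and restrict to the fiber $\mathcal{Y}_{b}$. Using Lemma \ref{Lemma:lct as inf}, the hypothesis lct $\geq1/2$ forces $a_{i}+1\geq\tfrac{1}{2}c_{i}$ for the vanishing order $c_{i}$ of $\pi^{*}F$ along each $\pi$-exceptional prime $F_{i}$, uniformly over all effective $F\sim_{\Q}-\mathcal{K}_{(\mathcal{X},\mathcal{D})}$. Specializing $F$ to an element whose pullback to $\mathcal{Y}$ has large vanishing along a prescribed $F_{i}$ (the existence of such $F$ is what forces one to exclude the case when $F$ contains $\mathcal{X}_{b}$, justifying the restriction in the definition of $\alpha$), one converts this into the Zariski-style numerical bound needed to sign each term in the fiberwise intersection: the key is the positivity of the self-intersection form on the vertical divisors of a fiber of an arithmetic surface, twisted by the $1/2$ gained from $\alpha\geq 1/2$. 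Summing the resulting fiberwise inequalities with the prime-by-prime weights gives the claim.

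The main obstacle, as in Odaka's original argument, is turning the lct bound into a genuinely numerical inequality on the resolution: one must verify that every vertical component of $E$ (and of $(a_{i}-a'_{i})F_{i}$) can be tested against an effective divisor $F\sim_{\Q}-\mathcal{K}_{(\mathcal{X},\mathcal{D})}$ that is non-degenerate along that component, which requires relative ampleness of $-\mathcal{K}_{(\mathcal{X},\mathcal{D})}$ together with the assumption that the fibers $\mathcal{X}_{b}$ are reduced and irreducible (so that horizontality of $\mathcal{D}$ prevents degenerate intersections with the excluded fiber component). Once that combinatorial step is cleared, the rest is an application of the standard negative-semidefiniteness of the intersection pairing on the vertical Neron--Severi lattice of an arithmetic surface, combined with the Arakelov-style bilinear identity set up in the second paragraph.
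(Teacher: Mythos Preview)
Your overall plan---pass to a common resolution, write the difference of Mabuchi functionals as a purely vertical intersection, and use the bound $\alpha\geq 1/2$ to control it---matches the paper's strategy, but the execution has a real gap at the final step.

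The paper does not use negative semidefiniteness of the fiberwise intersection form. After a reduction (which you skip, and which the paper imports from \cite[Prop.~3.10]{od0}), the comparison model takes the specific shape $(\mathcal{Y},p^{*}\mathcal{L}-E)$ with $p\colon\mathcal{Y}\to\mathcal{X}$ a blow-up, $E$ an \emph{effective} $p$-exceptional divisor whose support contains every $p$-exceptional prime, and such that for each relevant prime $b$ there is a global section $s_{b}$ of $p^{*}\mathcal{L}-E$ not vanishing identically on $\mathcal{Y}_{b}$. Writing $s_{b}=p^{*}s$ produces a \emph{single} $F=\operatorname{div}(s)\sim -\mathcal{K}_{(\mathcal{X},\mathcal{D})}$ with $p^{*}F\geq E$. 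This is what makes the lct bound bite: with $a_i,b_i,c_i$ the multiplicities of $\mathcal{K}_{\mathcal{Y}/\mathcal{X}}$, $\mathcal{Y}_b$, $E$ along $E_i$, Lemma~\ref{Lemma:lct as inf} applied to this $F$ (together with $b_i\geq 1$) yields $\alpha(\mathcal{X},\mathcal{D})\leq a_i/c_i$ for every $i$, i.e.\ the vertical divisor $\mathcal{K}_{\mathcal{Y}/\mathcal{X}}+q^{*}\mathcal{D}'-p^{*}\mathcal{D}-\tfrac12 E$ is \emph{effective}. The bilinear identity collapses to $\mathcal{L}'\cdot(\text{effective vertical divisor})$, which is $\geq 0$ simply because $\mathcal{L}'=p^{*}\mathcal{L}-E$ is relatively semi-ample.

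Your sketch replaces this with two ingredients that will not close the argument. First, without the reduction your $E=\pi^{*}\mathcal{L}-\pi'^{*}\mathcal{L}'$ is merely vertical, not a priori effective or exceptional, and you have no canonical $F$ with $p^{*}F\geq E$; ``specializing $F$ to have large vanishing along a prescribed $F_i$'' gives an inequality on $\mathrm{ord}_{F_i}(p^{*}F)$, not on the coefficients of $E$. Second, the Zariski step points the wrong way: once you expand (using that $p^{*}\mathcal{L}$ is trivial on exceptional divisors) the quantity to be signed is $-E\cdot\mathcal{K}_{\mathcal{Y}/\mathcal{X}}+\tfrac12 E^{2}$, and $E^{2}\leq 0$ works against you. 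The actual lever is effectivity plus relative nefness, not the intersection form on vertical cycles.
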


In the proof we fix once and for all metrics on $-K_{(X,D)}$ and
$L$ and set
\begin{equation}
\mathcal{M}_{(\mathcal{X},\mathcal{D})}(\mathcal{L}):=\frac{1}{2}\overline{\mathcal{L}}^{2}+\overline{\mathcal{K}}_{(\mathcal{X},\mathcal{D})}\cdot\overline{\mathcal{L}}\label{eq:more general log Mab}
\end{equation}
for the corresponding metrized lines bundles $\overline{\mathcal{L}}$
and $\overline{\mathcal{K}}_{(\mathcal{X},\mathcal{D})}.$ Thus $\mathcal{M}_{(\mathcal{X},\mathcal{D})}(\mathcal{L})$
specializes to the arithmetic log Mabuchi functional $\mathcal{M}_{(\mathcal{X},\mathcal{D})}(\overline{\mathcal{L}})$
(formula \ref{eq:def of log Mab surface intro}) precisely when the
metric on $\overline{\mathcal{K}}_{(\mathcal{X},\mathcal{D})}$ is
the one induced from the curvature form $\omega$ of $\overline{\mathcal{L}}.$
But here it will be convenient to consider the present more general
setup.

\subsubsection*{Proof when $D$ is trivial }

To fix ideas we first consider the case when $D$ is trivial. Set
$\mathcal{B}:=\text{Spec \ensuremath{\Z} }$ and $\mathcal{L}:=-\mathcal{K}_{\mathcal{X}}.$
To simplify the notation we will remove the bar indicating the metric
in the notation for the arithmetic intersection numbers. Anyhow, all
the arithmetic intersections will be computed over the closed points
$b$ in the base $\mathcal{B}$ and are thus independent of the choice
of metric (since they are proportional to the algebraic intersections
on the scheme $\pi^{-1}(b)$ over the residue field of $b$). If $F_{1}$
and $F_{2}$ are $\Q-$divisors we will write $F_{1}\geq F_{2}$ if
$F_{1}-F_{2}$ is effective.

\emph{Step 1:} \emph{It is enough to consider the case of a relatively
semi-ample model of the form $(\mathcal{X}',\mathcal{L}')=(\mathcal{Y},p^{*}\mathcal{L}-E)$
where $p:\mathcal{Y}\rightarrow\mathcal{X}$ is the blow-up along
a closed subscheme $\mathcal{Z}$ of $\mathcal{X}$ and $E$ is an
effective $p-$exceptional divisor on $\mathcal{Y}$ whose support
contains all the $p-$exceptional prime divisors and such that for
any $b\in\pi(\mathcal{Z})$ $p^{*}\mathcal{L}-E$ admits a global
section $s_{b}$ not vanishing identically along $\mathcal{Y}_{b}.$}

This is shown precisely as in the proof of \cite[Prop 3.10]{od0}
- for completeness a proof is given in Step 1 in Section \ref{subsec:Conclusion-of-the proof log surface}
below. 

\emph{Step2: The inequality holds in the case of Step 1.}

First observe that
\begin{equation}
\mathcal{M}_{\mathcal{Y}}(\mathcal{L}')-\mathcal{M}_{\mathcal{X}}(\mathcal{L})=\mathcal{L}'\cdot\left(\mathcal{K}_{\mathcal{Y}/\mathcal{X}}-\frac{1}{2}E\right).\label{eq:difference of Mab}
\end{equation}
Indeed, rewriting 
\[
\mathcal{M}_{\mathcal{X}}(\mathcal{L})=-\frac{\mathcal{L}^{2}}{2}+\cdot\mathcal{L}\cdot(\mathcal{L}+\mathcal{K}_{\mathcal{X}})
\]
(and likewise for $(\mathcal{Y},\mathcal{L}')$) the left hand side
in formula \ref{eq:difference of Mab} may be expressed as
\[
\frac{p^{*}\mathcal{L}^{2}-\mathcal{L}'^{2}}{2}+\mathcal{L}'\cdot(\mathcal{K}_{\mathcal{Y}/\mathcal{X}}-E)=\frac{E\cdot(p^{*}\mathcal{L}+\mathcal{L}')}{2}+\mathcal{L}'\cdot(\mathcal{K}_{\mathcal{Y}/\mathcal{X}}-E).
\]
 Since $E\cdot p^{*}\mathcal{L}=0$ this proves formula \ref{eq:difference of Mab}.

Since $\mathcal{L}'$ is relatively semi-ample it will thus be enough
to show that the vertical exceptional divisor $\mathcal{K}_{\mathcal{Y}/\mathcal{X}}-\frac{1}{2}E$
is effective. This means, by the assumption on $\alpha(\mathcal{X})(:=\alpha(\mathcal{X},0)),$
that it is enough to show that 
\begin{equation}
\mathcal{K}_{\mathcal{Y}/\mathcal{X}}-\alpha(\mathcal{X})E\geq0\label{eq:K-alphaE is nonneg}
\end{equation}
To fix ideas first assume that $\pi(\mathcal{Z})$ is supported on
a single point that we denote by $b.$ By Step 1, we can express $s_{b}=p^{*}s$
for a global section $s$ of $\mathcal{L}\rightarrow\mathcal{X}$
whose zero-divisor $F$ does not vanish identically on $\mathcal{X}_{b}$
and such that $p^{*}F-E$ is effective. Since $F$ is a contender
for the inf defining $\alpha(\mathcal{X})$ we have 

\[
\alpha(\mathcal{X})\leq\text{lct}(\mathcal{X},\mathcal{X}_{b};\,F),
\]
Next, since $p^{*}F\geq E$ it follows from Lemma \ref{Lemma:lct as inf}
that 
\[
\text{lct}(\mathcal{X},\mathcal{X}_{b};\,F)\leq\inf_{i}\frac{a_{i}+1-b_{i}}{c_{i}}
\]
 where $i$ runs over the $p-$exceptional irreducible prime divisors
$E_{i}$ of $\mathcal{Y}$ and $a_{i},b_{i}$ and $c_{i}$ denote
the order of vanishing along $E_{i}$ of $\mathcal{K}_{\mathcal{Y}/\mathcal{X}},$
$\mathcal{Y}_{b}$ and $E$ respectively. Note that $c_{i}>0$ (since
the support of $E$ contains the support of all $p-$exceptional divisors)
and $b_{i}\geq1$ (since $\mathcal{Z}$ is assumed to be supported
in $\mathcal{X}_{b}).$ Thus
\[
\alpha(\mathcal{X})\leq\inf_{i}\frac{a_{i}+1-b_{i}}{c_{i}}\leq\inf_{i}\frac{a_{i}}{c_{i}},
\]
giving
\[
\mathcal{K}_{\mathcal{Y}/\mathcal{X}}-\alpha(\mathcal{X})E\geq\sum_{i}a_{i}E_{i}-\left(\min_{j}\frac{a_{j}}{c_{j}}\right)E_{i}=\sum_{i}\left(\frac{a_{i}}{c_{i}}-\left(\min_{j}\frac{a_{j}}{c_{j}}\right)\right)c_{i}E_{i}\geq0,
\]
 which proves \ref{eq:K-alphaE is nonneg}. Finally, consider the
general case when the support of $\pi(\mathcal{Z})$ consists of a
finite number of points $b_{m}$ in $\mathcal{B}.$ We then split
the vertical divisors $\mathcal{K}_{\mathcal{Y}/\mathcal{X}}$ and
$E$ into the components $\mathcal{K}_{\mathcal{Y}/\mathcal{X}}^{(m)}$
and $E^{(m)}$ over $b_{m}:$
\[
\mathcal{K}_{\mathcal{Y}/\mathcal{X}}-\alpha(\mathcal{X})E=\sum_{m}\mathcal{K}_{\mathcal{Y}/\mathcal{X}}^{(m)}-\alpha(\mathcal{X})E^{(m)}
\]
 and apply the previous bound for each fixed $m$ (with $b$ replaced
by $b_{m})$ to get that $\mathcal{K}_{\mathcal{Y}/\mathcal{X}}^{(m)}-\alpha(\mathcal{X})E^{(m)}\geq0$
and thus $\mathcal{K}_{\mathcal{Y}/\mathcal{X}}-\alpha(\mathcal{X})E\geq0,$
as desired. 

\subsubsection*{Proof for log pairs }

Just as in the previous case it is enough to consider the special
case of Step 2. In this case formula \ref{eq:difference of Mab} readily
generalizes to
\begin{equation}
\mathcal{M}_{(\mathcal{Y},q^{*}\mathcal{D}')}(\mathcal{L}')-\mathcal{M}_{(\mathcal{X},\mathcal{D})}(\mathcal{L})=\mathcal{L}'\cdot\left(\mathcal{D}'-p^{*}\mathcal{D}+\mathcal{K}_{\mathcal{Y}/\mathcal{X}}-\frac{1}{2}E\right).\label{eq:difference Mab}
\end{equation}
 As before it will thus be enough to show that
\begin{equation}
\mathcal{K}_{\mathcal{Y}/\mathcal{X}}+q^{*}\mathcal{D}'-p^{*}\mathcal{D}-\alpha(\mathcal{X},\mathcal{D})E\geq0.\label{eq:log K minus alpha E}
\end{equation}
 To simplify the exposition we will assume that $\pi(\mathcal{Z})$
is a single closed point in $\mathcal{B},$ denoted by $b$ (the general
case is shown in a similar way by decomposing wrt the components of
$\pi(\mathcal{Z})$ as above). By the definition of $\alpha(\mathcal{X},\mathcal{D})$
\begin{equation}
\alpha(\mathcal{X},\mathcal{D})\leq\text{lct}(\mathcal{X},\mathcal{D}+\mathcal{X}_{b};\,F),\label{eq:upper bound on log alpha}
\end{equation}
Next, since $p^{*}F-E$ is effective, i.e. $p^{*}F\geq E$ Lemma \ref{Lemma:lct as inf}
yields
\[
\text{lct}(\mathcal{X},\mathcal{D}+\mathcal{X}_{b};\,F)\leq\inf_{i}\frac{a_{i}+1-d_{i}-b_{i}}{c_{i}},
\]
where $a_{i},b_{i},$ $c_{i}$ and $d_{i}$ are the order of vanishing
along $E_{i}$ of $\mathcal{K}_{\mathcal{Y}/\mathcal{X}},$ $\mathcal{Y}_{b}$,
$E$ and $p^{*}\mathcal{D}$ respectively. In particular, $b_{i}\geq1$
since $\mathcal{Z}$ is supported in $\mathcal{X}_{b},$ Hence,
\[
\alpha(\mathcal{X},\mathcal{D})\leq\inf_{i}\frac{a_{i}+1-d_{i}-b_{i}}{c_{i}}.
\]
Next, we may decompose 
\[
p^{*}\mathcal{D}=\left(p^{*}\mathcal{D}\right)_{\text{hor}}+(p^{*}\mathcal{D})_{\text{ex}},
\]
 where $\left(p^{*}\mathcal{D}\right)_{\text{hor}}$ is the horizontal
divisor obtained as the proper transform of the horizontal divisor
$\mathcal{D}$ and $(p^{*}\mathcal{D})_{\text{ex}}$ is $p-$exceptional.
By \ref{eq:upper bound on log alpha} $\mathcal{K}_{\mathcal{Y}/\mathcal{X}}-(p^{*}\mathcal{D})_{\text{ex}}-\alpha E_{\text{ex}}\geq$
\[
\geq\sum_{i}(a_{i}-d_{i})E_{i}-\left(\min_{j}\frac{a_{j}-b_{j}+1-d_{j}}{c_{j}}\right)c_{i}E_{i}\geq\sum_{i}\left(\frac{(a_{i}-d_{i})}{c_{i}}E_{i}-\left(\min_{j}\frac{a_{j}-d_{j}}{c_{j}}\right)\right)c_{i}E_{i}\geq0
\]
 using that $b_{j}\geq1.$ Hence, 
\[
\mathcal{K}_{\mathcal{Y}/\mathcal{X}}+\mathcal{D}'-p^{*}\mathcal{D}-\alpha(\mathcal{X},\mathcal{D})E_{\text{ex}}\geq\mathcal{D}'-\left(p^{*}\mathcal{D}\right)_{\text{hor}}.
\]
 But, since both $(\mathcal{X}',\mathcal{D}')$ and $(\mathcal{X},\mathcal{D})$
are models for $(X,\Delta)$ and $\mathcal{D}$ is assumed horizontal
it follows that $q^{*}\mathcal{D}'-\left(p^{*}\mathcal{D}\right)_{\text{hor}}$
is an effective vertical divisor and hence 
\[
q^{*}\mathcal{D}'-\left(p^{*}\mathcal{D}\right)_{\text{hor}}\geq0,
\]
 which concludes the proof of the inequality \ref{eq:log K minus alpha E}. 
\begin{lem}
\label{lem:alpha geq one half}If $(X,\Delta)$ is a K-semistable
log Fano curve over $\C,$ then the canonical model $(\P_{\Z}^{1},\mathcal{D}_{c})$
of $(X,\Delta)$ satisfies
\[
\alpha(\mathcal{X},\mathcal{D})\geq1/2
\]
 and the inequality is strict if $(X,\Delta)$ is K-stable.
\end{lem}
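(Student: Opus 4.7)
The plan is to apply inversion of adjunction along each special fiber $\mathcal{X}_p$, thereby reducing the log canonical threshold computation on the arithmetic surface $\mathcal{X}=\P_{\Z}^1$ to a log canonical threshold on the smooth curve $\mathcal{X}_p\cong\P_{\F_p}^1$, where the K-semistability hypothesis translates directly into the required bound.

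Fix a prime $p$ and an effective $\Q$-divisor $F$ on the generic fiber $X=\P^1$ with $F\sim_{\Q}-K_{(X,\Delta)}$; its Zariski closure $\mathcal{F}$ in $\mathcal{X}$ is horizontal and in particular does not contain any vertical fiber. It suffices to show that $(\mathcal{X},\mathcal{D}_{c}+\mathcal{X}_{p}+\tfrac{1}{2}\mathcal{F})$ is log canonical. Since $\mathcal{X}=\P_{\Z}^1$ is smooth over $\Z$, the fiber $\mathcal{X}_{p}$ is a smooth Cartier divisor, and the three horizontal sections $H_{0},H_{1},H_{\infty}$ comprising $\mathcal{D}_{c}$ meet $\mathcal{X}_{p}$ transversally at three distinct $\F_{p}$-rational points $\bar 0,\bar 1,\bar\infty$. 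The different $\mathrm{Diff}_{\mathcal{X}_{p}}(\mathcal{D}_{c})$ therefore equals the plain restriction $\sum_{i} w_{i}[\bar p_{i}]$, and Koll\'ar's inversion of adjunction reduces log canonicity near $\mathcal{X}_{p}$ to log canonicity of the one-dimensional pair
\[
\bigl(\P_{\F_{p}}^{1},\,\textstyle\sum_{i} w_{i}[\bar p_{i}]+\tfrac{1}{2}\mathcal{F}|_{\mathcal{X}_{p}}\bigr),
\]
in which $\mathcal{F}|_{\mathcal{X}_{p}}$ is effective of degree $V:=2-\sum_{i} w_{i}$ on $\P_{\F_{p}}^{1}$.

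On this smooth curve the log canonical threshold is computed pointwise as $\min_{q}(1-w_{i(q)})/\mathrm{mult}_{q}\mathcal{F}|_{\mathcal{X}_{p}}$, where $w_{i(q)}=w_{i}$ at $q=\bar p_{i}$ and $0$ elsewhere. The estimate $\mathrm{mult}_{q}\mathcal{F}|_{\mathcal{X}_{p}}\leq\deg\mathcal{F}|_{\mathcal{X}_{p}}=V$ is automatic, while Fujita's K-semistability criterion for hyperplane arrangements (the $n=1$ case of \ref{eq:weight cond}) identifies K-semistability of $(X,\Delta)$ with the inequalities $w_{i}\leq\tfrac{1}{2}\sum_{j} w_{j}$, equivalently $1-w_{i}\geq V/2$. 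Combined with the trivial bound $V\leq 2$, these give $(1-w_{i})/V\geq 1/2$ and $1/V\geq 1/2$, so the lct is at least $1/2$. Running the same inversion-of-adjunction argument at every other prime $q$ takes care of log canonicity of $(\mathcal{X},\mathcal{D}_{c}+\tfrac{1}{2}\mathcal{F})$ away from $\mathcal{X}_{p}$, completing the proof that $\alpha(\mathcal{X},\mathcal{D}_{c})\geq 1/2$.

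For the strict inequality, K-stability of $(X,\Delta)$ upgrades the above to the strict $w_{i}<\tfrac{1}{2}\sum_{j} w_{j}$, so $(1-w_{i})/V>1/2$; furthermore, K-stability forces $\Delta\neq 0$ (since $(\P^{1},0)$ admits a $\mathrm{PGL}_{2}$-action and is only K-semistable), whence $V<2$ and so also $1/V>1/2$. Both strict inequalities propagate to yield $\alpha(\mathcal{X},\mathcal{D}_{c})>1/2$. The main technical delicacy is the inversion-of-adjunction step in the arithmetic setting, which rests on the smoothness of $\P_{\Z}^{1}$ over $\Z$ and on the transversality of the horizontal boundary $\mathcal{D}_{c}$ to the fibers---both features are built into the canonical model by construction, and the required input on differents and inversion of adjunction is the standard one treated in \cite{ko}.
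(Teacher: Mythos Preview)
Your argument is correct and follows the same route as the paper: inversion of adjunction along each fiber $\mathcal{X}_p \cong \P^1_{\F_p}$ reduces the problem to a one-dimensional log canonical threshold, which is then bounded below by $1/2$ via the K-semistability inequality $w_i \leq \tfrac{1}{2}\sum_j w_j$ (equivalently $1-w_i \geq V/2$). The paper phrases the final step by reducing to the extremal case $F|_{\mathcal{X}_p} = V\cdot[x]$ for a single closed point $x$, whereas you bound multiplicities by the total degree directly; note also that in the paper's definition $F$ ranges over effective $\Q$-divisors on the surface $\mathcal{X}$ rather than closures from the generic fiber, but since vertical components over primes $q\neq p$ do not meet $\mathcal{X}_p$ this distinction is immaterial for the restricted computation.
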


\begin{proof}
By inversion of adjunction on surfaces over excellent schemes \cite{ta}
\[
\text{\text{lct \ensuremath{(\mathcal{X},\mathcal{D}+\mathcal{X}_{b};\,F)}}}=\text{lct \ensuremath{(\mathcal{X},\mathcal{D}_{|\mathcal{X}_{b}};F_{|\mathcal{X}_{b}})}},
\]
 if $F$ does not contain the support of the divisor $\mathcal{X}_{b}.$
In the present case $\mathcal{X}_{b}=\P_{\F_{b}}^{1},$ where $b$
has been identified with a prime number and $\F_{b}$ denotes the
field with $b$ elements. Decomposing 
\[
\mathcal{D}_{c}=\sum w_{i}\mathcal{D}_{i}
\]
the K-semistability assumption is, by \ref{eq:weight cond}, equivalent
to the condition 
\begin{equation}
w_{j}\leq\frac{1}{2}\sum_{i}w_{i},\,\,\,\forall j.\label{eq:weight cond in pf log alpha geq half}
\end{equation}
We recall that for any curve $C$ over a perfect field (here taken
to be $\P_{\F_{b}}^{1}$) an effective $\Q-$divisor $F$ on $C$
is lc iff all its coefficients are less then are equal to one \cite{ko,ta}.
Since $-K_{\P_{\F_{b}}^{1}}$ is linearly equivalent to $\mathcal{O}(2)$
and $D_{i|\mathcal{X}_{b}}$ is linearly equivalent to $\mathcal{O}(1)$
it thus follows from the weight condition \ref{eq:weight cond in pf log alpha geq half}
that $\text{lct}\ensuremath{(\mathcal{X},\mathcal{D}_{|\mathcal{X}_{b}};F_{|\mathcal{X}_{b}})}\geq1/2.$
Indeed, it is enough to consider the case when $F=(2-\sum_{i}w_{i})[x],$
where $[x]$ denotes the prime divisor on $\P_{\F_{b}}^{1}$ corresponding
to a closed point $x$ in $\P_{\F_{b}}^{1}.$ Then 
\[
\frac{1}{2}F+\mathcal{D}_{|\mathcal{X}_{b}}=(1-\frac{1}{2}\sum_{i}w_{i})[x]+\sum w_{i}[x_{i}].
\]
 The definition of $\mathcal{D}_{c}$ ensures that $[x_{i}]=[x_{j}]$
iff $i=j.$ In the case that $x\neq x_{i}$ for any $i$ the coefficients
of $F/2+\mathcal{D}_{|\mathcal{X}_{b}}$ are indeed less than or equal
to $1,$ since $w_{i}\in[0,1].$ Moreover, if $x=x_{j}$ then the
coefficient of index $j$ equals $(1-\frac{1}{2}\sum_{i}w_{i})+w_{j}$
which is less than are equal to $1,$ by the weight condition \ref{eq:weight cond in pf log alpha geq half}.

We will also need the following lemma, shown precisely as in the case
when $\mathcal{D}=0$ considered in \cite[Prop 5.3]{ab}. 
\end{proof}
\begin{lem}
\label{lem:inf is inf}Let $(\mathcal{X},\mathcal{D};\mathcal{L})$
be a polarized arithmetic log surfaces $(\mathcal{X},\mathcal{D};\mathcal{L})$
such that the complexification $(X,\Delta)$ of $(\mathcal{X},\mathcal{D})$
is a Fano variety and $\mathcal{L}\otimes\C=-K_{(X,\Delta)}.$ A metric
realizes the infimum 
\[
\inf_{\left\Vert \cdot\right\Vert }\mathcal{M}_{(\mathcal{X},\mathcal{D})}(\mathcal{L},\left\Vert \cdot\right\Vert )
\]
 over all locally bounded metrics on $-(K_{(X,D)})$ with positive
curvature current iff the metric is a log Kähler-Einstein metric.
In particular, in the case when $D=0$ any minimizer coincides with
the Fubini-Study metric up to the application of an automorphism of
$X$ and a scaling of the metric. Moreover, 

\[
\inf_{\left\Vert \cdot\right\Vert }\mathcal{M}_{(\P_{\Z}^{1},\mathcal{D})}\left(-\mathcal{K}_{(\P_{\Z}^{1},\mathcal{D})},\left\Vert \cdot\right\Vert \right)=-\sup_{\left\Vert \cdot\right\Vert }\frac{1}{2}\left(-\mathcal{K}_{(\P_{\Z}^{1},\mathcal{D})},\left\Vert \cdot\right\Vert \right)^{2}
\]
 where the sup in the right hand side is restricted to volume-normalized
metrics.
\end{lem}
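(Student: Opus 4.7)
The plan is to follow the proof of \cite[Prop 5.3]{a-b} verbatim, adapting each step to the log setting. The core idea is that the arithmetic log Mabuchi functional differs from its purely complex-analytic counterpart only by a constant (arising from intersections over the finite fibers), so the minimization problem reduces to one over the space of psh metrics on $-(K_X+\Delta)$ where standard K\"ahler-geometric techniques apply.

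First I would fix a reference continuous psh metric $\phi_0$ on $L=-(K_X+\Delta)$ (for example one induced from an integral model by the Fubini-Study metric on $-\mathcal{K}_{\P^1_\Z}$, pushed along a finite map if necessary). Using formula \ref{eq:def of primitive} to differentiate $\overline{\mathcal{L}}^2$ and a parallel formula for the mixed term $\overline{\mathcal{K}}_{(\mathcal{X},\mathcal{D})}\cdot\overline{\mathcal{L}}$, where the metric on $\overline{\mathcal{K}}_{(\mathcal{X},\mathcal{D})}$ is the volume-normalized one induced from $\phi$, one obtains
\[
\mathcal{M}_{(\mathcal{X},\mathcal{D})}(\mathcal{L},\phi)-\mathcal{M}_{(\mathcal{X},\mathcal{D})}(\mathcal{L},\phi_0)=\mathcal{M}^{\mathrm{an}}_{(X,\Delta)}(\phi,\phi_0),
\]
where $\mathcal{M}^{\mathrm{an}}_{(X,\Delta)}$ is the classical analytic log Mabuchi functional on the space of finite-energy psh metrics on $L$. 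This is the same derivation as in the $\mathcal{D}=0$ case, with $\mathcal{D}$ entering only through the local weights of the measure $\mu_\phi$ described in Section \ref{subsec:Local-representations-of}. The scaling property \ref{eq:scaling of chi vol}, together with the fact that $\mathcal{M}$ only depends on $\phi$ through volume-normalized data, shows that the infimum over locally bounded psh metrics coincides with the infimum over volume-normalized ones, so no generality is lost.

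Next I would invoke the by-now standard fact, due to Berman-Boucksom-Eyssidieux-Guedj-Zeriahi and Berman-Berndtsson in the log setting, that $\mathcal{M}^{\mathrm{an}}_{(X,\Delta)}$ is convex along weak geodesics of finite-energy psh metrics and attains its infimum exactly at the log K\"ahler-Einstein metrics (when they exist). Combined with the identity above, this proves the first assertion of the lemma. When $\mathcal{D}=0$ and $X=\P^1$, Bando-Mabuchi uniqueness (trivial in dimension one) forces any such minimizer to coincide with the Fubini-Study metric up to an automorphism of $\P^1$ and an additive constant, yielding the second assertion.

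Finally, for the identity relating the infimum to the supremum, note that at a volume-normalized metric $\phi$ the induced metric on $\overline{\mathcal{K}}_{(\mathcal{X},\mathcal{D})}$ is $-\phi$ (up to sign conventions), so expanding the definition \ref{eq:def of log Mab surface intro} gives $\mathcal{M}_{(\mathcal{X},\mathcal{D})}(\mathcal{L},\phi)=-\tfrac{1}{2}\overline{\mathcal{L}}^2$ at any such metric whose curvature represents $-K_{(X,\Delta)}$ correctly. Taking infimum on the left and supremum on the right (which are both attained at the log K\"ahler-Einstein metric, on $\P^1_\Z$ with any K-polystable $\mathcal{D}$) yields the claimed equality. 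The only mildly delicate point is the careful bookkeeping in the first step to verify that the constant correction term is independent of $\phi$ and that everything extends continuously from smooth to merely continuous psh metrics; this is handled, just as in \cite{a-b}, by the Bedford-Taylor--type regularization available on a one-dimensional base.
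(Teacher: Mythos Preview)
Your proposal is correct and takes essentially the same approach as the paper: the paper's entire proof is the sentence ``shown precisely as in the case when $\mathcal{D}=0$ considered in \cite[Prop 5.3]{a-b},'' and you have simply unpacked what that means in the log setting. Your elaboration---reducing to the analytic log Mabuchi functional up to a metric-independent constant, invoking the convexity/minimization results of \cite{bbegz} at log K\"ahler-Einstein metrics, and reading off the final identity from the fact that both the infimum of $\mathcal{M}$ and the supremum of the height are realized at the same volume-normalized KE metric where $\mathcal{M}=-\tfrac{1}{2}\overline{\mathcal{L}}^{2}$---is exactly the intended argument.
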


\subsection{\label{subsec:Conclusion-of-the proof log surface}Conclusion of
the proof of Theorem \ref{thm:main log toric intro} and Corollary
\ref{cor:Conjecture holds for log Fano surfaces}}

Combining the previous first two lemmas immediately yields 
\begin{equation}
\mathcal{M}_{(\mathcal{X}',\mathcal{D}')}(\overline{\mathcal{L}'})\geq\mathcal{M}_{(\P_{\Z}^{1},\mathcal{D})}(\overline{-\mathcal{K}_{(\P_{\Z}^{1},\mathcal{D})}}).\label{eq:ineq log Mab in pf optimal}
\end{equation}
Applying the third lemma above thus gives 
\[
\mathcal{M}_{(\mathcal{X}',\mathcal{D}')}(\overline{\mathcal{L}'})\geq-\sup_{\left\Vert \cdot\right\Vert }\left(\frac{1}{2}\left(-\mathcal{K}_{(\P_{\Z}^{1},\mathcal{D})},\left\Vert \cdot\right\Vert \right)^{2}\right)
\]
 where the infimum in the left hand side is restricted to volume-normalized
metrics. Invoking Theorem \ref{thm:hyperplane intro} and using that
the Fubini-Study metric is a minimizer when $\mathcal{D}=0$ (by Lemma
\ref{lem:inf is inf}) thus proves the inequality in Theorem \ref{thm:Mab on log surface}
and it corollary. Moreover, Theorem \ref{thm:hyperplane intro} implies
that the inequality is strict, as soon as $D$ is non-trivial. 

\subsubsection{\label{subsec:The-equality-case}The equality case}

Consider now the case of equality in Theorem \ref{thm:Mab on log surface}
(and, as a consequence, $D=0):$
\begin{equation}
\mathcal{M}_{(\mathcal{X}',\mathcal{D}')}(\overline{\mathcal{L}'})=\mathcal{M}_{(\P_{\Z}^{1},0)}(\overline{-\mathcal{K}_{\P_{\Z}^{1}}})\label{eq:equality Mab in pf of equality case}
\end{equation}
where, in the rhs, $-\mathcal{K}_{\P_{\Z}^{1}}$ is endowed with the
Fubini-Study metric. By the minimizing property in Lemma \ref{Lemma:lct as inf},
when $D=0,$ the metric on $\mathcal{L}$ coincides with the Fubini-Study
metric up to the application of an automorphism of $X$ and scaling
of the metric. All that remains is to show is thus that $(\mathcal{X}',\mathcal{D}')$
is isomorphic to $(\P_{\Z}^{1},0).$ To this end first note that since
$\mathcal{X}(=\P_{\Z}^{1})$ and $\mathcal{X}'$ have the same generic
fiber they are birationally equivalent. Thus, there exists a normal
variety $\mathcal{Y},$ which is flat and projective over $\mathcal{B},$
dominating both $\mathcal{X}$ and $\mathcal{X}',$ with birational
morphisms 
\begin{equation}
p:\mathcal{Y}\rightarrow\mathcal{X},\,\,\,q:\mathcal{Y}\rightarrow\mathcal{X}'\label{eq:morphisms p and q}
\end{equation}
which are the identity over the generic point in $\mathcal{B}$ (a
concrete construction is given in Step 1 below). It will thus be enough
to show that the equality \ref{eq:equality Mab in pf of equality case}
implies that $p$ can be taken to be an isomorphism. Indeed, if $p$
is an isomorphism we get a birational morphism $q$ from $\P_{\Z}^{1}$
to $\mathcal{X}'$ and any such morphism is an isomorphism (since
the fibers of $\P_{\Z}^{1}$ over $\mathcal{B}$ are all reduced and
irreducible). Moreover, when $\mathcal{X}'$ is equal to $\P_{\Z}^{1}$
any $\mathcal{L}$ whose complexification equals $-K_{\P^{1}}$ is
isomorphic to $-\mathcal{K}_{\P_{\Z}^{1}}$ and the components of
any divisor $\mathcal{D}'$ on $\mathcal{X}'$ whose complexification
is trivial are fibers $\mathcal{X}_{b_{i}}$ of $\P_{\Z}^{1}$ (using
again that the fibers of $\P_{\Z}^{1}$ over $\mathcal{B}$ are all
reduced and irreducible). Hence, the assumed equality \ref{eq:equality Mab in pf of equality case}
implies, since $\mathcal{X}_{b_{i}}^{2}=0$ and $-\mathcal{K}_{\P_{\Z}^{1}}$
is relatively ample that $\mathcal{D}'$ is trivial, i.e. $\mathcal{D}'=0.$

Thus all that remains is to show that the assumed equality in formula
\ref{eq:equality Mab in pf of equality case} implies that the morphism
$p:\mathcal{Y}\rightarrow\mathcal{X}$ (in formula \ref{eq:morphisms p and q})
can be taken to be an isomorphism. 

\emph{Step 1: In the case of arithmetic surfaces $p:\,\mathcal{Y}\rightarrow\mathcal{X}$
can be taken as the successive blow-ups of $\mathcal{X}$ along a
finite number of closed points $x_{i}$ in regular surfaces $\mathcal{X}_{i}$
and there exists a $p-$exceptional and $p-$ample effective divisor
$E$ on $\mathcal{Y}$ and a morphism $q$ from }$\mathcal{Y}$ to
$\mathcal{X}$ \emph{such that $q^{*}\mathcal{L}'=p^{*}\mathcal{L}-E.$
In particular, $\mathcal{M}_{\mathcal{X}'}(\mathcal{L}')=\mathcal{M}_{\mathcal{Y}}(p^{*}\mathcal{L}-E).$}

This is shown as in the beginning of the proof of \cite[Prop 3.10]{od0},
as next explained. First note that, since $\mathcal{X}$ and $\mathcal{X}'$
have the same generic fiber, they are birationally equivalent. Since
$\mathcal{X}$ is normal this means that there exists a morphism $h:U\rightarrow\mathcal{X}'$
from a Zariski open subset $U$ in $\mathcal{X}$ of codimension two.
As a consequence, $h^{*}\mathcal{L}'$ extends to a $\Q-$line bundle
$\mathcal{L}''$ on $\mathcal{X}$ coinciding with $-\mathcal{K}_{\mathcal{X}}$
on the generic fiber. Since $\mathcal{X}=\P_{\Z}^{1}$ this implies
that $\mathcal{L}''$ is isomorphic to $-\mathcal{K}_{\mathcal{X}}$
(using that $\pi:\mathcal{X}\rightarrow\text{Spec \ensuremath{\Z}}$
has reduced irreducible fibers). Now fix a positive integer $k$ such
that $k\mathcal{L}'$ is a relatively very ample line bundle and take
a basis $s'_{i}$ in the free $\Z-$module $H^{0}(\mathcal{X}',k\mathcal{L}'$).
Then $s_{i}:=h^{*}s_{i}$ extends, since $\mathcal{X}$ is normal,
to a unique element in $H^{0}(\mathcal{X},k\mathcal{L}).$ Denote
by $\mathcal{J}$ the ideal sheaf on $\mathcal{X}$ generated by the
sections $s_{i}.$ Since $\mathcal{X}$ is a regular surface we get
after successive blow-ups (as stated in Step 1) a morphism $p:\mathcal{Y}\rightarrow\mathcal{X}$
from a regular surface $\mathcal{Y}$ to $\mathcal{X}$ with the property
that $p^{*}\mathcal{J}$ defines an effective $p-$exceptional divisor
$E_{k}$ on $\mathcal{Y}$ (using that $\Z$ is an excellent ring)
\cite{st}. Set 
\[
E:=k^{-1}E_{k}\,\,\,\,(E_{k}:=p^{*}\mathcal{J})
\]
By construction, $E_{k}$ is $p-$ample, 
\begin{equation}
H^{0}(\mathcal{Y},kp^{*}\mathcal{L}-E_{k})\cong H^{0}(\mathcal{X},k\mathcal{L}\otimes\mathcal{J})\cong H^{0}(\mathcal{X}',k\mathcal{L}')\label{eq:isom}
\end{equation}
 and the global sections of $kp^{*}\mathcal{L}-E_{k}$ induce a morphism
$q$ to $\mathcal{X}'$ such that $q^{*}\mathcal{L}'=p^{*}\mathcal{L}-E.$
Finally, note that
\[
\mathcal{M}_{\mathcal{X}'}(\mathcal{L}')=\mathcal{M}_{\mathcal{Y}}(q^{*}\mathcal{L}'),
\]
 as follows directly from the fact that $p$ is an isomorphism between
Zariski open subsets of $\mathcal{X}'$ and $\mathcal{Y}$ and, as
a consequence, the $\Q-$line bundle $q^{*}\mathcal{L}'$ is trivial
on the support of the divisor $q^{*}\mathcal{K}_{\mathcal{X}}-\mathcal{K}_{\mathcal{Y}}.$

\emph{Step 2: $\mathcal{M}_{\mathcal{Y}}(p^{*}\mathcal{L}-E)=\mathcal{M}_{\mathcal{X}}(\mathcal{L})\implies p\text{ is an isomorphism, when \ensuremath{\mathcal{X}=\P_{\Z}^{1}}}$}

Replacing $q^{*}\mathcal{L}'$ with $p^{*}\mathcal{L}-E$ in formula
\ref{eq:difference of Mab} yields, since $\mathcal{M}_{\mathcal{Y}}(p^{*}\mathcal{L}-E)=\mathcal{M}_{\mathcal{X}}(\mathcal{L}),$

\[
(p^{*}\mathcal{L}-E)\cdot\left(\mathcal{K}_{\mathcal{Y}/\mathcal{X}}-\frac{1}{2}E\right)=0.
\]
It follows, since, by construction, $p^{*}\mathcal{L}-E$ is $p-$ample,
that
\begin{equation}
\mathcal{K}_{\mathcal{Y}/\mathcal{X}}=\frac{1}{2}E.\label{eq:pf equality case identity}
\end{equation}
Now, since $p:\,\mathcal{Y}\rightarrow\mathcal{X}$ is the blow-up
along a finite number of closed points $x_{i}$ in regular surfaces
$\mathcal{X}_{i},$ 
\begin{equation}
\mathcal{K}_{\mathcal{Y}/\mathcal{X}}=\sum c_{i}E_{i},\,\,\,c_{i}\geq1\label{eq:pf equality case ineq}
\end{equation}
 where the sum runs over all prime $p-$exceptional divisors $E_{i}.$
Hence, 
\[
E=\sum_{i}2c_{i}E_{i}\geq\sum_{i}2E_{i}.
\]
But this contradicts the isomorphisms \ref{eq:isom}, if the number
of points $x_{i}$ is non-zero. Indeed, denote by $E_{1}$ the strict
transform of the $p-$exceptional divisor on $\mathcal{Y}$ induced
from the exceptional divisor on the first point $x_{1}$ blown-up
on $\mathcal{X}(=\P_{\Z}^{1}).$ Then it it follows from the previous
inequality and the construction of $E$ that the restriction of the
ideal sheaf $\mathcal{J}$ on $\mathcal{X}$ to a neighbourhood of
$x_{1}$ in the fiber $\mathcal{X}_{\pi(x_{1})}$ is contained in
the $2k$th power $\ensuremath{\mathfrak{m}_{x_{1}}^{2k}}$ of the
maximal ideal $\mathfrak{m}_{x_{1}}$ on $\mathcal{X}_{\pi(x_{1})}$
defined by the point $x_{1}.$ But, in general, for $\mathcal{X}=\P_{\Z}^{1},$
the line bundle $k\mathcal{L}_{|\mathcal{X}_{\pi(x)}}\otimes\mathfrak{m}_{x}^{2k}$
on $\mathcal{X}_{\pi(x)}$ is trivial for any closed point $x$ on
$\mathcal{X}$ (since $\mathcal{L}_{|\mathcal{X}_{b}}:=-K_{\mathcal{X}_{b}}=\mathcal{O}_{\P_{\F_{b}}^{1}}(2)).$
But this contradicts the isomorphism \ref{eq:isom}, since $\mathcal{L}'$
is relatively ample. Hence, the number of points $x_{i}$ must be
zero, as desired. 

Combing these two steps thus concludes, as discussed above, the proof
of Theorem \ref{thm:Mab on log surface}. Finally, Corollary \ref{cor:Conjecture holds for log Fano surfaces}
can be deduced from Theorem \ref{thm:Mab on log surface} using a
generalization of Lemma \ref{lem:inf is inf}. But here we instead
proceeds as follows. Given an arithmetic log Fano surface $(\mathcal{X},\mathcal{D})$
set $\mathcal{L}:=-\mathcal{K}_{\mathcal{X}}$ and endow $\mathcal{L}$
and $-\mathcal{K}_{\mathcal{X}}$ with the same metric induced from
a volume-normalized metric on $-K_{X}$ with positive curvature current.
Then, by definition \ref{eq:more general log Mab},
\[
-\frac{1}{2}\overline{\mathcal{K}}_{(\mathcal{X},\mathcal{D})}^{2}=\mathcal{M}_{(\mathcal{X},\mathcal{D})}(\mathcal{L}).
\]
 Hence, combining Step one and Step two above yields 
\[
-\frac{1}{2}\overline{\mathcal{K}}_{(\mathcal{X},\mathcal{D})}^{2}\geq\mathcal{M}_{(\P_{\Z}^{1},0)}(\overline{-\mathcal{K}_{\P_{\Z}^{1}}})=-\frac{1}{2}(\overline{-\mathcal{K}_{\P_{\Z}^{1}}})^{2}
\]
 and the equality case is deduced precisely as before.
\begin{rem}
When $(\P^{1},\Delta)$ is K-stable equality holds in the inequality
\ref{eq:ineq log Mab in pf optimal} iff $(\mathcal{X}',\mathcal{D}')=(\P^{1},\mathcal{D}^{0}).$
Indeed, by Lemma \ref{lem:alpha geq one half}, the K-stability of
$(\P^{1},\Delta)$ implies that $\alpha(\P^{1},\mathcal{D}^{0})>1/2.$
Hence, if equality holds in \ref{eq:ineq log Mab in pf optimal},
then formula \ref{eq:difference Mab} forces $E=0,$ showing that
$p$ is an isomorphism. We can then conclude precisely as in the beginning
of Section \ref{subsec:The-equality-case}.
\end{rem}

\section{Appendix}

In the proof of Lemma \ref{lem:transf of height} we used the following
result (applied to $X=\P_{\C}^{n}$).
\begin{lem}
\label{lem:append}Let $X$ be a Fano manifold and $V$ a holomorphic
vector field on $X.$ Denote by $G_{t}$ the flow of the real part
of $V$ on $X$ at time $t$ and by $(G_{t}^{V})^{*}\phi$ its action
on a given continuous metric $\phi$ on $-K_{X}$ with positive curvature
current. If $X$ admits a Kähler-Einstein metric, then 
\[
\frac{d}{dt}\mathcal{E}(G_{t}^{*}\phi,\psi_{0})=0
\]
 for any fixed metric $\psi_{0}$ on $-K_{X}.$
\end{lem}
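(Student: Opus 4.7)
My plan is to exploit two structural properties of the Monge-Amp\`ere energy $\mathcal{E}$: its cocycle relation $\mathcal{E}(\phi_1, \phi_3) = \mathcal{E}(\phi_1, \phi_2) + \mathcal{E}(\phi_2, \phi_3)$, together with the invariance $\mathcal{E}(G^*\phi_1, G^*\phi_2) = \mathcal{E}(\phi_1, \phi_2)$ under any biholomorphism $G$ of $X$. The latter is immediate from formula \ref{eq:def of primitive} because $G$ is holomorphic (so $dd^c$ commutes with $G^*$) and the integral is invariant under change of variables. Combining these two identities one finds
\[
\mathcal{E}(G_t^*\phi, \psi_0) - \mathcal{E}(\phi, \psi_0) \;=\; \mathcal{E}(G_t^*\psi_0, \psi_0),
\]
so the increment is independent of $\phi$. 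A second application of the same identities further shows that the right-hand side is also independent of $\psi_0$, depending only on $t$ and $V$. Hence it suffices to prove $\mathcal{E}(G_t^*\psi, \psi) \equiv 0$ in $t$ for a single well-chosen $\psi$.

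The natural choice is $\psi = \phi_{\mathrm{KE}}$, a volume-normalized K\"ahler-Einstein metric, which exists by hypothesis. Since $X$ is a smooth Fano manifold, $\phi_{\mathrm{KE}}$ is smooth and the family $t \mapsto G_t^*\phi_{\mathrm{KE}}$ is likewise smooth, so the standard variational formula obtained by differentiating \ref{eq:def of primitive} gives
\[
\tfrac{d}{dt}\Big|_{t=0}\mathcal{E}(G_t^*\phi_{\mathrm{KE}}, \phi_{\mathrm{KE}}) \;=\; \tfrac{1}{n!}\int_X \dot\phi \, (dd^c\phi_{\mathrm{KE}})^n,
\]
where $\dot\phi := \tfrac{d}{dt}|_{t=0}(G_t^*\phi_{\mathrm{KE}} - \phi_{\mathrm{KE}})$ is a globally well-defined function on $X$ (as the derivative of the log-ratio of two metrics on $-K_X$). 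The K\"ahler-Einstein equation, under volume normalization, reads $(dd^c\phi_{\mathrm{KE}})^n = c\,\mu_{\phi_{\mathrm{KE}}}$ for a positive constant $c$, reducing the task to showing $\int_X \dot\phi \, \mu_{\phi_{\mathrm{KE}}} = 0$.

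For this I invoke the canonical lift of $G_t$ to a bundle automorphism of $-K_X$ (automatic for biholomorphisms). Under this lift the measure associated to any metric transforms tautologically, $G_t^*\mu_\phi = \mu_{G_t^*\phi}$, so by change of variables $\int_X \mu_{G_t^*\phi} = \int_X \mu_\phi$ is constant in $t$. Differentiating at $t=0$ yields $\int_X \dot\phi\,\mu_\phi = 0$ for every continuous psh $\phi$, and in particular for $\phi = \phi_{\mathrm{KE}}$. This establishes vanishing of the derivative at $t = 0$. To promote this to all $t$, I observe that $G_t^*\phi_{\mathrm{KE}}$ is itself K\"ahler-Einstein (the equation $(dd^c\phi)^n = c\,\mu_\phi$ is invariant under biholomorphisms with canonical lift), so rerunning the argument with base point $G_t^*\phi_{\mathrm{KE}}$ shows $\tfrac{d}{dt}\mathcal{E}(G_t^*\phi_{\mathrm{KE}}, \phi_{\mathrm{KE}}) = 0$ at every $t$. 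Hence this function is constant with value $0$, which by the reduction of paragraph one gives the conclusion.

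The main technical point is the identification $G_t^*\mu_\phi = \mu_{G_t^*\phi}$, which underlies both the invariance $\mathcal{E}(G_t^*\phi_1, G_t^*\phi_2) = \mathcal{E}(\phi_1, \phi_2)$ and the identity $\int_X \dot\phi\,\mu_\phi = 0$; it is a consequence of the canonical transformation rule for densities on $-K_X$ under biholomorphisms, together with the local representation of $\mu_\phi$ recalled in Section \ref{subsec:Local-representations-of}. Regularity issues stemming from the assumption that $\phi$ is merely continuous play no role, since the whole argument is reduced via the cocycle to the case of the smooth metric $\phi_{\mathrm{KE}}$.
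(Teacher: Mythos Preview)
Your argument is correct, and it reaches the conclusion by a genuinely different route from the paper's. The paper passes through the Ding functional $\mathcal{D}(\phi)=-\tfrac{n!}{(-K_X)^n}\mathcal{E}(\phi)-\log\int_X\mu_\phi$: since $\log\int_X\mu_\phi$ is $G_t$-invariant, $\mathcal{E}$ and $\mathcal{D}$ have the same $t$-derivative; the derivative of $t\mapsto\mathcal{D}(G_t^*\phi)$ is the Futaki invariant of $V$ (so this function is affine, citing Tian); and the existence of a K\"ahler--Einstein metric makes $\mathcal{D}$ bounded below, forcing the affine function to be constant. Your approach instead uses the cocycle relation plus biholomorphism-invariance of $\mathcal{E}$ to reduce the whole question to a single metric, then evaluates the derivative directly at $\phi_{\mathrm{KE}}$ via the variational formula and the K\"ahler--Einstein equation $(dd^c\phi_{\mathrm{KE}})^n=c\,\mu_{\phi_{\mathrm{KE}}}$. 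In effect you are computing the Futaki invariant at a K\"ahler--Einstein metric and showing it vanishes there, rather than invoking its constancy as an established fact; this makes your proof more self-contained (no external reference for the affine behaviour), at the cost of a short explicit computation. One small inaccuracy in your final paragraph: the invariance $\mathcal{E}(G^*\phi_1,G^*\phi_2)=\mathcal{E}(\phi_1,\phi_2)$ does not rely on the identity $G^*\mu_\phi=\mu_{G^*\phi}$, since $\mu_\phi$ does not enter formula~\ref{eq:def of primitive}; it follows purely from change of variables for top-degree forms and the commutation of $dd^c$ with holomorphic pullback. The measure identity is only needed for $\int_X\dot\phi\,\mu_\phi=0$.
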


\begin{proof}
This is well-known and essentially goes back to \cite{fut}, but for
the convenience of the reader we provide a proof in the spirit of
the present paper and its precursor \cite{a-b}. Consider the \emph{Ding
functional} on the space of all continuous metrics on $-K_{X}$ with
positive curvature current, defined by 
\[
\mathcal{D}(\phi):=-\frac{n!}{(-K_{X})^{n}}\mathcal{E}(\phi)-\log\int_{X}\mu_{\phi},
\]
 where $\mu_{\phi}$ is the measure on $X$ corresponding to the metric
$\phi$ (see Section \ref{subsec:Log-Fano-varieties metrics}, with
$\Delta=0)$ and $\mathcal{E}(\phi)$ is a shorthand for the functional
$\mathcal{E}(\phi,\psi_{0})$ defined in formula \ref{eq:def of primitive}.
Since, $\mu_{G^{*}\phi}=G^{*}\mu_{\phi}$ for any biholomorphism $G$
of $X$ it follows that $\mathcal{E}(G_{t}^{*}\phi)$ and $\mathcal{D}(G_{t}{}^{*}\phi)$
have the same derivative. Moreover, in general, the function $t\mapsto\mathcal{D}(G_{t}^{*}\phi)$
is linear. Indeed, its derivative is the Futaki invariant of $V$
(see the claim on page 73 in \cite{ti}, where $\mathcal{D}$ is denoted
by $F_{\omega}$ ). Hence, all that remains is to verify that $\mathcal{D}$
is bounded from below (since then $t\mapsto\mathcal{D}(G_{t}^{*}\phi)$
must be constant). But this follows from the existence of a Kähler-Einstein
metric, since such a metric minimizes $\mathcal{D},$ as recalled
in \cite[Section 2.3]{a-b}. 
\end{proof}

\end{document}